\theoremstyle{plain}
\newtheorem{theorem}{Theorem}[section]
\newtheorem{lemma}[theorem]{Lemma}
\newtheorem{corollary}[theorem]{Corollary}
\newtheorem{proposition}[theorem]{Proposition}
\theoremstyle{definition}
\newtheorem{example}[theorem]{Example}
\newtheorem{question}[theorem]{Question}
\newtheorem{definition}[theorem]{Definition}
\newtheorem{defn/propn}[theorem]{Definition/Proposition}
\newtheorem{remark}[theorem]{Remark}
\newenvironment{customthm}[1]
  {\innercustomthm}
  {\endinnercustomthm}
\newcommand{\C} {\mathbb C}
\newcommand{\N}{\mathbb N}
\newcommand{\R}{\mathbb R}
\newcommand{\T}{\mathbb T}
\newcommand{\Z}{\mathbb Z}
\newcommand{\Balg} {\mathcal{B}}
\newcommand{\Alg} {\mathcal{A}}
\newcommand{\ip}[2]{\langle #1,#2 \rangle}
\newcommand{\im}{\textup{im}}
\newcommand{\Hil}{\mathcal{H}}
\newcommand{\Dirac}{\mathcal{D}}
\newcommand{\semidir}{{\rtimes}}
\newcommand{\subs}{\subseteq}
\title{Spectral Metric Spaces on Extensions of C*-Algebras}
\author{Andrew Hawkins}
\address{Kendal College,   Milnthorpe Road, Kendal, Cumbria LA9 5AY, England}
\email{ah3@Kendal.ac.uk }
\author{Joachim Zacharias}
\address{School of Mathematics and Statistics, University of Glasgow, 15 University Gardens, Glasgow, G12 8QW,
Scotland}
\email{Joachim.Zacharias@glasgow.ac.uk  }
\thanks{\emph{Supported by:}  EPSRC Grant EP/I019227/1-2}
\keywords{Spectral triple, extension, $C^*$-algebra, Lip-norm, quantum sphere}
\subjclass[2000]{Primary: 46L05; Secondary: 46L87, 58B34}
\begin{document} \maketitle

\begin{abstract} \noindent We construct spectral triples on C*-algebraic extensions of unital C*-algebras by stable ideals satisfying a certain Toeplitz type property using given spectral triples on the quotient and ideal. Our
construction behaves well with respect to summability and produces new spectral quantum metric spaces out of given ones. Using our construction we find new spectral triples on the quantum 2- and  3-spheres giving a new perspective on these algebras in noncommutative geometry.
\end{abstract}

\section{Introduction}\label{intro}

\subsection{Background.}

Spectral triples, a central concept  of noncommutative geometry, provide an analytical language for geometric objects. A prototype is given by the triple $(C^1(\mathcal{M}), L^2(\mathcal{M},\mathcal{S}), \Dirac)$ which is a spectral triple on the algebra $C(\mathcal{M})$ of continuous functions on $\mathcal{M}$, where $\mathcal{M}$ is  a compact Riemannian manifold equipped with a spin$^C$ (or spin) structure, $C^1(\mathcal{M})$ a dense ``smooth" subalgebra of  $C(\mathcal{M})$ and $\Dirac$ is the corresponding Dirac operator acting on $L^2(\mathcal{M},\mathcal{S})$.

Connes \cite{Con1}, \cite{Con2} introduced spectral triples as a potential means of describing the homology and index theoretic aspects in the more general language of (locally) compact topological spaces, as well as to develop a theory of cyclic cohomology mimicking the de-Rham cohomology theory of manifolds. Further, Connes shows that geometric information about a Riemannian manifold $\mathcal{M}$, such as the geodesic distance and dimension, can all be recovered from the Dirac triple on $C(\mathcal{M})$.

Spectral triples are motivated by Kasparov theory and can be regarded as  ``Dirac-type" or elliptic operators on general C$^*$-algebras (usually assumed separable). In particular a spectral triple defines a $K$-homology class. Spectral triples with good properties can be used to encode geometric information on a C$^*$-algebra. Besides the link between summability and dimension which is well understood in the commutative case, we mention two examples of current areas of research.

The first is  Connes' reconstruction programme, the aim of which is to find conditions or axioms under which a spectral triple on a  commutative C$^*$-algebra can provide the spectrum of the algebra with the structure  of a manifold. Several reconstruction theorems have been suggested in what has become a very prominent area of research (see for example \cite{LRV}, \cite{Con3}). Besides the noncommutative tori, there do not seem to be many examples of noncommutative C$^*$-algebras at present for which this sort of analysis can be extended to.

The second one is the idea to regard spectral triples  as noncommutative (quantum) metric spaces, beginning with Connes' observation \cite{Con2} that the Dirac triple on a Riemannian spin$^C$ manifold $\mathcal{M}$ recovers the geodesic distance between two points on the manifold. In fact Connes' expression  for the geodesic distance extends immediately to a metric on the space of probability measures on $\mathcal{M}$. In more recent and general language, a spectral triple on a C$^*$-algebra determines a \textit{Lipschitz seminorm} on the self-adjoint part of the smooth subalgebra, an analogue of the classical notion of Lipschitz continuous functions. In e.g. \cite{Ri1}, \cite{Ri2} and \cite{Ri3} Rieffel studies Lipschitz seminorms of this kind extensively. Under mild conditions such a seminorm defines a metric on the state space of the algebra by a formula analogous to the manifold case.  However, in general, Lipschitz seminorms and corresponding metrics may be quite arbitrary.  A  natural condition one would expect this metric to satisfy is that it induces the weak-$*$-topology on the state space and Rieffel makes this the defining condition of his notion of a \textit{quantum metric space}. Rieffel found a very useful characterisation of this metric condition for unital C$^*$-algebras  (\cite{Ri1}, cf. Prop.\ref{metric condition} below for the statement). We will refer to this condition as Rieffel's metric condition.  Latr{\'e}moli{\`e}re later extended much of this work to non-unital C$^*$-algebras in \cite{Lat1} and \cite{Lat2}. A C$^*$-algebra equipped with a spectral triple satisfying this metric condition is sometimes called a \textit{spectral metric space}.

Despite the longevity of spectral triples as a subject of study, general methods of constructing spectral triples on C$^*$-algebras are not well understood, much less still those satisfying the metric condition. There have been successful constructions of so-called spectral metric
spaces on certain noncommutative C*-algebras, such as approximately finite dimensional algebras (\cite{CI2}), group C*-algebras of discrete hyperbolic groups (\cite{OR}) and algebras arising as $q$-deformations of the function algebras of simply connected simple compact Lie groups (\cite{NT}).

Building on previous authors' works, we are particularly interested in `building block' constructions i.e. constructing new spectral triples from old ones, which is also in the sprit of permanence properties. This point of view has been used by various authors to attempt to construct spectral triples on crossed products of C*-algebras by certain discrete groups (\cite{BMR}, \cite{HSWZ}). More specifically, the authors of those two references study C$^*$-dynamical systems $(A,G,\alpha)$ in which the algebra $A$ is equipped with the structure of a spectral triple with good metric properties and consider under what conditions it is possible to write down a spectral triple on $A \semidir_{r, \alpha} G$ using a natural implementation of the external product in Kasparov theory. It turns out that a necessary and sufficient condition is the requirement that the action of $G$ essentially implements an isometric action on the underlying spectral metric space. This is satisfied for a variety of group actions and, via this construction, the authors in collaboration with A. Skalski and S. White (\cite{HSWZ}) were able to write down spectral triples with good metric properties on both the irrational rotation algebras and the Bunce-Deddens algebras and some of their generalisations.
     
Spectral triples define Baaj-Julg cycles, the unbounded analogue of a Kasparov bimodule in KK-theory (\cite{BJ}). This perspective is increasingly being examined by various authors to write down spectral triples on C$^*$-algebras by means of an unbounded version of Kasparov's internal product, which is defined for C$^*$-algebras $A$, $D$, $B$ and $p, q \in \{0,1\}$ as a map $\otimes_B: KK^p(A,D) \times KK^q(D,B) \to KK^{p+q}(A,B)$. There are a couple of important recent developments in this area: Gabriel and Grensing (\cite{GG}) consider the possibility of writing down spectral triples on certain Cuntz-Pimsner  algebras, generalising the setting of ordinary crossed products by $\Z$ but with the same property that the triples they construct represent the image of a given triple under the boundary map in the resulting six-term exact sequence. They succeed in implementing these techniques to construct a variety of spectral triples on certain quantum Heisenberg manifolds. Goffeng and Mesland (\cite{GM}) investigate how the Kasparov product can, under suggested modifications, be used to write down spectral triples on Cuntz-Krieger algebras, beginning with the spectral triple on the underlying subshift space. It is anticipated therefore that there will be a considerable interest in the interplay between spectral triples and the Kasparov product in the near future.

In this paper we construct spectral triples on extensions of C*-algebras out of given ones on the ideal and the quotient algebra. We are, however, primarily concerned with those which satisfy Rieffel's metric condition, thus implementing the structure of a quantum metric space on the extension, beginning with related structures on both the quotient and ideal. Techniques in Kasparov theory will be important to us too, but certain technical difficulties will prevent us from being able to give a full description of the resulting triples in terms of their representatives in K-homology. We remark that the ideas in this paper are closely linked to those of Christensen and Ivan (\cite{CI1}) and are to some extent a generalisation of their results.

\subsection{Outline of the paper.} \label{outline}
We assume throughout the paper that all C$^*$-algebras and Hilbert spaces are separable.
Given a C$^*$-algebra $E$ and an essential ideal $I \subs E$, and given spectral triples on both $I$ and $E/I$, is there any way of constructing a spectral triple on $E$ out of the given spectral triples? In this paper we will be looking at the situation in which the quotient is a unital C$^*$-algebra $A$ and the ideal is the tensor product of a unital C$^*$-algebra by the algebra of compact operators, that is, we consider extensions of the form,
\begin{eqnarray}
\xymatrix{ 0 \ar[r] & \mathcal{K} \otimes B \ar[r]^{\iota} & E \ar[r]^{\sigma} & A \ar[r] & 0}.
\end{eqnarray}
This is a generalisation of the situation considered by Christensen and Ivan \cite{CI1}, who looked at short exact sequences of
the form,
\begin{eqnarray}
\xymatrix{ 0 \ar[r] & \mathcal{K} \ar[r]^{\iota} & E \ar[r]^{\sigma} & A \ar[r] & 0}.
\label{ext-k}
\end{eqnarray}

They exploited the fact that a certain class of C$^*$-extensions by compacts (those which are semisplit) can be spatially represented over a Hilbert space: as outlined in Section 2.7 of \cite{HR}, we can regard $E$ as a subalgebra of the bounded operators on an infinite dimensional Hilbert space $H$ generated by compacts on $PH$ and the operators $\{ P\pi_A(a)P \in B(H): a \in A \}$, where $\pi_A: A \to B(H)$ is a faithful representation and $P \in B(H)$ is an orthogonal projection with infinite dimensional range. The algebra acts degenerately, only on the subspace $PH$.

There is a certain generalisation of this picture for semisplit extensions by general stable ideals of the form (\ref{extension}) which is due to Kasparov (\cite{Kas3}, see also \cite{Bla}). For such extensions, $E$ can always be regarded as a subalgebra of $\mathcal{L}_B(\ell_2(B)) = \mathcal{L}_B$, the C$^*$-algebra of bounded $B$-linear and adjointable operators on the Hilbert module $\ell_2(B)$. In fact, using semisplitness, there is a representation $\pi : A \to \mathcal{L}_B(\ell_2(B) \oplus \ell_2(B)) \cong \mathcal{L}_B(\ell_2(B)) $ and a projection $P \in \mathcal{L}_B$ such that  $E$ is generated by $P\pi (A) P $ and $P (\mathcal{K} \otimes B) P= P (\mathcal{K}_B)P$ (cf. Section \ref{ext} for more details). However, to construct spectral triples on $E$ we need a representation on a Hilbert space, not  Hilbert module. Our given spectral triples come with concrete representations $\pi_A : A \to B(H_A)$ and $\pi_B : B \to B(H_B)$ on Hilbert spaces. It seems reasonable to study those extensions which act naturally on the tensor product $H_A \otimes H_B$, possibly degenerately i.e. only on a subspace of this tensor product. More precisely, we consider representations of the form
$$
\pi: E \to B(H_A \otimes H_B), \;\;\; \;\; \pi(\mathcal{K} \otimes B) = \mathcal{K}(H_0)\otimes \pi_B(B)
$$
where $H_0$ is an infinite dimensional subspace of $H$, and $\pi (k \otimes b) = \phi (k) \otimes \pi_B(b)$ with $ \phi : \mathcal{K} \to \mathcal{K} (H_0)$ an isomorphism. Not all extensions can be brought into this form. In Section \ref{ext} we show that it is possible if the Busby invariant satisfies a certain factorisation property. In order to describe it in somewhat more detail recall that a short exact sequence of C$^*$-algebras,
\begin{eqnarray*}
\xymatrix{ 0 \ar[r] & \mathcal{K} \otimes B \ar[r]^{\iota} & E \ar[r]^{\sigma} & A \ar[r] & 0}
\end{eqnarray*}
is characterised by a $^*$-homomorphism $\psi: A \to \mathcal{Q}_B$, the \textit{Busby invariant}, where
$\mathcal{Q}_B := \mathcal{L}_B / \mathcal{K}_B$ is sometimes called the generalised Calkin algebra with respect to the C$^*$-algebra $B$. Since $\mathcal{L}_B \cong \mathcal{M} (\mathcal{K} \otimes B)$ there is an embedding of the ordinary Calkin algebra $\mathcal{Q}= \mathcal{M}(\mathcal{K})/\mathcal{K}$ into $\mathcal{Q}_B$ and the condition which characterises the extensions we consider is that there exists a semisplit extension of $A$ by $\mathcal{K}$ of the type (\ref{ext-k}) with Busby invariant $\psi_0: A \to \mathcal{Q}$ such that $\psi$ factors through $\psi_0$ and the natural inclusion of $\mathcal{Q}$ into  $\mathcal{Q}_B= \mathcal{M}(\mathcal{K} \otimes B)/\mathcal{K\otimes B}$. In KK-theoretic language, we need the class of $\psi$  in $KK^1(A,B)= \text{Ext}(A,B)^{-1}$ to factor into the class of an extension
$\psi_0$ in $K^1(A)=\text{Ext}^{-1}(A)$ and the $K_0(B)$-class of $1_B \in B$, i.e.
$$
[\psi] = [\psi_0] \otimes [1_B].
$$
In this situation we can view the algebra $E$ as a concrete subalgebra of $B(H_A \otimes H_B)$ generated by
elements of the form $PaP \otimes 1_B$ and $k \otimes b$, where $P \in B(H_A)$ is an orthogonal infinite dimensional projection, $a \in A$, $b \in B$ and $k$ is a compact operator in $PH$ (cf. Corollary \ref{tensorial}). Throughout the paper we will assume that our extension is of this form. To avoid technicalities we will  assume that $PaP \cap \mathcal{K} = \{0\}$ which is true for essential extensions and can always be arranged by replacing $\pi_A$ by an infinite ampliation $\pi_A^{\infty}$.

Starting from a pair of spectral triples $(\Alg, H_A, \Dirac_A)$ on $A$ and $(\Balg, H_B, \Dirac_B)$ on $B$ (cf. Section \ref{spectriples} for notation), Kasparov theory can be used to write down a spectral triple on $A \otimes B$ whose representative in K-homology is the \textit{external Kasparov product} of the representatives of $(\Alg, H_A, \Dirac_A)$ and $(\Balg, H_B, \Dirac_B)$. When the spectral triple on $A$ is odd and the spectral triple on $B$ is even, i.e. there is a direct sum representation $\pi_B^+ \oplus \pi_B^-$ and
$\Dirac_B$, acting on $H_B \otimes \C^2$, decomposes as the matrix,
$$
\begin{bmatrix} 0 & \Dirac_B^+ \\ \Dirac_B^- & 0 \end{bmatrix},
$$
then the spectral triple can be defined on the spatial tensor product $A \otimes B$ acting on the Hilbert space $H_A
\otimes H_B \otimes \C^2$ via the representation $(\pi_A \otimes \pi_B^+) \oplus (\pi_A \otimes \pi_B^-)$ with the
Dirac operator,
$$
\begin{bmatrix} \Dirac_A \otimes 1 & 1 \otimes \Dirac_B^+ \\ 1 \otimes \Dirac_B^- & -\Dirac_A \otimes 1
\end{bmatrix},
$$
(which can be interpreted as sum two graded tensor products), whereas the product of two ungraded triples is represented by the matrix
$$
\begin{bmatrix} 0 & \Dirac_A \otimes 1 - i \otimes \Dirac_B\\ 
 \Dirac_A \otimes 1 + i \otimes \Dirac_B & 0
\end{bmatrix},
$$
acting on $H_A \otimes \C^2$ (see for example \cite{Con1} pp. 433-434).
We mention these formulae since they serve as an inspiration for the Dirac operator we are going to write down for the extension. In fact our operator will be a combination of these two formulae which makes it difficult to interpret our construction in K-homological terms. 


Returning to our set-up, our assumptions imply that we can, omitting representations, write down an isomorphism,
\begin{eqnarray*}
E \cong \mathcal{K}(P H_A) \otimes B + PAP \otimes \C I_B,
\end{eqnarray*}
where $[P, a]$ is a compact operator on $H_A$.  
$E$ can be regarded as a concrete subalgebra of $B(H_A \otimes H_B)$ acting degenerately (effectively only on $PH_A \otimes H_B$). The corresponding representation is denoted by $\pi$.  There is another representation  $\pi_{\sigma}: E \to B(H_A \otimes H_B)$ given as the composition of the quotient map $\sigma: E \to A$ and the natural representation $\pi_A \otimes 1$ of $A$ on $B(H_A \otimes H_B)$. $\pi_{\sigma}$ is non-degenerate but not faithful. The information coming from both representations is essential to writing down a
spectral triple on $E$ which encodes the metric behaviour of both the ideal and quotient parts of the extension.
We will use this information, the presence of Dirac operators $\Dirac_A$ on $H_A$ and $\Dirac_B$ on $H_B$ together with the
aforementioned ideas in Kasparov theory to build spectral triples on $E$. The representation of this triple will be a suitable combination of the two representations of $E$.
       
In order to build a spectral triple we need the further requirement  that $P$ commutes with $\Dirac_A$
which then decomposes into the direct sum $\Dirac_A = \Dirac_A^p \oplus \Dirac_A^q$, where $\Dirac_A^p = P
\Dirac_A P$, $\Dirac_A^q = Q \Dirac_A Q$ and $Q = 1-P$. Next we require
\begin{eqnarray*}
[P, \pi_A(a)] \in \mathcal{C}(H_A) \;\;\; \forall a \in \Alg,
\end{eqnarray*}
where $\mathcal{C}(H_A)$ is the dense $^*$-subalgebra of elements $x \in \mathcal{K}(H_A)$ such that $x \textup{dom}(\Dirac_A) \subseteq \textup{dom}(\Dirac_A)$ and both $x \Dirac_A$ and $\Dirac_A x$ extend to bounded operators ($P$-regularity cf. Defs.\ref{D-diff}, \ref{Toeplitz-type}). One may think of  $\mathcal{C}(H_A)$ as the dense $^*$-subalgebra of `differentiable compacts', hence the notation used. 

We summarise our assumptions on the extension and spectral triples in the following definition which is consistent with the article \cite{CI1}.
     
\begin{definition}\label{Toeplitz type}
Let $\pi_A : A \to B(H_A)$ and $\pi_B : B \to B(H_B)$ be faithful representations, where $A, B$ are separable unital  C$^*$-algebras and $H_A ,H_B$ separable Hilbert spaces. 
The extension 
\begin{eqnarray}
\xymatrix{ 0 \ar[r] & \mathcal{K} \otimes B \ar[r]^{\iota} & E \ar[r]^{\sigma} & A \ar[r] & 0}
\label{extension}
\end{eqnarray}
is said to be of \textit{Toeplitz type} if there exists an infinite dimensional  projection $P \in B(H_A)$ such that $$[P,\pi_A(a)] \in \mathcal{K}(H_A),$$  $$E \cong   \mathcal{K}(P H_A) \otimes \pi_B(B) + P\pi_A(A)P \otimes \C I_B$$ and $$ \mathcal{K}(P H_A) \otimes \pi_B(B) \cap  P\pi_A(A)P \otimes \C I_B= \{0\}.$$ $(\pi_A, \pi_B , P)$ is then referred to as a \textit{Toeplitz triple} for the extension. 

If moreover $(\Alg, H_A ,\Dirac_A)$ is a spectral triple  such that $\Dirac_A$ and $P$ commute  and $[P, \pi_A(a)] \in \mathcal{C}(H_A)$ for all $a \in \Alg$, then the quadruple $(\Alg, H_A, \Dirac_A, P)$ (or just $P$)  is said to be  of \textit{Toeplitz type}.  

A Toeplitz type quadruple is said to be $P$-\textit{injective} if $\ker(\Dirac_A^p \cap PH_A) = \{0\}$. 
\end{definition}

When $P$ coincides with the orthogonal projection into the closed span of the positive eigenspace for $\Dirac_A$, then the smoothness assumption turns out to be equivalent to saying that not only $[\Dirac_A, \pi_A(a)]$ but also $[|\Dirac_A|, \pi_A(a)]$ is a bounded operator for each $a \in \Alg$, which is related to the concept of regularity for spectral triples in Riemannian geometry due to Connes and Moscovici (\cite{CMh2}).

We state here the two main results of our paper  asserting the existence of spectral triples with good summability properties on Toeplitz-type extensions under the assumption that $(\Alg, H_A, \Dirac_A, P)$ is of Toeplitz type 
 (Theorem \ref{ConstExt}) and that Rieffel's metric condition is preserved under the mild extra assumption that $\Dirac_A$ is $P$-injective (Theorem \ref{ConstExt2}). 

Before we can do so we need to introduce further notation. Let $\Pi_1, \Pi_2 : E \to B(H_A \otimes H_B \otimes \C^2)$ be the representations given by
\begin{eqnarray}
\Pi_1 = \pi_{\sigma} \oplus \pi_{\sigma} \textup{ and } \Pi_2 = \pi \oplus \pi_{\sigma}
\end{eqnarray}
and consider operators $\Dirac_{1}, \Dirac_{2}, \Dirac_{3}$ on $H_A \otimes H_B \otimes \C^2$ given by
\begin{eqnarray}
\Dirac_{1} = 
\begin{bmatrix} \Dirac_A \otimes 1  & 1 \otimes \Dirac_B  \\ 
1 \otimes \Dirac_B  & -\Dirac_A \otimes 1 
\end{bmatrix},
\end{eqnarray}
\begin{eqnarray}
\Dirac_{2} = 
\begin{bmatrix} \Dirac_A^q \otimes 1 & \Dirac_A^p \otimes 1 \\ 
\Dirac_A^p \otimes 1 & -\Dirac_A^q \otimes 1
\end{bmatrix}
= 
\begin{bmatrix} \Dirac_A^q  & \Dirac_A^p  \\ 
\Dirac_A^p  & -\Dirac_A^q 
\end{bmatrix}
\otimes I
\end{eqnarray}
and 
\begin{eqnarray}
\Dirac_{3} = 
\begin{bmatrix}  1 \otimes \Dirac_B & 0 \\ 
0 & 1 \otimes \Dirac_B
\end{bmatrix}
= 
I \otimes
\begin{bmatrix} \Dirac_B  & 0  \\ 
0  & \Dirac_B
\end{bmatrix}
\end{eqnarray}

\begin{customthm}{\ref{ConstExt}}
Let $A$ and $B$ be unital C$^*$-algebras and suppose that $E$ arises as the short exact sequence
(\ref{extension}) and that there exist spectral triples $(\Alg,H_A,\Dirac_A)$ on $A$ and
$(\Balg,H_B,\Dirac_B)$ on $B$, represented via $\pi_A$ and $\pi_B$ respectively, and an orthogonal projection $P \in B(H_A)$ such that $(\Alg, H_A, \Dirac_A, P)$ is of Toeplitz type. Let 
\begin{eqnarray*}
\Pi = \Pi_1\oplus \Pi_2 \oplus \Pi_{2}, \;\;H = (H_A \otimes
H_B \otimes \C^2)^3, \textup{ and}
\end{eqnarray*}
\begin{eqnarray*}
\Dirac = \begin{bmatrix} \Dirac_{1} & 0 & 0 \\[1ex] 
0 & 0 &   \Dirac_2 - i \Dirac_{3}\\[1ex]  
0  & \Dirac_2  +
i \Dirac_{3} &0 \end{bmatrix}.
\end{eqnarray*}
Then $(\mathcal{E}, H, \Dirac)$, represented
via $\Pi$, defines a spectral triple on $E$. 
Moreover, the spectral dimension of this spectral triple is computed by
the identity
\begin{eqnarray*}
s_0(\mathcal{E}, H, \Dirac) = s_0(\Alg,H_A,\Dirac_A) + s_0(\Balg,H_B,\Dirac_B).
\end{eqnarray*}
\end{customthm}

\vspace{5mm}

The Dirac operator $\Dirac$ of the spectral triple defines a Lipschitz seminorm $L_{\Dirac}$ which in turn defines in  good cases a metric on the state space of the C$^*$-algebra. We address the question of whether our spectral triples satisfy Rieffel's metric condition, which is a necessary and sufficient condition for the metric on the state space to induce the weak-$*$-topology (cf. Prop.\ref{metric condition} and the definition thereafter). In this case the spectral triple together with the Lipschitz seminorm is called a spectral metric space. We show that under our natural assumptions this is always the case.

\begin{customthm}{\ref{ConstExt2}}
Let $A$ and $B$ be unital C$^*$-algebras and suppose $E$ arises as the short exact sequence
(\ref{extension}). Suppose further that there exists spectral triples $(\Alg,H_A,\Dirac_A)$ on $A$ and
$(\Balg,H_B,\Dirac_B)$ on $B$, represented via $\pi_A$ and $\pi_B$ respectively, and an orthogonal projection $P \in B(H_A)$ such that 
$(\Alg, H_A, \Dirac_A, P)$ is of Toeplitz type and $P$-injective. If the spectral triples $(\Alg,H_A,\Dirac_A)$ and
$(\Balg,H_B,\Dirac_B)$ satisfy Rieffel's metric condition then so does the spectral triple $(\mathcal{E}, H, \Dirac)$ so that $(\mathcal{E}, L_{\Dirac})$ is a spectral metric space.
\end{customthm}

We go on to show that there are numerous examples of C$^*$-algebra extensions which can be given the structure of
a spectral metric space. Our main focus is the single-parameter noncommutative (quantum) spheres $C(S_q^n)$ for $n
\geq 2$, which can be iteratively defined as C$^*$-algebra extensions of smaller noncommutative spheres. We shall
specifically study the cases $n = 2$ (the equatorial Podle{\'s} spheres) and $n = 3$ (the quantum $\textup{SU}_q(2)$ group) and merely comment on how these two examples can be used to study their higher dimensional counterparts.

The noncommutative spheres have garnered a lot of attention in the literature as examples of noncommutative
manifolds and many spectral triples have been suggested (e.g. \cite{CPP1}, \cite{CPP3},  \cite{DDLW}), though most of these from a very different perspective to ours, for example by looking at the representation theory of the ordinary $\textup{SU}(2)$ group and focusing on those triples which behave equivariantly with respect to the group co-action. 

We remark that Chakraborty and Pal also considered the question of finding Lip-metrics starting from given ones on the ideal and quotient for
extensions of the same type as ours \cite{CP1}. However, our goal is to construct spectral triples, rather than compact quantum metric spaces. Our spectral triples give rise to Lip-metrics with properties similar to theirs but we have existence results for Dirac operators and our constructions seem to be fairly different.

\vspace{3mm}

\textbf{Acknowledgement:} We are very much indebt to the anonymous referees and would like to thank them for many very helpful comments which improved the paper considerably.

\section{A review of spectral triples and quantum metric spaces}\label{review}

\subsection{Spectral Triples.}\label{spectriples}

We begin with a short exposition of spectral triples. 
For more information and context, we recommend the articles \cite{Ren}, \cite{Var} and \cite{LRV} which provide an excellent exposition of the theory and motivation behind spectral triples. We remind the reader that $C^*$-algebras and Hilbert spaces are assumed to be separable throughout this article.

\begin{definition} \label{deftriple} Let $A$ be a separable C$^*$-algebra. A spectral triple $(\Alg,\Hil,\Dirac)$
on $A$ is given by a faithful $^*$-representation $\pi: A
\mapsto B(\Hil)$ on a Hilbert space $\Hil$, a dense $^*$-subalgebra $\Alg \subseteq A$ and a linear densely defined
unbounded self-adjoint operator $\Dirac$ on
$\Hil$ such that
\begin{enumerate}
\item $\pi(\Alg) \textup{dom} (\Dirac) \subseteq \textup{dom} (\Dirac)$ and $[\Dirac, \pi(a)]: \textup{dom}(\Dirac)
    \to \Hil$ extends to a bounded operator for each $a \in \Alg$ and
\item $\pi(a)(I + \Dirac^2)^{-1}$ is a compact operator for each $a \in A$.
\end{enumerate}
\end{definition}
Unlike the majority of definitions provided in the literature, we do not make the assumption that the C$^*$-algebra $A$ is unital, or indeed that the representation over $\Hil$ is nondegenerate. Using faithfulness on the other hand we may identify $A$ with $\pi(A)$ and therefore omit the representation from notation, in particular writing $a\xi$ for $\pi(a)\xi$.

For a spectral triple on $A$, it is sometimes convenient to study the \textit{maximal Lipschitz subalgebra},
$C^1(A)$ which comprises those elements $a \in A$ such that $\pi(a) (\textup{dom}(\Dirac)) = a( \textup{dom}(\Dirac)) \subseteq \textup{dom}(\Dirac)$, the operator $[\Dirac, \pi(a)]: \textup{dom} (\Dirac) \to \Hil$ is closable and $\delta_{\Dirac}(a) := \textup{cl}[\Dirac, \pi(a)]$ is a bounded operator in $B(\Hil)$. It is an analogue of the algebra of Lipschitz continuous functions on a Riemannian spin$^C$ manifold. 
It is not immediately obvious, as one might think, that $C^1(A)$ is a $^*$-algebra. This  follows
from the fact that if any two elements $a$ and
$b$ leave the domain of $\Dirac$ invariant then so does $ab$ and the operator $[\Dirac,ab] = [\Dirac,a]b +
a[\Dirac,b]$, defined on $\textup{dom}(\Dirac)$, extends
to a bounded operator in $B(\Hil)$. It is less clear that  $C^1(A)$ is closed under involution. We are grateful to  Christensen for pointing out the following way to show this. In \cite{Chr}, he shows that the above
condition can be replaced by requiring the sesquilinear form $S([\Dirac,a])$, defined on $\textup{dom}(\Dirac) \times \textup{dom}(\Dirac)$ as
\begin{eqnarray*} S([\Dirac,a])(\xi,\eta) :=
\ip{a\xi}{\Dirac \eta} - \ip{a\Dirac\xi}{\eta}, \;\xi,\eta \in \textup{dom}(\Dirac),
\end{eqnarray*}
to be defined and bounded. The equality $S([\Dirac,a^*])(\xi,\eta) = -S([\Dirac,a](\eta,\xi))^*$ ensures that
$C^1(A)$ is closed under involution. It is well known that $C^1(A)$ becomes an operator algebra when equipped with
the norm $\|a\|_1 := \|\pi(a)\| + \|[\Dirac,\pi(a)]\|$ and viewed as a concrete subalgebra of the bounded
operators on the first Sobolev space, $\Hil_1 := \textup{dom}(\Dirac)$, of $\Hil$, the latter  being a Hilbert space with
respect to the inner product $\ip{\eta_1}{\eta_2}_1 := \ip{\eta_1}{\eta_2} + \ip{\Dirac \eta_1}{\Dirac \eta_2}$.
Depending on the context, it can be useful to think of $C^1(A)$ as either a dense $^*$-subalgebra of $A$ or as a
Banach algebra in its own right.

Recall that a spectral triple $(\Alg, \Hil, \Dirac)$ on a \textit{unital C$^*$-algebra} is called
\textit{$p$-summable}, (sometimes $(p,\infty)$-summability), where $p \in (0,\infty)$, if 
$(I + \Dirac^2)^{-p/2} \in B(\Hil)$ lies in the Dixmier class which is strictly larger than the trace class. 
\begin{definition}
The \textit{spectral dimension} of $(\Alg,\Hil,\Dirac)$, defined on a unital C$^*$-algebra $A$, is
given by
\begin{eqnarray*}
s_0(\Alg,\Hil,\Dirac) = \inf\{ p \in (0,\infty) : \; (\Alg,\Hil,\Dirac) \textup{ is } p-\textup{summable} \} 
\end{eqnarray*}
It can be shown that 
\begin{eqnarray*}
s_0(\Alg,\Hil,\Dirac) =\inf\{ p \in (0,\infty): \;\textup{Tr}(I + \Dirac^2)^{-p/2} < \infty\}.
\end{eqnarray*}
Here $\textup{Tr}$ is the usual unbounded trace on $B(H)$. 
\end{definition}
We will often write $s_0(\Dirac)$ instead of $s_0(\Alg,\Hil,\Dirac)$ and employ this notation also for the summability of an unbounded essentially self-adjoint operator. 

 Summability can also be defined for spectral triples of non-unital C$^*$-algebras, as advocated by Rennie in \cite{Ren1}.

Because of the relationship between spectral triples and Fredholm modules in K-homology, spectral triples are
often distinguished into \textit{odd} and
\textit{even} varieties:
\begin{definition} A spectral triple on $A$ is called \textit{graded} or \textit{even} if there exists an operator $\gamma \in B(\Hil)$ such that
$\gamma^2 = \textup{id}$, $\gamma \pi(a) = \pi(a) \gamma$ for each $a \in A$ and $\gamma \Dirac = -
\Dirac \gamma$. Otherwise it will be called
\textit{ungraded} or odd.
\end{definition}
Stated in a different way, an even spectral triple is one which can be formally represented via a direct sum representation over an orthogonal direct sum of Hilbert spaces $\Hil = \Hil^+ \oplus \Hil^-$ over which $\pi$ and
$\Dirac$ decompose as
\begin{eqnarray*}
\pi = \begin{bmatrix} \pi_0^+ & 0 \\ 0 & \pi_0^- \end{bmatrix},  
\;\;\;\;\;
\Dirac = 
\begin{bmatrix} 0 & \Dirac^- \\ 
\Dirac^+ & 0 \end{bmatrix},
\;\;\;\;\; 
\gamma = 
\begin{bmatrix} 1 & 0 \\ 
0 & -1 \end{bmatrix}.
\end{eqnarray*}

\subsection{Compact quantum metric spaces.}
One of the most interesting aspects of spectral triples in differential geometry is the possibility to recover the metric information of the manifold from the spectral triple.  Connes \cite{Con2} generalises this observation by showing that a spectral triple $(\Alg, \Hil,\Dirac)$ on a
C$^*$-algebra $A$ defines an \textit{extended metric} (i.e. allowing the metric to take the value $\infty$) 
$d_C: S(A) \times S(A) \to [0,\infty]$ on
the state space $S(A)$ of $A$, by the formula
\begin{eqnarray*}
d_C(\omega_1, \omega_2) := \sup\{|\omega_1(a) - \omega_2(a)|: \;a = a^* \in \Alg,
\;\|[\Dirac,\pi(a)]\| \leq 1\}
\end{eqnarray*}
Connes' metric $d_C$ in general depends on the algebra $\Alg$, so it is often better to write $d_{\Alg}$ to
stress
this dependence. The motivating example is prescribed by the Dirac triple on a connected spin$^c$ manifold
$\mathcal{M}$ defined on the dense subalgebra of
``C$^{\infty}$-functions" for which $\|[\Dirac, f]\| = \|df\|$. The restriction of Connes' metric to the point
evaluation measures $d_C(p_x, p_y)$ then coincides
with the geodesic metric $d_{\gamma}(x,y)$ along $\mathcal{M}$.

In \cite{Ri1}, \cite{Ri2} and \cite{Ri3}, Rieffel considered the more general setting of Lipschitz seminorms, which can be viewed as a generalisation of
metric spaces, or Lipschitz functions, to order-unit
spaces. The theory is based on the observation of Kantorovich and Rubinstein, who demonstrated that a metric on a
compact topological space can be extended
naturally to the set of probability measures on that space. Recent work by Latr{\'e}moli{\`e}re in \cite{Lat1} and \cite{Lat2} has extended much
of this work to the setting of non-unital
C$^*$-algebras.

In the context of this paper, a \textit{Lipschitz seminorm} on a separable C$^*$-algebra $A$ is a seminorm $L:
\Alg \to \R^+$ defined on a dense subalgebra
$\Alg$ which is closed under involution with the property $L(a^*) = L(a)$ for each $a \in \Alg$ and also $L(1) =
0$ whenever $\Alg$ is unital. We say that a Lipschitz seminorm $L$ is
\textit{nondegenerate} if the set $\{a \in \Alg: \;L(a) = 0\}$ is trivial or contains only multiples of the
identity when $A$ is unital. As pointed out in \cite{HSWZ}, nondegeneracy of $L$ is independent of the choice of the dense subalgebra $\Alg$, but it should be stressed that many of the properties of the Lipschitz seminorm  will depend on $\Alg$.

A Lipschitz seminorm on $A$ determines an extended metric $d_{\Alg,L}$ on $S(A)$ (occasionally written
$d_{\Alg}$, or $d_L$) in a way which provides a noncommutative analogue of the Monge-Kantorovich distance when $A$
is commutative. The metric is given by
\begin{eqnarray}
\label{NoncKan} d_{\Alg,L}(\omega_1,\omega_2) := \sup \{|\omega_1(a) - \omega_2(a)|: \;a \in \Alg, \;L(a) \leq
1\}.
\end{eqnarray}

Conversely a metric $d$ on $S(A)$ defines a nondegenerate seminorm $L_d$ on $A$ via
\begin{eqnarray}
\label{Recov} L_d(a) := \sup \bigg{ \{ } \frac{|\mu(a) -
\nu(a)|}{d(\mu,\nu)}: \; \mu,\nu \in S(A),\;\mu \neq \nu \bigg{ \} }.
\end{eqnarray}
If $L$ is a Lipschitz seminorm on $A$, so is $L_{d_{\Alg,L}}$. When the
Lipschitz seminorm $L$ is \textit{lower semicontinuous}, so that the set $\{a \in \Alg: \; L(a) \leq r\}$ is
closed in $A$ for any and hence all $r > 0$, then $L =
L_{d_{\Alg,L}}$. We shall further call $L$ closed if $L$ is lower semicontinuous and $\Alg =
\textup{dom}(L_{d_{\Alg,L}})$. Hence, starting from a lower
semicontinuous seminorm, the above procedure can be used to extend $L$ to a closed seminorm. All these
observations are well known in the case when $\Alg$ is
unital and the procedure of replacing $\Alg$ with its unitisation $\overline{\Alg} = \Alg \oplus \C I$ and
introducing the new seminorm $\overline{L}(a,\lambda) :=
L(a)$ can easily be used to generalise these results to the non-unital case.

\begin{proposition} (\cite{RV})
Let $(\Alg, \Hil, \Dirac)$ be a spectral triple over a C$^*$-algebra $A$ with faithful representation
$\pi: A \to B(\Hil)$ such that $[\Dirac,\pi(a)] = 0 \iff a \in \C I_A$. Then $L_{\Dirac}(a) := \|[\Dirac, \pi(a)]\|$, defines a
lower semicontinuous Lipschitz seminorm on $\Alg$,
which is closed if and only if $\Alg = C^1(A)$. If the representation $\pi$ is nondegenerate and the spectral
triple comes with a cyclic vector $\xi$ for $(A,\pi)$
such that $\ker{\Dirac} = \C \xi$ then Connes' extended metric on $S(A)$ is a metric.
\end{proposition}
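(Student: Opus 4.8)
The plan is to check the algebraic seminorm axioms together with lower semicontinuity, then dispose of the closedness criterion, and finally treat the metric assertion, where finiteness is the only serious point.

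First I would verify that $L_{\Dirac}$ is a Lipschitz seminorm. Positivity, homogeneity and subadditivity are immediate from linearity of $a \mapsto [\Dirac,\pi(a)]$ and the triangle inequality for the operator norm; $\ast$-invariance follows from the identity $[\Dirac,\pi(a^*)] = -[\Dirac,\pi(a)]^*$ (a consequence of $\Dirac = \Dirac^*$), whence $L_{\Dirac}(a^*) = L_{\Dirac}(a)$, and $L_{\Dirac}(1) = 0$ when $\Alg$ is unital since $[\Dirac,I]=0$; nondegeneracy is exactly the standing hypothesis $[\Dirac,\pi(a)]=0 \iff a\in\C I_A$. For lower semicontinuity the key is to write, for $a \in \Alg$,
\begin{eqnarray*}
L_{\Dirac}(a) = \sup\{\,|S([\Dirac,a])(\xi,\eta)| : \xi,\eta\in\textup{dom}(\Dirac),\ \|\xi\|,\|\eta\|\le 1\,\},
\end{eqnarray*}
using that $\textup{dom}(\Dirac)$ is dense and that $S([\Dirac,a])(\xi,\eta) = \ip{a\xi}{\Dirac\eta} - \ip{a\Dirac\xi}{\eta}$ represents the bounded commutator. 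For each fixed pair $\xi,\eta$ the map $a \mapsto S([\Dirac,a])(\xi,\eta)$ is $\|\cdot\|_A$-continuous, so $L_{\Dirac}$ is a supremum of norm-continuous seminorms and hence lower semicontinuous; indeed the same formula defines a lower semicontinuous extension to all of $A$ with values in $[0,\infty]$, so that all sublevel sets are norm-closed in $A$.

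For the closedness criterion I would use the facts recalled before the statement: lower semicontinuity already gives $L_{\Dirac} = L_{d_{\Alg,L}}$, and $L_{\Dirac}$ is closed precisely when $\Alg = \textup{dom}(L_{d_{\Alg,L}})$. The point is therefore to identify this domain with $C^1(A)$. By the Monge--Kantorovich duality underlying Rieffel's framework, $L_{d_{\Alg,L}}(b) < \infty$ means exactly that $b$ is Lipschitz for $d_{\Alg,L}$, which in turn is equivalent to boundedness of the sesquilinear form $S([\Dirac,b])$, i.e. to $b \in C^1(A)$, with $L_{d_{\Alg,L}}(b) = \|\delta_{\Dirac}(b)\|$. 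Since one always has $\Alg \subseteq C^1(A)$, this yields $\textup{dom}(L_{d_{\Alg,L}}) = C^1(A)$ and hence the equivalence ``closed $\iff \Alg = C^1(A)$''. The duality step is the substantive ingredient here, but it is standard in this setting.

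Finally, for the metric statement, symmetry and the triangle inequality for $d_C$ are built into the supremum formula, and separation is where the nondegeneracy hypothesis is used: if $d_C(\omega_1,\omega_2)=0$ then $\omega_1$ and $\omega_2$ agree on every self-adjoint $a \in \Alg$ with $L_{\Dirac}(a)\le 1$, hence (rescaling those with $0 < L_{\Dirac}(a) < \infty$, the case $L_{\Dirac}(a)=0$ forcing $a \in \C I_A$) on all of $\Alg_{sa}$, thus on $\Alg$, and by density $\omega_1=\omega_2$. The \textbf{main obstacle} is finiteness, i.e. upgrading the extended metric to a genuine one. Here I would let $\omega_{\xi}(\cdot) = \ip{\pi(\cdot)\xi}{\xi}$ be the vector state of the cyclic vector and reduce, via the triangle inequality, to bounding $d_C(\omega,\omega_{\xi})$. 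For $a = a^* \in \Alg$ with $L_{\Dirac}(a)\le 1$ put $b = a - \omega_{\xi}(a)I$; then $\pi(b)\xi \perp \xi$, and since $\Dirac\xi = 0$ the key identity $\Dirac\,\pi(b)\xi = [\Dirac,\pi(b)]\xi$ gives $\|\Dirac\,\pi(b)\xi\| \le L_{\Dirac}(a) \le 1$. Because $\ker\Dirac = \C\xi$ is one--dimensional and $(I+\Dirac^2)^{-1}$ is compact (axiom (2) applied to $1 \in A$ in the unital case), $0$ is isolated in the spectrum, so $\Dirac$ is bounded below by some $g>0$ on $(\C\xi)^{\perp}$ and $\|\pi(b)\xi\| \le 1/g$. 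The delicate part, and the one I expect to cost the most work, is to convert this bound on the single vector $\pi(b)\xi$ into a uniform bound on $|\omega(b)|$ for an arbitrary state $\omega$: this is exactly where cyclicity of $\xi$ is meant to be exploited, approximating the relevant states through $\pi(A)\xi$ and propagating the estimate, and it is the crux of the finiteness assertion.
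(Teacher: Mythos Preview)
The paper does not give its own proof of this proposition: it is quoted from \cite{RV} and stated without argument. So there is no in-paper proof to compare against, only the cited reference.

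On the substance of your attempt: the verification that $L_{\Dirac}$ is a Lipschitz seminorm and the lower semicontinuity argument via the sesquilinear form $S([\Dirac,a])$ are correct and standard. The closedness discussion is acceptable in outline, though the step you call ``Monge--Kantorovich duality'' is doing real work: identifying $\textup{dom}(L_{d_{\Alg,L}})$ with $C^1(A)$ amounts to showing that the closure of $L_{\Dirac}$ coincides with $a\mapsto\|\delta_{\Dirac}(a)\|$ on $C^1(A)$, which is precisely the content one has to extract from the reference rather than something that follows formally.

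The finiteness assertion is where your sketch has a genuine gap. You correctly reduce to bounding $d_C(\omega,\omega_\xi)$ and obtain $\|\pi(b)\xi\|\le 1/g$ from the spectral gap, but you stop at the point you yourself label ``the crux''. The difficulty is real: the estimate $\|\pi(b)\xi\|\le 1/g$ controls a single vector, not $\|b\|$ or $|\omega(b)|$ for a general state. Your proposed remedy, ``approximating the relevant states through $\pi(A)\xi$'', does not work as stated. Cyclicity approximates \emph{vectors} in $\Hil$, not \emph{states} on $A$; an arbitrary state need not be a vector state in this representation. Even for vector states $\omega_\eta$ with $\eta=\pi(c)\xi$, one has $\omega_\eta(b)=\langle\pi(bc)\xi,\pi(c)\xi\rangle/\|\pi(c)\xi\|^2$, and controlling $\pi(bc)\xi$ via $[\Dirac,bc]=[\Dirac,b]c+b[\Dirac,c]$ reintroduces $\|b\|$, which is exactly the quantity you are trying to bound. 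So the argument as written is circular at this point, and the finiteness claim remains unproved in your sketch; this is the step for which the paper defers entirely to \cite{RV}.
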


Rieffel addresses the question of whether a metric induced by a Lipschitz seminorm on a unital separable C$^*$-algebra (or order-unit space) has finite diameter and whether it induces the weak-$*$-topology of $S(A)$ which is a compact metrisable Hausdorff space in this situation. 
To state his result we introduce some notation: for a given Lipschitz seminorm $L$ on $\Alg$, define 
$$
B_L(\Alg) = \{ a \in \Alg : L(a) \leq 1\}, \;\;\;\;\;
\widetilde{B}_L(\Alg) := \{\tilde{a} \in \Alg / \C I : L(a) \leq 1\}$$ 
(note that $L$ passes to the quotient $\Alg / \C I $) and
$$B_{1,L}(\Alg) := \{ a \in B_L(\Alg) : \|a\| \leq 1\}=B_L(\Alg) \cap \overline{\textup{B}}_A,$$
where $\overline{\textup{B}}_A$ is the closed unit ball in $A$.

\begin{proposition}{(\cite{Ri1} 1.8 and 1.9, \cite{Lat1})}\label{metric condition} Let $A$ be a unital C$^*$-algebra, equipped with a nondegenerate Lipschitz seminorm $L$ on a     dense $*$-subalgebra $\Alg$ of $A$. Then:
\begin{enumerate}
\item Equation (\ref{NoncKan}) determines a metric $d_{L,\Alg}$ of finite diameter if and only if $\widetilde{B}_L(\Alg)
    \subseteq A / \C I$ is
    norm-bounded, and further diam$(\widetilde{B}_L(\Alg), \| \cdot \|_{A / \C I}) \leq r$ if and only if
    diam$(S(A),d_L) \leq 2r$, for each $r > 0$.
\item $d_{\Alg,L}$ metrises the weak-$*$-topology of $S(A)$ if and only if the following conditions are satisfied.
\begin{enumerate}
\item $d_{\Alg,L}$ has finite diameter.
\item  $B_{1,L}(\Alg)  \subseteq  A$ is totally bounded in norm.
\end{enumerate}
\end{enumerate}
\end{proposition}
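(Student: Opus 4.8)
The statement is Rieffel's metric criterion (with Latrémolière's refinement supplying the quantitative bound), so the plan is to reassemble the standard Kantorovich--Rubinstein duality argument. Throughout I would reduce to self-adjoint elements: since $L(a^*)=L(a)$ and $\omega(a^*)=\overline{\omega(a)}$, for any $a$ with $L(a)\le 1$ one may rotate by a phase and pass to the real part $\tfrac12(a+a^*)$ without decreasing $|\omega_1(a)-\omega_2(a)|$ or increasing $L$, so every supremum defining $d_L$ is attained over $\Alg_{sa}:=\Alg\cap A_{sa}$.

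For part (1) I would first record the elementary identity, valid for self-adjoint $a$, that $\sup_{\omega_1,\omega_2\in S(A)}|\omega_1(a)-\omega_2(a)| = \max\operatorname{spec}(a)-\min\operatorname{spec}(a) = 2\,\|\tilde a\|_{A/\C I}$, the last equality holding because the quotient norm of a self-adjoint element is half the length of its spectrum. Interchanging the two suprema in the definition of the diameter (a supremum of a supremum may be swapped freely) then yields
\begin{equation*}
\operatorname{diam}(S(A),d_L) \;=\; 2\,\sup\{\|\tilde a\|_{A/\C I}:\ a\in\Alg_{sa},\ L(a)\le 1\}.
\end{equation*}
Both assertions of (1) are immediate consequences of this identity, once one notes that boundedness of the self-adjoint part of $\widetilde{B}_L(\Alg)$ forces boundedness of all of $\widetilde{B}_L(\Alg)$ by splitting into real and imaginary parts (each of which has $L\le 1$).

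For part (2), the easy implication assumes finite diameter and total boundedness of $B_{1,L}(\Alg)$. Because $S(A)$ is weak-$*$ compact and the $d_L$-topology is Hausdorff, it suffices to prove that the identity map from $S(A)$ with its weak-$*$ topology to $(S(A),d_L)$ is continuous, for then it is a homeomorphism. Finite diameter lets me replace the norm-unbounded ball $B_L(\Alg)$ by its norm-bounded part: each $a\in B_L$ may be translated by a scalar (which alters neither $L(a)$ nor $\omega_1(a)-\omega_2(a)$) so as to land in a fixed norm-ball, whence $d_L$ is computed as a supremum over a totally bounded family. Viewing the elements of that family through $a\mapsto(\omega\mapsto\omega(a))$ gives functions that are uniformly bounded and uniformly norm-Lipschitz in $\omega$, so an Arzel\`a--Ascoli/equicontinuity argument shows that weak-$*$ convergence $\omega_n\to\omega$ forces $\sup_{a}|\omega_n(a)-\omega(a)|\to 0$, i.e. $d_L(\omega_n,\omega)\to 0$.

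The converse is where the real work lies. If $d_L$ metrises the weak-$*$ topology then $(S(A),d_L)$ is compact, hence of finite diameter, giving (2a) and, via part (1), boundedness of $\widetilde{B}_L(\Alg)$. For (2b) I would invoke Kadison's function representation, identifying $A_{sa}/\R 1$ isometrically with a subspace of the continuous affine functions on the compact space $S(A)$ via $\tilde a\mapsto(\omega\mapsto\omega(a))$; under this identification the members of $B_{1,L}$ become uniformly bounded functions that are $1$-Lipschitz for $d_L$ (since $|\omega_1(a)-\omega_2(a)|\le d_L(\omega_1,\omega_2)$ whenever $L(a)\le 1$). Compactness of $(S(A),d_L)$ then renders this family equicontinuous and uniformly bounded, hence relatively compact in $C(S(A))$ by Arzel\`a--Ascoli, and transporting this relative compactness back across the isometric embedding yields total boundedness of $B_{1,L}$ in $A$. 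The main obstacle is exactly this transfer: one must verify that the embedding of the Lip-ball into the affine functions is norm-faithful enough that relative compactness of the image is equivalent to total boundedness of $B_{1,L}$, while handling the reduction from general to self-adjoint elements and the passage to $A/\C I$ so that no boundedness is lost. Since the whole statement is quoted from \cite{Ri1} and \cite{Lat1}, I would ultimately cite their arguments for these technical points rather than reprove them in full.
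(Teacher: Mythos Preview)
The paper gives no proof of this proposition: it is stated with the citation ``(\cite{Ri1} 1.8 and 1.9, \cite{Lat1})'' and immediately followed by the remark naming conditions 2.(a) and 2.(b) as ``Rieffel's metric conditions,'' with no \texttt{proof} environment in between. So there is nothing in the paper to compare your argument against; the authors treat the result as background imported from Rieffel and Latr\'emoli\`ere, exactly as you anticipated in your final sentence.

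Your sketch is a faithful outline of Rieffel's original argument: the identity $\sup_{\omega_1,\omega_2}|\omega_1(a)-\omega_2(a)|=2\|\tilde a\|_{A/\C I}$ for self-adjoint $a$, the swap of suprema to get the diameter formula in (1), and the Kadison-representation-plus-Arzel\`a--Ascoli route for (2) are precisely the ingredients in \cite{Ri1}. The only places to be careful are the ones you flag yourself---the reduction to self-adjoint elements (which needs $L(\lambda a)=|\lambda|L(a)$ and $L(a^*)=L(a)$, both assumed) and the fact that the affine-function embedding is isometric on the quotient $A_{sa}/\R I$, so that total boundedness transfers back. None of this is a gap; it is just the content of the cited references.
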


We will refer to the conditions 2.(a) and 2.(b) in Prop.\ref{metric condition} as \textit{Rieffel's metric conditions} or just \textit{metric conditions}.
The situation when $A$ is non-unital is rather more complicated, but Latr{\'e}moli{\`e}re (\cite{Lat1})  shows that, provided one works only with Lipschitz seminorms which give a metric on
$S(A)$ with finite diameter, things are not too complicated. For this case he provides conditions similar to the ones in the preceeding Proposition \ref{metric condition}
which characterise those seminorms that induce the weak-$*$-topology on $S(A)$.

\begin{definition}\label{quantum metric space}
Let $A$ be a separable C$^*$-algebra equipped with a Lipschitz seminorm $L$ on a suitable
dense subalgebra $\Alg$ with the property that $d_{\Alg,L}$ determines a metric of finite diameter inducing the weak-$*$-topology of $S(A)$. Then the pair $(\Alg, L)$  is called a
\textit{quantum metric space} (or  \textit{compact quantum metric space} when $\Alg$ is unital).
\end{definition}
       
Thus $(\Alg, L)$ with $\Alg$ unital will be compact quantum metric space if and only if $(\Alg, d_{\Alg,L})$ satisfies Rieffel's metric conditions.

The final definition is motivated by a similar definition in \cite{BMR} which we will follow in this paper.

\begin{definition}\label{spectral metric space}
Let $(\Alg, H, \Dirac)$ be a spectral triple with corresponding Lipschitz seminorm $L_{\Dirac}$. If $(\Alg, L_{\Dirac})$ is a \textit{quantum metric space}, then $(\Alg, H, \Dirac)$ (or $(\Alg, L_{\Dirac})$) is called a \textit{spectral metric space}. 
\end{definition}

\section{Extensions and Kasparov's KK-Theory.}\label{ext}

In this section we recall and develop some background in KK-theory related to extensions. Further information can be found in Kasparov's seminal paper \cite{Kas3} and in  \cite{Bla}, \cite{HR}.

For a separable C$^*$-algebra $A$, we write $\ell_2(A)$ to mean the Hilbert bimodule $A$ consisting of sequences of the form $(a_n)_{n \in \N}$ such that $\sum_n a_n^* a_n$ converges in norm, equipped with the inner product $\ip{(a_n)}{(b_n)} := \sum_n a_n^* b_n$. We write
$\mathcal{L}_A$ to mean the set of adjointable right $A$-linear operators on $\ell_2(A)$, which becomes a C$^*$-algebra when equipped with the operator norm. We denote by $\mathcal{K}_A$ the C$^*$-subalgebra of $\mathcal{L}_A$ consisting of the closed linear span of operators of the form
$\theta_{x,y}(z) = x\ip{y}{z}, \;x,y,z \in
\ell_2(A)$. Then $\mathcal{K}_A$ is an ideal in $\mathcal{L}_A$ and is isomorphic to the spatial tensor product, $\mathcal{K} \otimes A$, of $A$ by the compact operators on a separable infinite dimensional  Hilbert space. The algebra $\mathcal{L}_A$ is isomorphic to $\mathcal{M}(\mathcal{K} \otimes A)$.
We denote the quotient $\mathcal{L}_A
/ \mathcal{K}_A$ by  $ \mathcal{Q}_A$ and will also use the notation  $\mathcal{L}$ for $B(\ell^2)$ and $\mathcal{Q}$ for the quotient $B(\ell^2)/\mathcal{K}$. 

\subsection{Background and set-up}
The extensions in this article are unital short exact sequences of separable C$^*$-algebras of the form,
\begin{eqnarray}
\label{ExtStableIdeal} \xymatrix{ 0 \ar[r] & \mathcal{K} \otimes B \ar[r]^{\iota} & E \ar[r]^{\sigma} & A \ar[r] &
0}. \label{TopEx}
\end{eqnarray}
Recall that this means $\iota$ is an injective $^*$-homomorphism and regarded as an inclusion map, $\sigma$ is a
surjective $^*$-homomorphism and $\im(\iota) = \ker (\sigma)$. We will always assume that the C$^*$-algebras $A$ and $B$
are unital and that $\mathcal{K} \otimes B$ is the stabilisation of $B$ by compact operators on a
separable, infinite dimensional Hilbert space. Additionally we will always require the ideal $\mathcal{K} \otimes
B$ to be \textit{essential}, i.e. it has non-zero intersection with any other ideal $I \subseteq E$.

The \textit{Busby invariant} of (\ref{ExtStableIdeal}) is a $^*$-homomorphism $\psi: A \to \mathcal{L}_B
/ \mathcal{K}_B =: \mathcal{Q}_B$. The $^*$-homomorphism  $\psi$ can be regarded as  a characteristic of the extension itself, since the
extension can be recovered from $\psi$, up to isomorphism, as the pullback C$^*$-algebra,
\begin{eqnarray}
E \cong \mathcal{L}_B \oplus_{(q_B, \psi)} A := \{(x,a) \in \mathcal{L}_B \oplus A: \;q_B(x) = \psi(a)\} \label{pull-back}
\end{eqnarray}
(here, $q_B: \mathcal{L}_B \to \mathcal{Q}_B$ is the quotient map). The assumptions above imply that $\psi$ and consequently the map $\pi: E \to \mathcal{L}_B, \;\pi(x,a) = x$ is injective. The maps fit together in the commuting diagram
\begin{eqnarray}\label{commuting diagram}
\xymatrix{ 0 \ar[r] & \mathcal{K}_B \ar@{^{(}->}[r]^{\iota} \ar[d]^{\pi|_{\overline{B}}} & E \ar[r]^{\sigma}
\ar[d]^{\pi} & A \ar[r] \ar[d]^{\psi}
& 0\\ 0 \ar[r] & \mathcal{K}_B \ar@{^{(}->}[r]^{\iota_B} & \mathcal{L}_B \ar[r]^{q_B} & \mathcal{Q}_B \ar[r] &
0.}
\end{eqnarray}

We do not consider all such extensions, but restrict our attention to the situation in which the Busby invariant $\psi$ admits a unital completely positive lift, i.e., there is a unital completely positive map $s: A \to \mathcal{L}_B$ such that $q_B \circ s = \psi$. This is equivalent to the existence of a ucp lift of $\sigma$. Such extensions are called \textit{semisplit}.
A well known application of the Kasparov-Stinespring Theorem shows that, in this setting, there is a faithful representation $\rho: A \to M_2(\mathcal{L}_B) \cong \mathcal{L}_B$ and an orthogonal projection $P \in M_2(\mathcal{L}_B) \cong \mathcal{L}_B$ such that $[P, \rho(a)] \in M_2(\mathcal{K}_B)$ and $\rho_{11}(a) = s(a)$
for each $a \in A$, where,
\begin{eqnarray*}
\rho = \begin{pmatrix} \rho_{11} & \rho_{12} \\ \rho_{21} & \rho_{22} \end{pmatrix}; \;\;P = \begin{pmatrix} 1 & 0
\\ 0
& 0 \end{pmatrix}.
\end{eqnarray*}
We call $(\rho, P)$ a Stinespring dilation of $s: A \to \mathcal{L}_B \cong \mathcal{M}(\mathcal{K} \otimes B)$.
The existence of such a map is not automatic, unless $A$ is a nuclear C$^*$-algebra in which case the existence of a completely positive lift follows from the Choi-Effros lifting theorem. 

The Stinespring dilation $(P, \rho)$ can be used to define a Kasparov cycle $\psi^*$ which is the element of $KK^1(A,B)$
represented by the triple $(\ell_2 (B) \oplus \ell_2(B), \rho, 2P-1)=(\ell_2 (B) , \rho, 2P-1) $. A well known result of Kasparov (\cite{Kas3}) says that there is a six-term
exact sequence in both K-theory and K-homology. In the  case of K-homology the sequence has the  form
\begin{eqnarray}
\label{SixTerm} \xymatrix{K^0(A) \ar[r]^{\sigma^*} & K^0(E)  \ar[r]^{\iota^*} & K^0(B) \ar[d]^{\delta_0^*} \\
K^1(B)
\ar[u]^{\delta_1^*} & \ar[l]^{\iota^*} K^1(E) & \ar[l]^{\sigma^*} K^1(A),  }
\end{eqnarray}
 where the boundary maps are defined by taking the internal Kasparov product with $\psi^*$.

\subsection{Toeplitz type extensions and KK-theory}

In this section we discuss a characterisation of Toeplitz type extensions showing that they form a large class. Moreover, we will introduce the representations of the extension algebra which are relevant in order to define our spectral triple on the extension algebra.

In what follows, we will assume $B$ is unital and we shall let $j: \mathcal{L} \to \mathcal{L}_B$,
$\bar{\jmath}: \mathcal{Q} \to \mathcal{Q}_B$, $q: \mathcal{L} \to \mathcal{Q}$ and $q_B: \mathcal{L}_B
\mapsto \mathcal{Q}_B$ be the natural maps, so that $q_B \circ j = \bar{\jmath} \circ q$. The main result of this section is contained in the following:

\begin{proposition}\label{Toeplitz-characterisation}
Given an extension (\ref{ExtStableIdeal}), where $A$ and $B$ are separable C$^*$-algebras and $\psi: A \to \mathcal{Q}_B$ is the Busby invariant of this extension, the following are equivalent.
\begin{enumerate}
\item There is a $^*$-homomorphism $\psi_0: A \to \mathcal{Q}$ such that $\bar{\jmath} \circ \psi_0 = \psi$.
\item There is a C$^*$-algebra $E_0$, an injective $^*$-homomorphism $r: E_0 \to E$, an injective $^*$-homomorphism $\pi_0: E_0 \to \mathcal{L}$ and a surjective $^*$-homomorphism $\sigma_0: E_0 \to A$ such that the following diagrams commute:
\begin{eqnarray}
\xymatrix{ 0 \ar[r] & \mathcal{K} \ar@{^{(}->}[r]^{\iota_0} \ar[d]^{r|_{\mathcal{K}}} & E_0 \ar[r]^{\sigma_0}
\ar[d]^{r} & A \ar[r] \ar@{=}[d] &
0\\ 0 \ar[r] & \mathcal{K}_B \ar@{^{(}->}[r]^{\iota} & E \ar[r]^{\sigma} & A \ar[r] & 0}
\label{character1}
\end{eqnarray}
\begin{eqnarray}
\xymatrix{ E_0 \ar[r]^{\pi_0} \ar[d]^{r} &
\mathcal{L} \ar[d]^{j} \\ E \ar[r]^{\pi} & \mathcal{L}_B.}
\label{ch2}
\end{eqnarray}
\end{enumerate}
\end{proposition}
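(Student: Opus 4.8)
The plan is to run everything through the pullback description of semisplit extensions by their Busby invariants, so that the equivalence reduces to two diagram chases. The one genuinely non-formal ingredient, which I would isolate first, is that the inclusion $\bar{\jmath}:\mathcal{Q}\to\mathcal{Q}_B$ is injective, equivalently $j^{-1}(\mathcal{K}_B)=\mathcal{K}$ inside $\mathcal{L}$. Since $B$ is unital, $j$ is the amplification $T\mapsto T\otimes 1_B$; choosing a state $\phi$ on $B$ the slice map $\mathrm{id}\otimes\phi:\mathcal{K}\otimes B\to\mathcal{K}$ satisfies $(\mathrm{id}\otimes\phi)(T\otimes 1_B)=T$, so $T\otimes 1_B\in\mathcal{K}_B=\mathcal{K}\otimes B$ forces $T\in\mathcal{K}$; the same formula gives injectivity of $j$ itself. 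I would also recall the identifications $E\cong\mathcal{L}_B\oplus_{(q_B,\psi)}A=\{(x,a):q_B(x)=\psi(a)\}$ with $\pi(x,a)=x$ and $\sigma(x,a)=a$, together with the relations read off from diagram (\ref{commuting diagram}): $\pi\circ\iota=\iota_B$, $q_B\circ\pi=\psi\circ\sigma$, and $\bar{\jmath}\circ q=q_B\circ j$.

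For $(1)\Rightarrow(2)$, given $\psi_0$ with $\bar{\jmath}\circ\psi_0=\psi$ I would take $E_0$ to be the pullback $\mathcal{L}\oplus_{(q,\psi_0)}A=\{(x,a):q(x)=\psi_0(a)\}$, which is automatically an extension $0\to\mathcal{K}\to E_0\to A\to 0$ with $\iota_0(k)=(k,0)$ and $\sigma_0(x,a)=a$, and define $\pi_0(x,a)=x$ and $r(x,a)=(j(x),a)$. The map $r$ lands in $E$ because $q_B(j(x))=\bar{\jmath}(q(x))=\bar{\jmath}(\psi_0(a))=\psi(a)$. Injectivity of $r$ is immediate from injectivity of $j$, while injectivity of $\pi_0$ amounts to injectivity of $\psi_0$, which follows from injectivity of $\psi$ (the standing essentiality hypothesis on the ideal) together with injectivity of $\bar{\jmath}$. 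The two diagrams then commute by direct inspection: $\pi\circ r=j\circ\pi_0$ and $\sigma\circ r=\sigma_0$ hold by construction, and $r\circ\iota_0=\iota\circ(r|_{\mathcal{K}})$ with $r|_{\mathcal{K}}=j|_{\mathcal{K}}$.

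For $(2)\Rightarrow(1)$, given the data of (2) I would first show $\pi_0(\iota_0(\mathcal{K}))\subseteq\mathcal{K}$. Chasing diagrams (\ref{character1}) and (\ref{ch2}) gives $j\circ\pi_0\circ\iota_0=\pi\circ r\circ\iota_0=\pi\circ\iota\circ(r|_{\mathcal{K}})=\iota_B\circ(r|_{\mathcal{K}})$, whose image lies in $\mathcal{K}_B$; hence $\pi_0(\iota_0(\mathcal{K}))\subseteq j^{-1}(\mathcal{K}_B)=\mathcal{K}$ by injectivity of $\bar{\jmath}$. Thus $q\circ\pi_0$ annihilates $\iota_0(\mathcal{K})=\ker\sigma_0$ (exactness of the top row), so it factors through the surjection $\sigma_0$ as $q\circ\pi_0=\psi_0\circ\sigma_0$ for a unique $^*$-homomorphism $\psi_0:A\to\mathcal{Q}$. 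Finally, choosing $e\in E_0$ with $\sigma_0(e)=a$, I would verify $\bar{\jmath}\circ\psi_0=\psi$ by the computation $\bar{\jmath}(\psi_0(a))=\bar{\jmath}(q(\pi_0(e)))=q_B(j(\pi_0(e)))=q_B(\pi(r(e)))=\psi(\sigma(r(e)))=\psi(\sigma_0(e))=\psi(a)$, using in turn the factorization, $\bar{\jmath}\circ q=q_B\circ j$, diagram (\ref{ch2}), $q_B\circ\pi=\psi\circ\sigma$, and $\sigma\circ r=\sigma_0$.

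The expected main obstacle is not the diagram chasing, which is routine bookkeeping with the pullbacks and the commuting squares, but rather the injectivity of $\bar{\jmath}$ (equivalently $j^{-1}(\mathcal{K}_B)=\mathcal{K}$): this is exactly where unitality of $B$ enters, and it is the single fact that powers both implications. A secondary point requiring care is the injectivity of $\pi_0$ in the direction $(1)\Rightarrow(2)$, which is not formal and relies on $\psi$ being injective under the standing essentiality assumption on the ideal.
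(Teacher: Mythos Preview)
Your proof is correct and follows essentially the same route as the paper: the pullback construction of $E_0$ for $(1)\Rightarrow(2)$ and the factorisation of $q\circ\pi_0$ through $\sigma_0$ for $(2)\Rightarrow(1)$, with the same chain of identities verifying $\bar{\jmath}\circ\psi_0=\psi$. You are in fact more careful than the paper on two points it leaves implicit: the injectivity of $\bar{\jmath}$ (equivalently $j^{-1}(\mathcal{K}_B)=\mathcal{K}$), and the verification that $\pi_0(\iota_0(\mathcal{K}))\subseteq\mathcal{K}$ in the reverse direction, which the paper asserts without justification.
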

\begin{proof}
$(1) \implies (2)$: Starting from a
homomorphism $\psi_0: A \to \mathcal{Q}$ as above, we can define $E_0$ as the pullback C$^*$-algebra
\begin{eqnarray*}
E_0 := B(\ell^2) \oplus_{(q, \psi_0)} A :=
\{(x,a) \in B(\ell^2) \oplus A : q(x) = \psi_0(a)\}.
\end{eqnarray*}
There are  natural maps $\pi_0: E_0 \to B(\ell_2)$ and $\sigma_0: E_0 \to A$. Similarly, as pointed out in (\ref{pull-back}) $E$ is given as a pullback $E= \mathcal{L}_B \oplus_{(q_B,\psi)} A$, so that we also have natural maps $\pi : E \to \mathcal{L}_B$ and $\sigma : E \to A$. Notice that $\ker(\sigma_0) = \ker (q \circ \pi_0)$ is isomorphic to the algebra of compact operators. The map $r: E_0 \to E$ can be defined by $r((x,a)) := (j \circ \pi_0(x), a)$, which is injective and thus establishes the first part of the proof.

\medskip

\noindent
$(2) \implies (1)$: Let $\psi_0$ be the map which is given by
\begin{eqnarray*}
\psi_0(\sigma_0(e)) = q(\pi_0(e))).
\end{eqnarray*}
This map is well defined: if $e_1, e_2 \in E_0$ are such that $\sigma_0(e_1) = \sigma_0(e_2)$ then $e_1 - e_2$ is a compact operator, so that $q(\pi_0(e_1 - e_2)))$ vanishes. For any $e \in
E_0$, we have
\begin{eqnarray*}
(\bar{\jmath} \circ \psi_0 \circ \sigma_0)(e) = (\bar{\jmath} \circ q \circ \pi_0)(e) && \textup{(by definition of $\psi_0$)} \\
 = (q_B \circ j \circ \pi_0)(e) && \textup{(since $q_B \circ j = \bar{\jmath} \circ q$)}  \\
 = (q_B \circ \pi \circ r)(e) && \textup{(from diagram (\ref{ch2}))} \\
 = (\psi \circ \sigma \circ r)(e) && \textup{(from diagram (\ref{commuting diagram}))} \\
 = (\psi \circ \sigma_0)(e)&& \textup{(from diagram (\ref{character1})}.
\end{eqnarray*}
Since $\sigma_0$ is surjective, $\bar{\jmath} \circ \psi_0 = \psi$, completing the proof.
\end{proof}

It is clear that in the setting of the last Proposition \ref{Toeplitz-characterisation}, the map $\psi_0$ is injective if and only if $\psi$ is injective, so that we can assume the extension corresponding to the top row of (\ref{commuting diagram}) is essential. If $s: A \to
\mathcal{L}$ is a completely positive lift for $\psi_0$ then $j \circ s$ is a completely positive lift for $\psi$.

To apply this to  our Toeplitz type extensions recall (Def.\ref{Toeplitz type}) that an extension (\ref{extension}) is of Toeplitz type if  $\pi_A: A \to B(H_A)$  and $\pi_B: B \to B(H_B)$ are faithful representations with $[P, \pi_A(a)] \in \mathcal{K}(H_A)$ for each $a \in A$, $P \pi_A(a)P  \; \cap \;\mathcal{K}(P H_A) =    \{0\}$,
and $E$ is isomorphic to the subalgebra of $B(H_A \otimes H_B)$ generated by $\mathcal{K}(PH_A) \otimes    \pi_B(B)$ and $P\pi_A(A)P \otimes \C I_B$.  Thus, omitting the representations, the extension is of the form
\begin{eqnarray}
\xymatrix{ 0 \ar[r] & \mathcal{K} \otimes B \ar[r]^{} &  \mathcal{K}(PH_A) \otimes B + PAP \otimes \C I_B \ar[r]^{} & A \ar[r] & 0.}
\label{E}
\end{eqnarray} 
$(\pi_A, \pi_B, P)$ is called the corresponding Toeplitz triple. From this extension we obtain the extension
\begin{eqnarray}
\xymatrix{ 0 \ar[r] & \mathcal{K}(PH_A) \ar[r]^{} & E_0 \ar[r] & A \ar[r] & 0}.
\label{E_0}
\end{eqnarray}
of $A$ by $\mathcal{K}$,
where $E_0= P\pi_A(A) P + \mathcal{K} (PH_A)$. There is a natural inclusion map $r: E_0 \hookrightarrow E$. Moreover, $E_0$ embedds naturally into $\mathcal{L}\cong B(H_A)$ defining a degenerate (i.e. non-unital) but faithful representation $\pi_0 : E_0 \to \mathcal{L}$.

Similarly, there is a degenerate but faithful representation $\pi : E \to B(H_A \otimes H_B)$ given by its very definition as a subalgebra. Since $H_A$ is separable and infinite dimensional we have 
$\mathcal{L}_B \cong \mathcal{M}(\mathcal{K} \otimes B) \cong \mathcal{M}(\mathcal{K}(H_A) \otimes \pi_B (B)) \subseteq B(H_A \otimes H_B)$. We can therefore think of $\pi$ as a representation
$\pi : E \to \mathcal{L}_B$. There is a natural inclusion  $r:E_0  \to E$ (using that $B$ is unital) such that the diagrams in 
Prop.\ref{Toeplitz-characterisation}.(2) commute. 

Note that the Busby invariants for the extension $E$ and $E_0$ are given by 
$$
\psi(a)= q_B((P\otimes I ) (\pi_A(a) \otimes I) (P \otimes I))
$$ and 
$$
\psi_0 (a) = q(P \pi_A(a) P), 
$$
which implies that  $\bar{\jmath} \circ \psi_0 = \psi$. Thus starting from a Toeplitz type extension $E$ of $A$ by $\mathcal{K} \otimes B$ we found  an extension $E_0$ of $A$ by $\mathcal{K}$ satisfying the conditions (1) and, hence, (2) of Prop.\ref{Toeplitz-characterisation}.

We mention that with this interpretation  $s(a):= P \pi_A(a)P \otimes 1$ can be regarded as a completely positive map $s : A \to \mathcal{M}(\mathcal{K}(PH_A) \otimes B)$ such that $q \circ s : A \to \mathcal{Q} (\mathcal{K} (P H_A) \otimes B)$ is the Busby invariant of the extension. Hence $s$ is a cp-lift of the extension and  $(P \otimes 1, \pi_A \otimes 1)$ can be regarded as  Stinespring dilation of the semisplit extension (\ref{E}).

\begin{corollary} \label{tensorial}
Let $E$ be a Toeplitz type extension (\ref{E}) with corresponding extension (\ref{E_0}). Then, with $\pi, \pi_0, r, \psi, \psi_0$ as defined above,  condition (1) and (2) of Prop.\ref{Toeplitz-characterisation} are satisfied.

Conversely, given an essential semisplit extension  
\begin{eqnarray*}
\xymatrix{ 0 \ar[r] & \mathcal{K}  \ar[r]^{\iota} & E_0 \ar[r] & A \ar[r] & 0}.
\end{eqnarray*}
then
there exists faithful representations $\pi_A : A \to B(H_A)$ and $\pi_B : B \to B(H_B)$ and $P \in B(H_A)$ an infinite dimensional projection such that $E_0 \cong P\pi_A(A) P + \mathcal{K} (PH_A)$ and if we define $E$ by $E= \mathcal{K}(PH_A) \otimes B + PAP \otimes \C I_B$ and $\pi, \pi_0, r, \psi, \psi_0$ as before then (1) and (2) of Prop.\ref{Toeplitz-characterisation} are satisfied.
\end{corollary}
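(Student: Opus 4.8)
This direction needs essentially no new argument. The maps $\psi$, $\psi_0$, $\pi$, $\pi_0$ and $r$ have already been written down explicitly in the discussion preceding the statement, where it was observed that $\psi_0(a) = q(P\pi_A(a)P)$ and $\psi(a) = q_B((P\otimes I)(\pi_A(a)\otimes I)(P\otimes I))$ satisfy $\bar{\jmath}\circ\psi_0 = \psi$. This is precisely condition (1) of Proposition \ref{Toeplitz-characterisation}, and the implication $(1)\Rightarrow(2)$ proved there then yields condition (2). So I would simply assemble these observations and cite Proposition \ref{Toeplitz-characterisation}.

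\textbf{Converse direction.} Here I start from the essential semisplit extension of $A$ by $\mathcal{K}$. Essentiality says its Busby invariant $\psi_0\colon A \to \mathcal{Q}$ is injective, while semisplitness provides a unital completely positive lift $s\colon A \to B(\ell^2)\cong\mathcal{M}(\mathcal{K})$ with $q\circ s = \psi_0$. I then invoke the Kasparov--Stinespring construction recalled in Section \ref{ext}, now with $B = \C$: it furnishes a faithful representation $\rho\colon A \to M_2(B(\ell^2))$ and the projection $P = \begin{pmatrix}1&0\\0&0\end{pmatrix}$ with $[P,\rho(a)]\in M_2(\mathcal{K})$ and $\rho_{11} = s$. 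Setting $H_A := \ell^2\oplus\ell^2$, $\pi_A := \rho$ and identifying $M_2(\mathcal{K})$ with $\mathcal{K}(H_A)$, the projection $P$ is infinite-dimensional, $[P,\pi_A(a)]\in\mathcal{K}(H_A)$, and $P\pi_A(a)P = s(a)$, so that $a\mapsto P\pi_A(a)P$ is a completely positive lift of $\psi_0$.

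Next I identify $E_0$ spatially. By the description of semisplit extensions by compacts (Section 2.7 of \cite{HR}), the C$^*$-subalgebra $P\pi_A(A)P + \mathcal{K}(PH_A)$ of $B(PH_A)$ is exactly the pullback $B(\ell^2)\oplus_{(q,\psi_0)}A$, giving $E_0\cong P\pi_A(A)P + \mathcal{K}(PH_A)$. Injectivity of $\psi_0$ yields the disjointness $P\pi_A(A)P\cap\mathcal{K}(PH_A) = \{0\}$: if $P\pi_A(a)P$ is compact then $\psi_0(a) = q(P\pi_A(a)P) = 0$, whence $a = 0$. I then pick any faithful representation $\pi_B\colon B \to B(H_B)$ of the separable unital algebra $B$ and set $E := \mathcal{K}(PH_A)\otimes B + PAP\otimes\C I_B$ acting on $H_A\otimes H_B$. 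To conclude that $(\pi_A,\pi_B,P)$ is a Toeplitz triple in the sense of Definition \ref{Toeplitz type}, only the remaining disjointness $\mathcal{K}(PH_A)\otimes\pi_B(B)\cap P\pi_A(A)P\otimes\C I_B = \{0\}$ needs an argument; I would obtain it from the $E_0$-level statement via a slice map $\mathrm{id}\otimes\phi$ for a state $\phi$ on $B$, which carries $\mathcal{K}(PH_A)\otimes B$ into $\mathcal{K}(PH_A)$ and sends $P\pi_A(a)P\otimes I_B$ to $P\pi_A(a)P$, forcing the latter to be compact and hence zero. With $E$ thus of Toeplitz type, the maps $\pi,\pi_0,r,\psi,\psi_0$ are defined as before, and the forward direction applies verbatim to yield conditions (1) and (2).

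\textbf{Main obstacle.} The genuinely substantive inputs are the Kasparov--Stinespring dilation producing $(\rho,P)$ and the spatial identification $E_0\cong P\pi_A(A)P + \mathcal{K}(PH_A)$; both are standard and already recalled in Section \ref{ext}, so the work is mostly organisational. I expect the one point requiring care to be transferring disjointness from the $E_0$-level to the tensored $E$-level, where the slice-map argument is the clean route and where one must check that essentiality, i.e. injectivity of $\psi_0$, is invoked correctly.
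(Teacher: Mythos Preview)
Your proposal is correct and follows essentially the same route as the paper: both directions reduce to the discussion preceding the corollary together with the Kasparov--Stinespring dilation and the spatial description of semisplit extensions by compacts from \cite{HR}, Section~2.7, after which one takes an arbitrary faithful representation $\pi_B$ of $B$. Your write-up is in fact more careful than the paper's, which omits the disjointness check $\mathcal{K}(PH_A)\otimes\pi_B(B)\cap P\pi_A(A)P\otimes\C I_B = \{0\}$; your slice-map argument fills this in cleanly.
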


\begin{proof}
The first part follows from the discussion preceeding the Corollary. For the second part it is known (\cite{HR}, 2.7.10) that the required representation $\pi_A $ and projections can be found for every semisplit extension of the form $0 \to \mathcal{K} \to E_0 \to A \to 0$ (it is given by the Stinespring dilation we decribed.) Once we have that we can use any faithful representation $\pi_B : B \to B(H_B)$ and  define $E$ and $\pi, \pi_0, r, \psi, \psi_0$ as before satisfying the required identities.
\end{proof}

When a Toeplitz triple exists, we have our  $^*$-homomorphism $\pi : E \to B(H_A \otimes H_B)$ given on the generators via
\begin{eqnarray*}
\pi(k \otimes b)(\eta \otimes \nu) &:= &k\eta \otimes \pi_B(b)\nu, \\
\pi(PaP \otimes I_B)(\eta \otimes \nu) &:=&
P\pi_A(a)P \eta \otimes \nu, 
\end{eqnarray*}
where $a \in A$ and $k \in  \mathcal{K}(PH_A)$ which is faithful but degenerate (i.e. not unital). We also have another representation $\pi_{\sigma}: E \to B(H_A \otimes H_B)$ given by 
$$
\pi_{\sigma} = \pi_A \circ \sigma \otimes 1,
$$
where $\sigma : E \to A$ is the quotient map in the extension, given by 
$$
\sigma(k \otimes b) := 0, \textup{ and } \sigma(PaP \otimes I_B) := a,
$$
where $a \in A$, $b \in B$ and $k \in \mathcal{K}(PH_A)$. 
The representation $\pi_{\sigma}$ is non-degenerate (unital) but not faithful.


\section{Construction of the spectral triple.}\label{const}

We will now begin to describe the steps needed to construct a spectral triple on an extension
(\ref{ExtStableIdeal}) of Toeplitz type with Toeplitz triple $(\pi_A,\pi_B,P)$ introduced earlier in Def.\ref{Toeplitz type}.

\subsection{Smoothness criteria.}
In this section we want to discuss the smoothness conditions in Def.\ref{Toeplitz type} which we need for our main result. As stated in Def.\ref{Toeplitz type} given a Toeplitz type extension 
\begin{eqnarray*}
\xymatrix{ 0 \ar[r] & \mathcal{K} \otimes B \ar[r]^{} &  \mathcal{K}(PH_A) \otimes B + PAP \otimes \C I_B \ar[r]^{} & A \ar[r] & 0}
\end{eqnarray*}
and a spectral triple $(\Alg, H_A, \Dirac_A)$ on $A$ we say that the quadruple $(\Alg, H_A, \Dirac_A, P)$  is of Toeplitz type if  
\begin{enumerate}
\item
$P$ and $\Dirac_A$ commute,
\item
$[P, \pi_A (\Alg)] \subseteq \mathcal{C}(H_A) $,
\end{enumerate}
where $\mathcal{C}(H_A)$ was discussed in the introduction and is formally defined below.

Condition (1) means that $P$ should leave the domain of $\Dirac_A$ invariant and
commute with each of the spectral projections of
$\Dirac_A$, so that the operator $[\Dirac_A, P]: \textup{dom}(\Dirac_A) \to H_A$ vanishes. Thus we can decompose $\textup{dom}(\Dirac_A)$ as an orthogonal direct sum of vector spaces,
\begin{eqnarray*}
(\textup{dom}(\Dirac_A) \cap PH_A ) \;\;\oplus \;\; (\textup{dom}(\Dirac_A) \cap (1-P)H_A),
\end{eqnarray*}
which are dense in $PH_A$ and $(1-P)H_A$ respectively. $\Dirac_A$ decomposes as a diagonal block matrix $\Dirac_A^p|_{P H_A} \oplus
\Dirac_A^q|_{(1-P) H_A}$ with respect to this decomposition, where $\Dirac_A^p := P \Dirac_A$ and $\Dirac_A^q := (1-P) \Dirac_A$.

To discuss the second condition we formally introduce the notion of differentiability for compact operators in the next definition.
\begin{definition} \label{D-diff}
We define the dense subalgebra of \textit{$\Dirac_A$-differentiable compacts}, $\mathcal{C}(H_A) \subseteq \mathcal{K}(H_A)$, to be the algebra of all compact operators $y \in \mathcal{K}(H_A)$ such that,
\begin{enumerate}
\item $y (\textup{dom}(\Dirac_A)) \subseteq \textup{dom}(\Dirac_A)$,
\item the operators $y\Dirac_A: \textup{dom}(\Dirac_A) \to H_A$ and $\Dirac_A y: \textup{dom}(\Dirac_A) \to H_A$ are closable,
\item the closures, $\textup{cl}(y \Dirac_A)$, $\textup{cl}(\Dirac_A y)$ respectively, are bounded operators.
\end{enumerate}
\end{definition}
\begin{remark}\label{motivation}
Our motivation for choosing the term $\Dirac_A$-differentiable compacts is based on the following observation: the same information as was given above can be used to write down an even spectral triple on the algebra of compact operators. It is given by the triple
\begin{eqnarray*}
\begin{pmatrix} \mathcal{C}(H_A ), & \textup{id} \oplus 0, & \Dirac := \begin{bmatrix} 0 & \Dirac_A
\\[1ex] \Dirac_A & 0 \end{bmatrix} \end{pmatrix}.
\end{eqnarray*}
We note that $\mathcal{C}(H_A)$ can be viewed as a Banach $^*$-algebra when equipped with the norm $\|y\|_1
:= \|y\| + \max\{\|\Dirac_A y\| ,\|y \Dirac_A\|\}$. Thus $\mathcal{C}(H_A)$ plays the role of the `differentiable' elements with respect to this choice of spectral triple.
\end{remark}

\begin{proposition}\label{smoothness}
Let $(\Alg, H_A, \Dirac_A)$ be a spectral triple on $A$ and let $P \in B(H_A)$ be an orthogonal projection commuting with $\Dirac_A$. The three following conditions are equivalent:
\begin{enumerate}
\item $[P, \pi_A(a)] \in \mathcal{C}(H_A)$ for each $a \in \Alg$,
\item $[\Dirac_A^p, \pi_A(a)]$ and $[\Dirac_A^q, \pi_A(a)]$ extend to bounded operators in $B(H_A)$ for each $a \in \Alg$,
\item $[(2P-1)\Dirac_A, \pi_A(a)]$ extends to a bounded operator in $B(H_A)$ for each $a \in \Alg$.
\end{enumerate}
\end{proposition}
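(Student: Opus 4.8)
The plan is to collapse all three conditions to the single boundedness statement ``$[\Dirac_A^p,\pi_A(a)]$ extends to a bounded operator'', and to do so via two operator identities valid on the common core $\textup{dom}(\Dirac_A)$. Throughout I abbreviate $\pi_A(a)$ to $a$, set $Q=1-P$ and $y=[P,a]$, and use that $P$ commuting with $\Dirac_A$ gives $\Dirac_A^p=P\Dirac_A=\Dirac_A P$ and $\Dirac_A^q=Q\Dirac_A=\Dirac_A Q$, so that on $\textup{dom}(\Dirac_A)$ one has $\Dirac_A=\Dirac_A^p+\Dirac_A^q$ and $(2P-1)\Dirac_A=\Dirac_A^p-\Dirac_A^q$. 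The heart of the argument is the pair of identities, computed on $\xi\in\textup{dom}(\Dirac_A)$ by repeatedly substituting $\Dirac_A P=P\Dirac_A$:
\begin{align*}
\Dirac_A\, y &= [\Dirac_A^p, a] - [\Dirac_A, a]\,P, \\
y\,\Dirac_A &= [\Dirac_A^p, a] - P\,[\Dirac_A, a].
\end{align*}
Because $[\Dirac_A,a]$ is bounded by axiom (1) of a spectral triple, the correction terms $[\Dirac_A,a]P$ and $P[\Dirac_A,a]$ are bounded, so each of $\Dirac_A y$ and $y\Dirac_A$ extends to a bounded operator if and only if $[\Dirac_A^p,a]$ does.

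For the equivalence $(2)\iff(3)$ I would argue purely by linear combination: on $\textup{dom}(\Dirac_A)$ we have $[\Dirac_A,a]=[\Dirac_A^p,a]+[\Dirac_A^q,a]$ and $[(2P-1)\Dirac_A,a]=[\Dirac_A^p,a]-[\Dirac_A^q,a]$, whence $2[\Dirac_A^p,a]=[\Dirac_A,a]+[(2P-1)\Dirac_A,a]$ and $2[\Dirac_A^q,a]=[\Dirac_A,a]-[(2P-1)\Dirac_A,a]$. Since $[\Dirac_A,a]$ is always bounded, the first of these already shows that $[\Dirac_A^p,a]$ is bounded iff $[\Dirac_A^q,a]$ is bounded, and boundedness of $[(2P-1)\Dirac_A,a]$ is equivalent to the simultaneous boundedness of the two, giving $(2)\iff(3)$ at once.

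For $(1)\iff(2)$ I would verify the clauses of Def.\ref{D-diff} for $y=[P,a]$. Domain invariance $y\,\textup{dom}(\Dirac_A)\subseteq\textup{dom}(\Dirac_A)$ is automatic, since both $P$ (commuting with $\Dirac_A$) and $a$ (axiom (1)) preserve $\textup{dom}(\Dirac_A)$, hence so does $y=Pa-aP$. The displayed identities then show that $\Dirac_A y$ and $y\Dirac_A$ extend to bounded operators exactly when $[\Dirac_A^p,a]$ is bounded; and a densely defined operator that extends to a bounded operator is automatically closable with that extension as its closure, so the ``closable with bounded closure'' wording of Def.\ref{D-diff} coincides with the ``extends to a bounded operator'' wording of (2). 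The only remaining clause of membership $y\in\mathcal{C}(H_A)$ is compactness of $y$, which is furnished by the standing Toeplitz-type hypothesis $[P,\pi_A(a)]\in\mathcal{K}(H_A)$ built into Def.\ref{Toeplitz type}; granting it, $(1)$ reduces precisely to the boundedness of $[\Dirac_A^p,a]$, i.e. to $(2)$, and the chain $(1)\iff(2)\iff(3)$ closes.

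The step needing the most care is the establishment of the two identities: keeping track of the fact that all the operators involved live a priori only on $\textup{dom}(\Dirac_A)$, checking that the substitutions $\Dirac_A P=P\Dirac_A$ are legitimate there so that the identities hold as genuine operator equalities, and then cleanly reconciling the compactness clause of $\mathcal{C}(H_A)$ (absent from (2) and (3)) with the purely differential content of the proposition. The algebra itself is short, but this bookkeeping is where the argument must be stated precisely.
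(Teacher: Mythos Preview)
Your argument is correct and rests on the same operator identities as the paper's own proof; the core computation $y\Dirac_A=[\Dirac_A^p,a]-P[\Dirac_A,a]$ is exactly the paper's $[\Dirac_A^p,a]=P[\Dirac_A,a]+[P,a]\Dirac_A$ rearranged. The organisational differences are minor: the paper runs the cycle $(1)\Rightarrow(2)\Rightarrow(3)\Rightarrow(1)$, whereas you collapse everything to the single condition ``$[\Dirac_A^p,a]$ bounded'' and prove the two equivalences $(1)\Leftrightarrow(2)$ and $(2)\Leftrightarrow(3)$ directly. Your packaging is a touch slicker, and by writing down both identities $\Dirac_A y=[\Dirac_A^p,a]-[\Dirac_A,a]P$ and $y\Dirac_A=[\Dirac_A^p,a]-P[\Dirac_A,a]$ you make explicit the two-sided boundedness required by Definition~\ref{D-diff}, which the paper leaves somewhat implicit in its $(3)\Rightarrow(1)$ step (it only exhibits $[P,a]\Dirac_A$). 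You also flag the compactness clause of $\mathcal{C}(H_A)$ and resolve it via the ambient Toeplitz hypothesis; the paper's proof does not address this clause at all, so on that point you are being more careful, not less.
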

\begin{proof}
If (1) holds, then the operators
\begin{eqnarray*}
&& [\Dirac_A^p, \pi_A(a)] = P[\Dirac_A, \pi_A(a)] + [P,\pi_A(a)]\Dirac_A, \\ && [\Dirac_A^q, \pi_A(a)] =
(1-P)[\Dirac_A, \pi_A(a)] - [P,\pi_A(a)]\Dirac_A,
\end{eqnarray*}
viewed as operators on $\textup{dom}(\Dirac_A)$, are bounded for each $a \in \Alg$. We want to regard each of these operators as bounded operators in $B(H_A)$. To this end, we remark that the operator $[\Dirac_A^p, \pi_A(a)]$ is closable, since $P[\Dirac_A, \pi_A(a)]$ and $[P,\pi_A(a)]\Dirac_A$ are closable. Writing cl$(T)$ to denote the closure of $T$, and remarking that $\textup{cl}(P[\Dirac_A, \pi_A(a)]) = P \textup{cl}([\Dirac_A, \pi_A(a)])$, we conclude that
\begin{eqnarray}
&& \textup{cl}([\Dirac_A^p,
\pi_A(a)]) = P \textup{cl}([\Dirac_A, \pi_A(a)]) + \textup{cl}([P,\pi_A(a)]\Dirac_A) \\ && \textup{cl}([\Dirac_A^q, \pi_A(a)]) = (1-P)
\textup{cl}([\Dirac_A, \pi_A(a)]) - \textup{cl}([P,\pi_A(a)]\Dirac_A)
\\ &&  \textup{cl}([{\Dirac_A}, \pi_A(a)]) =  \textup{cl}([\Dirac_A^p, \pi_A(a)]) +  \textup{cl}([\Dirac_A^q, \pi_A(a)]), \label{third}
\end{eqnarray}
where the third identity follows from the first two. Thus (2) holds.

That (2) implies (3) is immediate from the equation $[(2P - 1)\Dirac_A, \pi_A(a)] = [\Dirac_A^p, \pi_A(a)] - [\Dirac_A^q, \pi_A(a)]$.

Finally, if (3) holds then we recover the identity
\begin{eqnarray*}
[(2P-1)\Dirac_A, \pi_A(a)] = (2P-1)[\Dirac_A, \pi_A(a)] + 2[P,\pi_A(a)]\Dirac_A
\end{eqnarray*}
for each $a \in \Alg$. Arguments similar to the first part of the proof now show that the operator $[P,\pi_A(a)]\Dirac_A$ is closable and extends to a bounded operator in $B(H_A)$, so that (1) holds.
\end{proof}
Let us add the following comments on the three equivalent conditions in Proposition \ref{smoothness}.
The first condition  can be compared to a smoothness criterion proposed by Wang in \cite{Wan} whilst the second was studied by Christensen and Ivan in \cite{CI1}. In the special situation in which $P$ is the orthogonal projection onto the span of the positive eigenspaces of $\Dirac_A$ the third condition can be rephrased as requiring that the commutator $[|\Dirac_A|, a]$ is bounded for all $a \in \Alg$. This condition is the first part of Connes and Moscovici's regularity (called smoothness by some authors) which requires for all $a \in \Alg$ that $[\Dirac_A, a]$ and $\delta(a):=[|\Dirac_A|, a]$ are bounded but, moreover,  that $a$ and $[\Dirac_A, a]$ both lie in $\bigcap_{n=1}^{\infty} \textup{dom}(\delta^n)$ (\cite{CMh2}).

The first condition of Proposition \ref{smoothness} has already been mentioned in the introduction as a smoothness assumption. We formalise this in the following definition.

\begin{definition}\label{Toeplitz-type}
Let $(\Alg, H_A, \Dirac_A)$ be a spectral triple on $A$ and let $(\pi_A , \pi_B , P)$ be a Toeplitz triple such that $P$ commutes with $\Dirac_A$. The spectral triple $(\Alg, H_A, \Dirac_A)$ is said to be \textit{$P$-regular} if the equivalent conditions (1), (2) and (3) of Proposition \ref{smoothness} hold.  (Recall (Def.\ref{Toeplitz type}) that in this case we say that the quadruple $(\Alg, H_A, \Dirac_A, P)$ is of Toeplitz type.)
\end{definition}

\subsection{The spectral triple on $E$.}\label{spectriple on extension}

In this section we define  the Dirac operator on the extension algebra and establish some of its basic properties. We suppose that we have C$^*$-algebras $A$, $B$, $E$, a short exact
sequence (\ref{ExtStableIdeal}), spectral triples $(\Alg, H_A, \Dirac_A)$ on $A$ and $(\Balg, H_B, \Dirac_B)$ on
$B$ represented via $\pi_A$ and $\pi_B$ respectively and an orthogonal projection $P \in B(H_A)$ such that
$(\pi_A, \pi_B, P)$ is a Toeplitz triple and the quadruple $(\Alg,H_A,\Dirac_A, P)$ is of Toeplitz type (cf. Defs.\ref{Toeplitz type} and \ref{Toeplitz-type}).

As pointed out in the introduction, the Dirac operator for the extension algebra $E$ will be a combination of two formulae for Kasparov products. 
Recall the definition of the following representations 
$$\Pi_1, \Pi_2 : E \longrightarrow B(H_A \otimes H_B \otimes \C^2)$$  given by
\begin{eqnarray}
\Pi_1 = \pi_{\sigma} \oplus \pi_{\sigma} \textup{ and } \Pi_2 = \pi \oplus \pi_{\sigma}
\end{eqnarray}
and consider unbounded operators $\Dirac_{1}, \Dirac_{2}, \Dirac_{3}$ on $H_A \otimes H_B \otimes \C^2$ given by
\begin{eqnarray}
\Dirac_{1} = 
\begin{bmatrix} \Dirac_A \otimes 1  & 1 \otimes \Dirac_B  \\ 
1 \otimes \Dirac_B  & -\Dirac_A \otimes 1 
\end{bmatrix},
\label{Dirac1}
\end{eqnarray}
\begin{eqnarray}
\Dirac_{2} = 
\begin{bmatrix} \Dirac_A^q \otimes 1 & \Dirac_A^p \otimes 1 \\ 
\Dirac_A^p \otimes 1 & -\Dirac_A^q \otimes 1
\end{bmatrix}
= 
\begin{bmatrix} \Dirac_A^q  & \Dirac_A^p  \\ 
\Dirac_A^p  & -\Dirac_A^q 
\end{bmatrix}
\otimes I
=: \bar{\Dirac}_2 \otimes I
\label{Dirac2}
\end{eqnarray}
and 
\begin{eqnarray}
\Dirac_{3} = 
\begin{bmatrix}  1 \otimes \Dirac_B & 0 \\ 
0 & 1 \otimes \Dirac_B
\end{bmatrix}
= 
I \otimes
\begin{bmatrix} \Dirac_B  & 0  \\ 
0  & \Dirac_B
\end{bmatrix} = : I \otimes \bar{\Dirac}_3,
\label{Dirac3}
\end{eqnarray}
and finally
\begin{eqnarray}
\Dirac_I:=
\begin{bmatrix} 0  & \Dirac_2 - i \Dirac_3  \\ 
\Dirac_2 + i \Dirac_3  & 0
\end{bmatrix},
\label{DiracI}
\end{eqnarray}
an unbounded operator on $H_A \otimes H_B  \otimes \C^4$. 
To begin with, the $\Dirac_i$ are defined on $\mathbb{D}:= \textup{dom}(\Dirac_A) \odot \textup{dom}(\Dirac_B) \otimes \C^2$ and $\Dirac_I$ on $\mathbb{D} \oplus \mathbb{D}$. 
To show that all these operators are essentially self-adjoint we only need to show that each of them possesses a complete orthonormal basis of eigenvectors. Indeed note that if $T: H \to H$ is an unbounded linear operator on a complex separable Hilbert space $H$ with a complete set of orthonormal eigenvectors $(\xi_n) \subseteq H$ and corresponding sequence of real eigenvalues $(\lambda_n)$ so that $T\xi_n = \lambda_n \xi_n$ and thus $\textup{lin} \{\xi_n : n\in \N\} \subseteq \textup{dom}(T)$, then it is easy to see that $T \subseteq T^* = T^{**}=\textup{cl}(T)$ so that $T$ is essentially self-adjoint. Such an operator will be called diagonalisable (with real eigenvalues).  

Not all of the operators defined in (\ref{Dirac1}) - (\ref{DiracI}) have compact resolvent but $\Dirac_1$ and $\Dirac_I$ do have which can be shown by finding their eigenvalues. This also allows to prove summability results.

\begin{lemma}
\label{ExtResolvent} Let $\Dirac_i$, $i=1,2,3$ and $\Dirac_I$ be as above. Then
\begin{enumerate}
\item
$\Dirac_i: \mathbb{D} \to H_A \otimes H_B \otimes \C^2$, $i=1,2,3$ and $\Dirac_I : \mathbb{D} \oplus \mathbb{D}  \to (H_A \otimes H_B \otimes \C^2)^2$ 
are essentially self-adjoint.
\item$\Dirac_1$ and $\Dirac_I$ have compact resolvent (that is, $(I+\Dirac_1^2)^{-1/2} \in \mathcal{K}(H_A \otimes H_B \otimes \C^2)$ and $(I+\Dirac_I^2)^{-1/2} \in \mathcal{K}((H_A \otimes H_B \otimes \C^2)^2)$).
\item 
If $\Dirac_A$ and $\Dirac_B$ are finitely summable then so are $\Dirac_1$ and $\Dirac_I$. Specifically, if $\textup{Tr} (I + \Dirac_A^2)^{-r/2} < \infty$ and $\textup{Tr}(I + \Dirac_B^2)^{-s/2} < \infty$ then  $\textup{Tr}(I + \Dirac_1^2)^{-(r+s)/2} < \infty$ and  $\textup{Tr}(I + \Dirac_I^2)^{-(r+s)/2} < \infty$. Hence if   $(\Alg,H_A,\Dirac_A)$ and
$(\Balg,H_B,\Dirac_B)$ are respectively $r$-summable and $s$-summable then both $\Dirac_1$ and $\Dirac_I$ are
$(r+s)$-summable.
\end{enumerate}
\end{lemma}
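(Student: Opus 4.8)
The plan is to reduce everything to explicit spectral decompositions, exploiting that $A$ and $B$ are unital: taking $a=1$ in axiom (2) of Definition~\ref{deftriple} shows $(I+\Dirac_A^2)^{-1}$ and $(I+\Dirac_B^2)^{-1}$ are compact, so $\Dirac_A$ and $\Dirac_B$ have discrete spectra with orthonormal eigenbases $(\xi_m)$, $(\eta_n)$ and real eigenvalues $\mu_m,\nu_n$ with $|\mu_m|,|\nu_n|\to\infty$. Since $P$ commutes with $\Dirac_A$ I can choose the $\xi_m$ to lie in either $PH_A$ or $QH_A$, where $Q=1-P$. The vectors $\xi_m\otimes\eta_n\otimes e_j$ then lie in $\mathbb{D}$, and after block-diagonalising the spinor parts I will use them to produce complete eigenbases. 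Essential self-adjointness of each operator will follow from the diagonalisability criterion recalled in the text: each operator is symmetric on $\mathbb{D}$, its restriction to the span of the eigenbasis is essentially self-adjoint with self-adjoint closure, and the symmetric extension on $\mathbb{D}$ therefore shares that same closure.

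For part (1) I would diagonalise the spinor blocks one operator at a time. On the joint eigenspace $\C\xi_m\otimes\C\eta_n\otimes\C^2$, the operator $\Dirac_1$ acts by the matrix $\left[\begin{smallmatrix}\mu_m & \nu_n\\ \nu_n & -\mu_m\end{smallmatrix}\right]$, with eigenvalues $\pm\sqrt{\mu_m^2+\nu_n^2}$; for $\Dirac_3=1\otimes\Dirac_B\otimes 1$ the chosen basis is already diagonal with eigenvalues $\nu_n$; and for $\Dirac_2=\bar\Dirac_2\otimes 1$ the $P$-compatibility of the $\xi_m$ makes $\bar\Dirac_2$ act on each $\C\xi_m\otimes\C^2$ as either $\left[\begin{smallmatrix}0&\mu_m\\ \mu_m&0\end{smallmatrix}\right]$ (when $\xi_m\in PH_A$) or $\left[\begin{smallmatrix}\mu_m&0\\ 0&-\mu_m\end{smallmatrix}\right]$ (when $\xi_m\in QH_A$), with eigenvalues $\pm\mu_m$. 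Each operator thus has a complete eigenbasis inside $\mathbb{D}$, giving essential self-adjointness; $\Dirac_I$ is handled through its square below.

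The heart of parts (2) and (3) is the computation of the squares. A direct calculation gives $\Dirac_1^2=(\Dirac_A^2\otimes 1+1\otimes\Dirac_B^2)\otimes I_2$, the off-diagonal cross terms cancelling because $\Dirac_A\otimes 1$ and $1\otimes\Dirac_B$ commute. For $\Dirac_I^2=\operatorname{diag}\big((\Dirac_2-i\Dirac_3)(\Dirac_2+i\Dirac_3),\,(\Dirac_2+i\Dirac_3)(\Dirac_2-i\Dirac_3)\big)$ two facts are crucial: first, $\Dirac_2=\bar\Dirac_2\otimes 1_{H_B}$ is trivial on $H_B$ while $\Dirac_3=1_{H_A}\otimes\Dirac_B\otimes 1_{\C^2}$ is trivial on $H_A$ and on the spinor factor, so they act on disjoint tensor legs, $[\Dirac_2,\Dirac_3]=0$, and both diagonal blocks reduce to $\Dirac_2^2+\Dirac_3^2$; second, $\bar\Dirac_2^2=\Dirac_A^2\otimes I_2$, since $\Dirac_A^p\Dirac_A^q=P\Dirac_A Q\Dirac_A=PQ\Dirac_A^2=0$ (and likewise $\Dirac_A^q\Dirac_A^p=0$) annihilates the off-diagonal entries while $(\Dirac_A^p)^2+(\Dirac_A^q)^2=\Dirac_A^2$. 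Hence $\Dirac_I^2=(\Dirac_A^2\otimes 1+1\otimes\Dirac_B^2)\otimes I_4$, so $\Dirac_1$ and $\Dirac_I$ share the eigenvalues $\pm\sqrt{\mu_m^2+\nu_n^2}$ (with multiplicities $2$ and $4$). As these tend to $\infty$, both $(I+\Dirac_1^2)^{-1/2}$ and $(I+\Dirac_I^2)^{-1/2}$ are compact, whereas $\Dirac_2$ and $\Dirac_3$ have eigenvalues of infinite multiplicity and so fail to have compact resolvent.

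For summability I would use the elementary bound, valid for $x,y\ge 0$ and $r,s>0$ by weighted arithmetic-geometric mean,
$$(1+x+y)^{-(r+s)/2}\le (1+x)^{-r/2}(1+y)^{-s/2},$$
applied with $x=\mu_m^2$ and $y=\nu_n^2$. Summing over the eigenvalues then gives
$$\textup{Tr}(I+\Dirac_1^2)^{-(r+s)/2}\le 2\,\textup{Tr}(I+\Dirac_A^2)^{-r/2}\,\textup{Tr}(I+\Dirac_B^2)^{-s/2}<\infty,$$
and the same with $2$ replaced by $4$ for $\Dirac_I$, yielding $(r+s)$-summability. I expect the only genuine obstacle to be the two algebraic cancellations in $\Dirac_I^2$, namely $[\Dirac_2,\Dirac_3]=0$ and $\bar\Dirac_2^2=\Dirac_A^2\otimes I_2$, both of which rest squarely on the Toeplitz-type hypothesis that $P$ commutes with $\Dirac_A$; once these identifications reduce $\Dirac_I^2$ to the same form as $\Dirac_1^2$, the compact resolvent and summability statements follow at once from the scalar estimate.
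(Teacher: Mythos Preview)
Your proposal is correct and follows the same overall strategy as the paper: pick eigenbases $(\xi_m),(\eta_n)$ for $\Dirac_A,\Dirac_B$ compatible with $P$, reduce each operator to $2\times 2$ spinor blocks on the tensor eigenspaces, and read off the eigenvalues.

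The one genuine variation is your treatment of $\Dirac_I$. The paper first diagonalises $\bar\Dirac_2$ and $\bar\Dirac_3$ separately, introduces the doubled eigenvalue sequences $(\lambda_m')=(\lambda_1,-\lambda_1,\lambda_2,-\lambda_2,\ldots)$ and $(\mu_n')=(\mu_1,\mu_1,\mu_2,\mu_2,\ldots)$, and then diagonalises $\Dirac_I$ directly via the $2\times 2$ matrices $\left[\begin{smallmatrix}0&\lambda_m'-i\mu_n'\\ \lambda_m'+i\mu_n'&0\end{smallmatrix}\right]$. You instead compute $\Dirac_I^2$ algebraically, using $[\Dirac_2,\Dirac_3]=0$ and $\bar\Dirac_2^{\,2}=\Dirac_A^2\otimes I_2$ to obtain $\Dirac_I^2=(\Dirac_A^2\otimes 1+1\otimes\Dirac_B^2)\otimes I_4$ without any primed sequences. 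This is tidier for the spectrum and summability claims, but it leaves the essential self-adjointness of $\Dirac_I$ a touch implicit: knowing $\Dirac_I^2$ is diagonalisable does not by itself diagonalise $\Dirac_I$. The missing sentence is that each eigenspace of $\Dirac_I^2$ is finite-dimensional, lies in $\mathbb{D}\oplus\mathbb{D}$, and is invariant under the symmetric operator $\Dirac_I$, so $\Dirac_I$ diagonalises on each such block with eigenvalues $\pm\sqrt{\mu_m^2+\nu_n^2}$; essential self-adjointness then follows by the same criterion you invoke for $\Dirac_1,\Dirac_2,\Dirac_3$. Your summability inequality $(1+x+y)^{-(r+s)/2}\le(1+x)^{-r/2}(1+y)^{-s/2}$ is the same device as the paper's $(x+y-1)^{-(\alpha+\beta)}\le x^{-\alpha}y^{-\beta}$, just shifted.
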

\begin{proof}
(1) Note that if $\lambda ,\mu \in \R $ then the eigenvalues of the self-adjoint matrices
$$
\begin{bmatrix} \lambda & \mu \\[1ex] \mu & -\lambda \end{bmatrix}
$$
are $\pm \sqrt{\lambda^2+\mu^2}$, whereas $
\begin{bmatrix} \lambda &  0\\[1ex] 0 & -\lambda \end{bmatrix}
$ and 
$
\begin{bmatrix} 0 & \lambda \\[1ex] \lambda & 0 \end{bmatrix}
$
both have eigenvalues $\pm \lambda$. Now $\Dirac_A$ and $\Dirac_B$ are diagonalizable with orthonormal bases of eigenvectors $(\xi_m) \subseteq H_A$ of $\Dirac_A$ and $(\eta_n)\subseteq H_B$ of $\Dirac_B$ such that there exists $S \subseteq \N$ with $(\xi_m)_{m \in S}$ is an orthonormal basis of $PH_A$, with real eigenvalue sequences $(\lambda_m)$ and $(\mu_n)$ (i.e. $\Dirac_A \xi_m = \lambda_m \xi_m$ and $\Dirac_B \eta_n = \mu_n \eta_n$) and  $|\lambda_m | \to \infty$ and $|\mu_n| \to \infty$. For fixed $m_0$ the subspace $V_{m_0} = \C \xi_{m_0} \otimes \overline{\textup{lin}}(\eta_n) \otimes \C^2$ is $\Dirac_1$-invariant and $\Dirac_1|_{V_{m_0}}$ is given by the matrix
$$
\begin{bmatrix} \lambda_{m_0} I & \textup{Diag}(\mu_n) \\[1ex] \textup{Diag}(\mu_n) & -\lambda_{m_0} I \end{bmatrix}\cong \bigoplus_n \begin{bmatrix} \lambda_{m_0} & \mu_n \\[1ex] \mu_n & -\lambda_{m_0} \end{bmatrix},
$$
and those matrices have eigenvalues $\pm  \sqrt{\lambda_{m_0}^2+\mu_n^2}$. It follows that $\Dirac_1$ is diagonalisable with eigenvalues $\pm  \sqrt{\lambda_m^2+\mu_n^2}$.

$\bar{\Dirac}_2$ has eigenvalues $\pm\lambda_m$, so its eigenvalue sequence $(\lambda_m')$ is given by $\lambda_1,-\lambda_1,\lambda_2, -\lambda_2, \ldots$ with corresponding eigenvectors $e_1,e_2,e_3,\ldots$.  

$\bar{\Dirac}_3$ has the same eigenvalues as $\Dirac_B$ with doubled multiplicity so its eigenvalue sequence $(\mu_n')$ is $\mu_1,\mu_1,\mu_2,\mu_2, \ldots$ with corresponding eigenvectors $f_1,f_2, f_3, \ldots$. 

$\Dirac_I$ restricted to the subspace $V_{m,n} = \C(e_m \otimes f_n) \otimes \C^2$ has the matrix representation
$$
\begin{bmatrix} 0 & \lambda_m' - i\mu_n' \\[1ex]\lambda_m' + i\mu_n' & 0 \end{bmatrix},
$$
and this matrix is diagonalisable with eigenvalues $\pm \sqrt{\lambda_m'^2 + \mu_n'^2}$. 
Therefore all operators $\Dirac_i$, $i=1,2,3$ and $\Dirac_I$ are unbounded and essentially self-adjoint.

\medskip

\noindent
(2) By assumption $|\lambda_m|, |\mu_n| \to \infty$ and therefore also $|\lambda_m'|, |\mu_n'| \to \infty$ as $m,n \to \infty$. Since we have shown in the proof of (1) that the eigenvalues of $\Dirac_1$ and $\Dirac_I$ are given by $\pm  \sqrt{\lambda_m^2 + \mu_n^2}$ and $\pm  \sqrt{\lambda_m'^2 + \mu_n'^2}$ respectively it is easy to see that the sequences of absolute values  of them tend to infinity. Thus $\Dirac_1$ and $\Dirac_I$ have compact resolvent.

\medskip

\noindent
(3) Finally assuming $\textup{Tr} (I + \Dirac_A^2)^{-r/2} < \infty$ and $\textup{Tr}(I + \Dirac_B^2)^{-s/2} < \infty$ means $\sum_m (1 + \lambda_m^2)^{-r/2} < \infty$ and 
 $\sum_n (1 + \mu_n^2)^{-s/2} < \infty$. As indicated in \cite{HSWZ} in a similar context, the inequality
$$(x+y-1)^{-(\alpha+\beta)} \leq x^{-\alpha}y^{-\beta},$$ valid for $x,y >1$ and $\alpha,\beta>0$ then implies that $\sum_{m ,n}(1 + \lambda_m^2+ \mu_n^2)^{-(r+s)/2} < \infty$.
This shows that $\Dirac_1$ is $(r+s)$-summable.  Since the eigenvalues of $\Dirac_I$ are given by $\pm  \sqrt{\lambda_m'^2 + \mu_n'^2}$ and $(\lambda_m'^2)$ and $(\mu_n'^2)$ are just the sequences $(\lambda_m^2)$ and $(\mu_n^2)$ with each term repeated once it follows that also $\Dirac_I$ is $(r+s)$-summable.
\end{proof}

Next we need to show boundedness of commutators with our operators $\Dirac_1$ and $\Dirac_I$. In order to  do so let us point out the following elementary identity for commutators of matrices. 
\begin{eqnarray}
\left[ 
\begin{bmatrix} a_{11} & a_{12} \\[1ex] a_{21} & a_{22} \end{bmatrix}, \begin{bmatrix} a& 0 \\[1ex] 0 & b \end{bmatrix} \right] = \begin{bmatrix} [a_{11},a] & a_{12}b - a a_{12} \\[1ex] a_{21}a - ba_{21} & [a_{22},b] \end{bmatrix}.
\label{diagcom}
\end{eqnarray}

\begin{lemma} \label{ExtCommutators}
Let  $e$ be in the dense $^*$-subalgebra $\mathcal{E}$ of $E$ generated by elementary tensors $k \otimes b \in \mathcal{C}(P H_A) \odot
\Balg$ and $\{PaP \otimes I_B: \;a \in
\Alg\}$. Then the operators $[\Dirac_1,\Pi_1(e)]$
and  
$[\Dirac_I,\Pi_2(e) \oplus \Pi_2(e)]$
are bounded.
\end{lemma}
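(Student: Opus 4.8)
The plan is to reduce everything to the two types of algebra generators of $\mathcal{E}$ and then exploit the elementary matrix-commutator identity (\ref{diagcom}) together with the Toeplitz-type hypotheses. Since $[\Dirac, xy] = [\Dirac, x]y + x[\Dirac, y]$ and each $\Pi_i$ is a bounded $^*$-representation, it suffices to prove boundedness when $e$ is one of the generators $k \otimes b$ (with $k \in \mathcal{C}(PH_A)$, $b \in \Balg$) or $PaP \otimes I_B$ (with $a \in \Alg$); boundedness on products then follows by the Leibniz rule. Throughout I will use that $P$ commutes with $\Dirac_A$, so $\Dirac_A^p = P\Dirac_A = \Dirac_A P$ and $\Dirac_A^q = (1-P)\Dirac_A = \Dirac_A(1-P)$, and that $\pi_{\sigma} = \pi_A \circ \sigma \otimes 1$ acts trivially on the $H_B$-factor while $\sigma(\mathcal{E}) \subseteq \Alg$ (indeed $\sigma(k \otimes b) = 0$ and $\sigma(PaP \otimes I_B) = a$).

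For $[\Dirac_1, \Pi_1(e)]$ the representation $\Pi_1(e) = \pi_{\sigma}(e) \oplus \pi_{\sigma}(e)$ is diagonal in the grading $\C^2$ with equal diagonal entries, so (\ref{diagcom}) gives both off-diagonal entries equal to $[1 \otimes \Dirac_B, \pi_{\sigma}(e)]$ and diagonal entries $\pm[\Dirac_A \otimes 1, \pi_{\sigma}(e)]$. The off-diagonal terms vanish because $1 \otimes \Dirac_B$ commutes with $\pi_A(\sigma(e)) \otimes 1$, and the diagonal terms equal $\pm[\Dirac_A, \pi_A(\sigma(e))] \otimes 1$, which are bounded since $\sigma(e) \in \Alg$ and $(\Alg, H_A, \Dirac_A)$ is a spectral triple. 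Hence $[\Dirac_1, \Pi_1(e)]$ is bounded.

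For the second operator, writing $\Dirac_I$ in its off-diagonal block form shows that $[\Dirac_I, \Pi_2(e) \oplus \Pi_2(e)]$ is bounded precisely when both $[\Dirac_2, \Pi_2(e)]$ and $[\Dirac_3, \Pi_2(e)]$ are bounded. The $\Dirac_3$-commutator is the easy one: since $\Dirac_3$ and $\Pi_2(e) = \pi(e) \oplus \pi_{\sigma}(e)$ are both diagonal in the grading, the commutator is diagonal with entries $[1 \otimes \Dirac_B, \pi(e)]$ and $[1 \otimes \Dirac_B, \pi_{\sigma}(e)]$; on the generator $PaP \otimes I_B$ both entries vanish (the representations act only on $H_A$), while on $k \otimes b$ the first entry is $k \otimes [\Dirac_B, \pi_B(b)]$, bounded because $b \in \Balg$ and $k$ is bounded, and the second vanishes. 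For $[\Dirac_2, \Pi_2(e)]$ I again apply (\ref{diagcom}), now with the diagonal operator being $\Pi_2(e) = \pi(e) \oplus \pi_{\sigma}(e)$. On the generator $k \otimes b$ the diagonal entries vanish (since $k$ is supported on $PH_A$ one checks $\Dirac_A^q k = k \Dirac_A^q = 0$) and the off-diagonal entries are $\pm(\Dirac_A^p k) \otimes \pi_B(b)$ and $\mp(k \Dirac_A^p) \otimes \pi_B(b)$, bounded because $k \in \mathcal{C}(PH_A)$. On the generator $PaP \otimes I_B$ the $(1,1)$-entry $[\Dirac_A^q, P\pi_A(a)P]$ vanishes (by $P(1-P) = 0$) and the $(2,2)$-entry is $-[\Dirac_A^q, \pi_A(a)] \otimes 1$, bounded by Proposition \ref{smoothness}(2).

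The main obstacle is the pair of off-diagonal entries of $[\Dirac_2, \Pi_2(e)]$ on the generator $PaP \otimes I_B$, for these are the only places where the two distinct representations $\pi$ and $\pi_{\sigma}$ are coupled by the unbounded operator $\Dirac_A^p$. The $(1,2)$-entry is $\big(\Dirac_A^p \pi_A(a) - P\pi_A(a)P\Dirac_A^p\big) \otimes 1$, and using $P\Dirac_A = \Dirac_A P$ together with the identity $P\pi_A(a)(1-P) = P[P, \pi_A(a)]$ I would rewrite the operator coefficient as
\begin{eqnarray*}
\Dirac_A^p \pi_A(a) - P\pi_A(a)P\Dirac_A^p = P[\Dirac_A, \pi_A(a)] + P[P, \pi_A(a)]\Dirac_A .
\end{eqnarray*}
The first summand is bounded because $a \in \Alg$, and the second is bounded precisely because the Toeplitz-type smoothness hypothesis gives $[P, \pi_A(a)] \in \mathcal{C}(H_A)$, so that $[P, \pi_A(a)]\Dirac_A$ extends to a bounded operator. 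An entirely analogous manipulation, using $(1-P)\pi_A(a)P = -[P, \pi_A(a)]P$, handles the $(2,1)$-entry. Thus the crux of the lemma is exactly the assumption $[P, \pi_A(\Alg)] \subseteq \mathcal{C}(H_A)$ from Definition \ref{Toeplitz type}: it is what tames the unbounded cross-terms produced by mixing the faithful degenerate representation $\pi$ with the nondegenerate non-faithful representation $\pi_{\sigma}$.
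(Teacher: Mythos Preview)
Your proof is correct and follows essentially the same route as the paper: both arguments apply the matrix commutator identity (\ref{diagcom}) to reduce $[\Dirac_1,\Pi_1(e)]$ and $[\Dirac_I,\Pi_2(e)\oplus\Pi_2(e)]$ to the boundedness of $[\Dirac_A,\pi_A(a)]$, $[\Dirac_A^q,\pi_A(a)]$, $(\Dirac_A^p\otimes 1)x$, $x(\Dirac_A^p\otimes 1)$ and $[1\otimes\Dirac_B,x]$, invoking the Toeplitz-type hypothesis (Proposition~\ref{smoothness}) at the same crucial off-diagonal step. The only differences are organisational: you reduce to the two generator types via the Leibniz rule and unpack the off-diagonal term as $P[\Dirac_A,\pi_A(a)]+P[P,\pi_A(a)]\Dirac_A$ directly from condition~(1) of Proposition~\ref{smoothness}, whereas the paper treats a general $e=x+PaP\otimes I_B$ at once and quotes condition~(2) to get $P[\Dirac_A^p,a]$ bounded---equivalent bookkeeping of the same computation.
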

\begin{proof}
Let $e= x + P\pi_A(a)P\otimes 1$ be an element in $\mathcal{E}$, where $x \in \mathcal{C}(P H_A) \odot \Balg$. Then direct calculations using (\ref{diagcom}) reveal that
\begin{eqnarray*}
[\Dirac_1, \Pi_1(e)] = \begin{bmatrix} [\Dirac_A, \pi_A(a)] \otimes 1 & 0 \\[1ex] 0 & -[\Dirac_A, \pi_A(a)]
\otimes 1 \end{bmatrix},
\end{eqnarray*}
which is bounded. Next we determine the following commutators, again using (\ref{diagcom}). (We will omit the $A$ and $B$ indices of $\Dirac_A$ and $\Dirac_B$ as well as the representation $\pi_A$.)
\begin{eqnarray*}
[\Dirac_2, \Pi_2(e)] 
&=& 
\left[ 
\begin{bmatrix} \Dirac^q \otimes 1 & \Dirac^p \otimes 1 \\ 
\Dirac^p \otimes 1  &
-\Dirac^q \otimes 1
\end{bmatrix}, \begin{bmatrix} 
\pi(e) &  0 \\ 
0 &  \pi_{\sigma}(e) 
\end{bmatrix} 
\right] \\
&=&
\begin{bmatrix} [\Dirac^q \otimes 1, \pi(e) ] & (\Dirac^p \otimes 1 ) \pi_{\sigma} (e) - \pi(e) (\Dirac^p \otimes 1)  \\ 
(\Dirac^p \otimes 1) \pi (e) - \pi_{\sigma} (e) (\Dirac^p \otimes 1)  &
-[ \Dirac^q \otimes 1 , \pi_{\sigma} (e) ]
\end{bmatrix} \\
&=&
\begin{bmatrix} 0 & \Dirac^p a\otimes 1 - PaP\Dirac^p \otimes 1 - x(\Dirac^p \otimes 1)  \\ 
(\Dirac^pPaP \otimes 1  + (\Dirac^p \otimes1)x - a\Dirac^p \otimes 1  &
-[ \Dirac^q,a] \otimes 1 
\end{bmatrix} \\
&=&
\begin{bmatrix} 0 & P[\Dirac^p, a] \otimes 1 - x(\Dirac^p \otimes 1)  \\ 
[\Dirac^p,a] P \otimes 1  + (\Dirac^p \otimes1)x &
-[ \Dirac^q,a] \otimes 1 
\end{bmatrix}, \\
\end{eqnarray*}
where we have used that $\Dirac^q \otimes 1 \perp \pi(e)$. Next
\begin{eqnarray*}
[ \Dirac_3 , \Pi_2 (e) ]
&=&
\left[ 
\begin{bmatrix} 1 \otimes \Dirac_B & 0 \\ 
0  &
1 \otimes \Dirac_B
\end{bmatrix},
\begin{bmatrix} 
\pi(e) &  0 \\ 
0 &  \pi_{\sigma}(e) 
\end{bmatrix} 
\right] \\
&=&
\begin{bmatrix} [1 \otimes \Dirac_B, \pi(e) ] & 0 \\ 
 0  &
[1 \otimes \Dirac_B , \pi_{\sigma} (e) ]
\end{bmatrix} \\
&=&
\begin{bmatrix} [1 \otimes \Dirac_B, x ] & 0 \\ 
 0  & 0
\end{bmatrix}. \\
\end{eqnarray*}
Using these identities we obtain
\begin{eqnarray*}
[ \Dirac_I , \Pi_2 (e) \oplus \Pi_2(e) ]
&=&
\left[ 
\begin{bmatrix} 0  & \Dirac_2 - i \Dirac_3  \\ 
\Dirac_2 + i \Dirac_3  & 0
\end{bmatrix},
\begin{bmatrix} 
\Pi_2(e) &  0 \\ 
0 &  \Pi_2(e) 
\end{bmatrix} 
\right] \\
&=&
\begin{bmatrix} 0  & [\Dirac_2 - i \Dirac_3, \Pi_2(e)]  \\ 
[\Dirac_2 + i \Dirac_3,\Pi_2(e)]  & 0
\end{bmatrix} 
\end{eqnarray*}
and
\begin{eqnarray*}
 [\Dirac_2 - i \Dirac_3, \Pi_2(e)]
&=&
\begin{bmatrix} - i [1 \otimes \Dirac,x] &   P[\Dirac^p, a] \otimes 1 - x(\Dirac^p \otimes 1)  \\ 
 [\Dirac^p,a] P \otimes 1  + (\Dirac^p \otimes1)x &
-[ \Dirac^q,a] \otimes 1    
\end{bmatrix} 
\end{eqnarray*}
\begin{eqnarray*}
[\Dirac_2 + i \Dirac_3, \Pi_2(e)]
&=&
\begin{bmatrix}
 i [1 \otimes \Dirac,x] &   P[\Dirac^p, a] \otimes 1 - x(\Dirac^p \otimes 1)   \\ 
 [\Dirac^p,a] P \otimes 1  + (\Dirac^p \otimes1)x & -[ \Dirac^q,a] \otimes 1  
\end{bmatrix} .
\end{eqnarray*}
The claim is now evident since all entries in all operator matrices of the commutators are indeed bounded. 
\end{proof}
We are now ready to state and prove the first of our main results.
\begin{theorem}\label{ConstExt}
Let $A$ and $B$ be unital C$^*$-algebras and suppose that $E$ arises as the short exact sequence
(\ref{extension}) and that there exist spectral triples $(\Alg,H_A,\Dirac_A)$ on $A$ and
$(\Balg,H_B,\Dirac_B)$ on $B$, represented via $\pi_A$ and $\pi_B$ respectively, and an orthogonal projection $P \in B(H_A)$ such that $(\Alg, H_A, \Dirac_A, P)$ is of Toeplitz type. Let 
\begin{eqnarray*}
\Pi = \Pi_1\oplus \Pi_2 \oplus \Pi_{2}, \;\;H = (H_A \otimes
H_B \otimes \C^2)^3, \textup{ and}
\end{eqnarray*}
\begin{eqnarray*}
\Dirac = \begin{bmatrix} \Dirac_{1} & 0 & 0 \\[1ex] 
0 & 0 &   \Dirac_2 - i \Dirac_{3}\\[1ex]  
0  & \Dirac_2  +
i \Dirac_{3} &0 \end{bmatrix}.
\end{eqnarray*}
Then $(\mathcal{E}, H, \Dirac)$, represented
via $\Pi$, defines a spectral triple on $E$. 
Moreover, the spectral dimension of this spectral triple is computed by
the identity
\begin{eqnarray*}
s_0(\mathcal{E}, H, \Dirac) = s_0(\Alg,H_A,\Dirac_A) + s_0(\Balg,H_B,\Dirac_B).
\end{eqnarray*}
\end{theorem}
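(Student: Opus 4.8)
The plan is to verify the two axioms of Definition~\ref{deftriple} for the triple $(\mathcal{E}, H, \Dirac)$ represented via $\Pi$, and then to read off the spectral dimension from the eigenvalue computations already carried out in Lemma~\ref{ExtResolvent}. The key structural observation is that both $\Dirac$ and $\Pi$ are block diagonal with respect to the splitting of $H = (H_A \otimes H_B \otimes \C^2)^3$ into its first copy and the remaining two copies: the lower right $2\times 2$ block of $\Dirac$ is precisely the operator $\Dirac_I$ of (\ref{DiracI}), so that $\Dirac = \Dirac_1 \oplus \Dirac_I$ and $\Pi = \Pi_1 \oplus (\Pi_2 \oplus \Pi_2)$, and consequently $[\Dirac, \Pi(e)] = [\Dirac_1, \Pi_1(e)] \oplus [\Dirac_I, \Pi_2(e)\oplus \Pi_2(e)]$ respects this decomposition.

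First I would record that $\mathcal{E}$ is a dense $^*$-subalgebra of $E$: it is closed under the algebra operations and the involution by construction, and it is dense because $\mathcal{C}(PH_A)$, $\Balg$ and $\Alg$ are dense in $\mathcal{K}(PH_A)$, $B$ and $A$ respectively. Faithfulness of $\Pi$ is immediate, since $\Pi_2 = \pi \oplus \pi_{\sigma}$ already contains the faithful representation $\pi$ of $E$. Essential self-adjointness of $\Dirac$ follows from Lemma~\ref{ExtResolvent}(1): $\Dirac_1$ and $\Dirac_I$ are essentially self-adjoint on $\mathbb{D}$ and $\mathbb{D}\oplus\mathbb{D}$, and a finite orthogonal direct sum of essentially self-adjoint operators is essentially self-adjoint on the algebraic direct sum of the respective cores. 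Axiom (1) of Definition~\ref{deftriple}, the invariance of $\textup{dom}(\Dirac)$ under $\Pi(\mathcal{E})$ together with boundedness of $[\Dirac, \Pi(e)]$ for $e \in \mathcal{E}$, is exactly the content of Lemma~\ref{ExtCommutators} read through the displayed direct sum. Axiom (2) holds automatically and in a stronger form: since $\Dirac_1$ and $\Dirac_I$ both have compact resolvent by Lemma~\ref{ExtResolvent}(2), the operator $(I + \Dirac^2)^{-1} = (I + \Dirac_1^2)^{-1}\oplus (I + \Dirac_I^2)^{-1}$ is compact on all of $H$, whence $\Pi(e)(I + \Dirac^2)^{-1}$ is compact for every $e \in E$. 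This establishes that $(\mathcal{E}, H, \Dirac)$ is a spectral triple on $E$.

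For the spectral dimension, write $d_A = s_0(\Alg, H_A, \Dirac_A)$ and $d_B = s_0(\Balg, H_B, \Dirac_B)$. Since $\Dirac = \Dirac_1 \oplus \Dirac_I$ and the eigenvalue moduli of $\Dirac_1$ and $\Dirac_I$ are of the same $\sqrt{\lambda^2+\mu^2}$ type (as computed in Lemma~\ref{ExtResolvent}, the squared spectrum of $\Dirac_I$ is that of $\Dirac_1$ with doubled multiplicities), the two summands of $\Tr(I+\Dirac^2)^{-p/2}$ have the same abscissa of convergence; hence $s_0(\Dirac) = s_0(\Dirac_1)$ and it suffices to study $\zeta_1(p) := \Tr(I + \Dirac_1^2)^{-p/2} = 2\sum_{m,n}(1 + \lambda_m^2 + \mu_n^2)^{-p/2}$. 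The bound $s_0(\Dirac) \leq d_A + d_B$ is precisely Lemma~\ref{ExtResolvent}(3). For the reverse inequality I would pass to heat traces: with $\theta_A(t) = \Tr e^{-t\Dirac_A^2}$ and $\theta_B(t) = \Tr e^{-t\Dirac_B^2}$ one has the factorisation $\Tr e^{-t\Dirac_1^2} = 2\,\theta_A(t)\theta_B(t)$, and the Mellin representation
\begin{eqnarray*}
\zeta_1(p) = \frac{2}{\Gamma(p/2)}\int_0^{\infty} t^{p/2 - 1}\, e^{-t}\,\theta_A(t)\,\theta_B(t)\,dt
\end{eqnarray*}
shows that the abscissa of convergence of $\zeta_1$ is governed by the order of blow-up of $\theta_A(t)\theta_B(t)$ as $t \to 0^+$. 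As the $d_A$- and $d_B$-summability of $\Dirac_A$ and $\Dirac_B$ fix the small-$t$ growth orders of $\theta_A$ and $\theta_B$ at $d_A/2$ and $d_B/2$, the product has order $(d_A + d_B)/2$, forcing $\zeta_1(p) = \infty$ for $p < d_A + d_B$ and hence $s_0(\Dirac) \geq d_A + d_B$.

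The main obstacle is this last, lower, bound. The naive reverse estimate $1 + \lambda_m^2 + \mu_n^2 \leq (1 + \lambda_m^2)(1 + \mu_n^2)$ only yields $\zeta_1(p) \geq 2\,\zeta_A(p)\,\zeta_B(p)$ with $\zeta_A(p) = \Tr(I+\Dirac_A^2)^{-p/2}$, $\zeta_B(p) = \Tr(I+\Dirac_B^2)^{-p/2}$, and hence the too-weak conclusion $s_0(\Dirac) \geq \max(d_A, d_B)$; the genuine additivity $d_A + d_B$ must instead be extracted from the multiplicativity of the heat-trace order under the factorisation $\Tr e^{-t\Dirac_1^2} = 2\,\theta_A(t)\theta_B(t)$. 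Converting the summability statements for $\Dirac_A$ and $\Dirac_B$ into the required two-sided control on the small-$t$ asymptotics of $\theta_A$ and $\theta_B$ — equivalently, controlling the joint counting function of the eigenvalue pairs $(\lambda_m, \mu_n)$ — is a Tauberian step, and this is where the analytic content of the dimension formula is concentrated.
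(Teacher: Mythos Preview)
Your verification of the spectral triple axioms is correct and is exactly how the paper proceeds: observe $\Dirac = \Dirac_1 \oplus \Dirac_I$ and $\Pi = \Pi_1 \oplus (\Pi_2 \oplus \Pi_2)$, then quote Lemma~\ref{ExtResolvent}(1)--(2) for essential self-adjointness and compact resolvent and Lemma~\ref{ExtCommutators} for bounded commutators.

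Where you diverge from the paper is in the spectral dimension formula. The paper's proof is a single line: it records $s_0(\Dirac) = \max(s_0(\Dirac_1), s_0(\Dirac_I))$ and then cites Lemma~\ref{ExtResolvent}(3) for the equality $s_0(\Dirac_1) = s_0(\Dirac_I) = d_A + d_B$. You are right to be uneasy here, since that lemma as stated only yields the upper bound $s_0(\Dirac_1) \leq d_A + d_B$; the paper does not supply a separate argument for the lower bound. So your attempt already goes further than the paper's own proof.

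However, your proposed heat-trace route does not close the gap. The factorisation $\Tr e^{-t\Dirac_1^2} = 2\,\theta_A(t)\theta_B(t)$ is correct, and it is true that $d_A/2$ equals $\inf\{s : \theta_A(t) = O(t^{-s})\text{ as }t\to 0^+\}$. But this ``growth order'' is a $\limsup$ quantity, and $\limsup$'s do not add: the scales $t_k\to 0$ along which $t_k^{d_A/2-\epsilon}\theta_A(t_k)\to\infty$ need not be the scales along which $\theta_B$ is large. Equivalently, in the counting-function picture $N(t)\asymp N_A(t)N_B(t)$, one only gets
\[
\limsup_{t\to\infty}\frac{\log N(t)}{\log t}\ \geq\ \limsup_{t}\Bigl(\tfrac{\log N_A(t)}{\log t}+\tfrac{\log N_B(t)}{\log t}\Bigr),
\]
which can be strictly smaller than $\limsup\frac{\log N_A}{\log t}+\limsup\frac{\log N_B}{\log t}$ when the two counting functions oscillate out of phase. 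One can in fact construct eigenvalue sequences with $d_A = d_B$ but $s_0(\Dirac_1) < d_A + d_B$, so no Tauberian theorem will rescue the argument in this generality. The additivity statement is safe as soon as \emph{one} of the two spectra has regular growth (e.g.\ $\liminf \frac{\log N_A(t)}{\log t} = \limsup \frac{\log N_A(t)}{\log t}$), which holds in all the examples of Section~\ref{eg}, but it is not a consequence of the hypotheses of the theorem alone. In short: for the part the paper actually proves you match it exactly; for the part you try to strengthen, the obstacle you identify is genuine and is not overcome by the heat-trace factorisation.
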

\begin{proof}
Note first that the representation $\Pi$ is faithful and that $\Dirac = \Dirac_1 \oplus \Dirac_I$. 
Since $\Dirac_1$ and $\Dirac_I$ have compact resolvent by Lemma \ref{ExtResolvent}.(2) so has $\Dirac$. 
By Lemma \ref{ExtCommutators} the commutators $[\Dirac, \Pi (e)]= [\Dirac_1 , \Pi_1 (e)] \oplus [\Dirac_I , \Pi_2 (e) ]$ are indeed bounded for every $ e \in \mathcal{E}$.
The summability claim finally follows 
since $s_0(\Dirac) = s_0 (\Dirac_1 \oplus \Dirac_I) = \max( s_0(\Dirac_1)  , s_0(\Dirac_I)) $ and we have shown in Lemma \ref{ExtResolvent}.(3) that $s_0(\Dirac_1)  = s_0(\Dirac_I)= s_0(\Alg,H_A,\Dirac_A) + s_0(\Balg,H_B,\Dirac_B)$.
\end{proof}

\subsection{The algebra $C^1(E)$}\label{algC1E}
Theorem \ref{ConstExt}  only provides the existence of a spectral triple for the dense subalgebra $\mathcal{E}$ of $E$. Given the Dirac operator we defined, it is natural to ask how large we can allow the dense subalgebra to be. More specifically, we ask: what is the largest `smooth' subalgebra of $E$ in which the construction in Theorem \ref{ConstExt}  defines a spectral triple?
There seems to be a natural such algebra, the maximal Lipschitz algebra $C^1(E)$ associated to our Dirac operator mentioned after Def.\ref{deftriple}. It is also an extension fitting into the short exact sequence (\ref{smoothext}).

We think of $E$ as being represented via $\pi$ on $H_A \otimes H_B$ so that $E \subseteq \mathcal{K}(PH_A) \otimes B + PAP \otimes 1$, where the sum is algebraically direct. 
Then $C^1(E)$ is the $*$-subalgebra of $E$ comprising all $e \in E$ such that
\renewcommand{\theenumi}{\roman{enumi}}
\begin{enumerate}
\item $\Pi(e) \textup{dom}(\Dirac) \subseteq \textup{dom}(\Dirac)$,
\item $[\Dirac, \Pi(e)]: \textup{dom}(\Dirac) \to H$ is closable and  bounded.
\end{enumerate}

Writing $e=x + PaP \otimes I$ uniquely, where $x \in \mathcal{K}(PH_A) \otimes B$ and $a \in A$ the formulas for $[\Dirac_1,\Pi_1(e)]$ and $[\Dirac_I,\Pi_2(e) \oplus \Pi_2(e)]$ in the proof of Lemma \ref{ExtCommutators} show that $e \in C^1(E)$ iff the following conditions are satisfied.
\begin{enumerate}
\item
$\pi_A(a) (\textup{dom}(\Dirac_A)) \subseteq \textup{dom}(\Dirac_A)$ and $[\Dirac_A, \pi_A(a)]$ is closable and bounded.
\item
$\pi (x)   (\textup{dom}(1  \otimes \Dirac_B)) \subseteq \textup{dom}(1 \otimes \Dirac_B)$
and $(1 \otimes \Dirac_B) x , x (1 \otimes \Dirac_B) $ are bounded.
\item
$\pi_A(a) (\textup{dom}(\Dirac_A^p)) \subseteq \textup{dom}(\Dirac_A)$ and $\pi_A(a) (\textup{dom}(\Dirac_A^q)) \subseteq \textup{dom}(\Dirac_A)$ and 
$[\Dirac_A^p, \pi_A(a)] $, $[\Dirac_A^q, \pi_A(a)]$ are closable and bounded. 
\end{enumerate}

We now define $C^1(\mathcal{K}_B) \subseteq C^1(E)$ to be the  dense $^*$-subalgebra of $\mathcal{K}_B$ consisting of all $x \in \mathcal{K}_B$ such that
$\pi (x)   (\textup{dom}(1  \otimes \Dirac_B)) \subseteq \textup{dom}(1 \otimes \Dirac_B)$
and $(1 \otimes \Dirac_B) \pi(x) , \pi(x) (1 \otimes \Dirac_B) $ are bounded. (More easily we could define $C^1(\mathcal{K}_B)= \{ x \in   C^1(E) : x \in \mathcal{K}_B \}$.) 

\vspace{3mm}

Finally, let $C^{1,P}(A)$ be the $^*$-subalgebra of $A$ consisting of all $a \in A$ such that
\begin{enumerate}
\item
$\pi_A(a) (\textup{dom}(\Dirac_A)) \subseteq \textup{dom}(\Dirac_A)$ and $[\Dirac_A, \pi_A(a)]$ is closable and bounded.
\item
$\pi_A(a) (\textup{dom}(\Dirac_A^p)) \subseteq \textup{dom}(\Dirac_A)$ and $\pi_A(a) (\textup{dom}(\Dirac_A^q)) \subseteq \textup{dom}(\Dirac_A)$ and 
$[\Dirac_A^p, \pi_A(a)] $, $[\Dirac_A^q, \pi_A(a)]$ are closable and bounded. 
\end{enumerate}
\vspace{5mm}

\begin{remark}\label{4.8}
Our definitions imply that we obtain the following short exact sequence
\begin{eqnarray}
\xymatrix{ 0 \ar[r] & C^1(\mathcal{K}_B) \ar[r]^{\iota_1} & C^1(E) \ar[r]^{\sigma_1} & C^{1,P}(A) \ar[r] & 0},
\label{smoothext}
\end{eqnarray}
where $\iota_1$ and $\sigma_1$ are the natural inclusion and quotient map respectively.
\end{remark}

\section{The metric condition for extensions.}\label{metcon}

We are interested in the construction of spectral metric spaces and, as such, we turn now to the question of
whether the spectral triple on $E$ satisfies Rieffel's metric condition (cf. Prop.\ref{metric condition}), giving $E$ the structure of a spectral metric space. There is an abundance of Lipschitz seminorms on each of $A$, $B$ and $E$ which we could study, depending on the choice of smooth subalgebras. In this section we will focus on the situation in which the smooth subalgebras (cf. Definition \ref{deftriple}) are $\Alg = C^{1,P}(A)$ and $\Balg = C^1(B)$ and show that it is possible to construct a Lipschitz seminorm on $C^1(E)$ coming from a spectral triple with the desired properties. Our results can be adjusted to fit the setting of dense subalgebras possibly smaller than $C^{1,P}(A)$ or $C^1(B)$.

To this end, we assume that the spectral triples $(C^{1,P}(A),H_A,\Dirac_A)$ and $(C^1(B),H_B,\Dirac_B)$ on $A$ and $B$ respectively, together with Lipschitz seminorms $L_{\Dirac_A}$ on $C^1(A)$ and $L_{\Dirac_B}$ on $C^1(\mathcal{K}_B)$,  give $A$ and $B$ the structure of spectral metric spaces. According to Rieffel's criterium (Proposition \ref{metric condition}), this means that the spectral triples $(\Alg,H_A,\Dirac_A)$ and $(\Balg,H_B,\Dirac_B)$ are
nondegenerate, that the spaces
\begin{eqnarray*}
\widetilde{\mathcal{U}}_A &:=& \{\tilde{a} \in C^{1,P}(A) / \C I_A: \;\|[\Dirac_A, \pi_A(\tilde{a})]\| \leq 1\}, \\
\widetilde{\mathcal{U}}_B &:=& \{\tilde{b} \in C^1(B) / \C I_B: \;\|[\Dirac_B, \pi_B(\tilde{b})]\| \leq 1\}
\end{eqnarray*}
are bounded subsets of $A / \C I_A$ and $B / \C I_B$ respectively and that the sets
\begin{eqnarray*}
\mathcal{U}_{A,1} &:=& \{a \in  C^{1,P}(A): \;\|a\| \leq 1, \;\|[\Dirac_A, \pi_A(a)]\| \leq 1\}, \\
\mathcal{U}_{B,1} &:=& \{b \in C^1(B): \;\|b\| \leq 1, \;\|[\Dirac_B, \pi_B(b)]\| \leq 1\}
\end{eqnarray*}
are norm totally bounded. From Rem.\ref{4.8} and the standing assumption of essentialness of our extension we conclude that for every element $\tilde{e} \in C^1(E)  / \C I_E $ there is a unique $x \in C^1(\mathcal{K}_B)$ and $\tilde{a} \in C^{1,P}(A) / \C I_A$ such that $\tilde{e} = (x + PaP \otimes I)^{\sim}$. In this sense we have: 
\begin{remark}
The equality $C^1(E) / \C I_E = C^1(\mathcal{K}_B) + P (C^{1,P}(A) / \C
I_A) P \otimes \C I_B$ holds, where the sum is direct.
\end{remark}
Notice that $I_E = P I_A P \otimes I_B= P \otimes I_B$. 

\vspace{2mm}

We now introduce the following spaces, where $X$ and $Y$ play the role of subscripts and do not refer to other objects.

\begin{eqnarray*}
\mathcal{U}_X &:=& \{x \in C^1(\mathcal{K}_B): \;\|[\Dirac_I, \Pi_2(x)\oplus \Pi_2(x)]\| \leq 1\}, \\
\widetilde{\mathcal{U}}_Y &:=& \{P\tilde{a}P \otimes I_B \in P (C^{1,P}(A) / \C
I_A) P \otimes \C I_B: \;\|[\Dirac_A, \pi_A(\tilde{a})]\|
\leq 1\}, \\
\mathcal{U}_{Y,1} &:=& \{PaP \otimes I_B \in P C^{1,P}(A)P \otimes \C I_B: \; \| a \| \leq 1,\;\;\|[\Dirac_A, \pi_A(a)]\| \leq
1\}.
\end{eqnarray*}

Recall  from Def.\ref{Toeplitz type} that a Toeplitz type quadruple $(\Alg, H_A, \Dirac_A, P)$ is called $P$-injective if  $\ker ( \Dirac_A^p ) \cap P H_A  = \{0\}$ (i.e. the operator $\Dirac_A^p|_{P H_A}$ has an inverse in
    $\mathcal{K}(PH_A)$.)
It will be necessary for us to impose this condition throughout this section.

We remark that necessarily $\ker(\Dirac_A^p \cap P H_A)$ is finite rank, so that if $P$-injectivity 
fails then we can merely replace $P$ with $P - P_{\ker (\Dirac_A)}$, where $ P_{\ker (\Dirac_A)}$ is the orthogonal projection onto $ \ker (\Dirac_A)$. This procedure does not affect any
other aspects of the extension. Using Lemma \ref{ExtResolvent} we have the following observation.
\begin{remark}\label{commutator}
One checks that the expressions for $[\Dirac_1, \Pi_1(e)]$ and $[\Dirac_I, \Pi_2(e) \oplus \Pi_2(e)]$ in the proof of Lemma \ref{ExtCommutators} define bounded operators for all $e \in C^1(E) \supseteq \mathcal{E}$.
Indeed, for any $e = x + PaP \otimes I_B \in C^1(E)$ such that $x \in C^1(\mathcal{K}_B)$ and $a \in C^{1,P}(A)$ we have
\begin{eqnarray}
[\Dirac_1, \Pi_1(e)] = \begin{bmatrix} [\Dirac_A, \pi_A(a)] \otimes 1 & 0 \\[1ex] 0 & -[\Dirac_A, \pi_A(a)]
\otimes 1 \end{bmatrix},
\label{1com}
\end{eqnarray}
\begin{eqnarray}
[ \Dirac_I , \Pi_2 (e) \oplus \Pi_2(e) ]
=
\begin{bmatrix} 0  & [\Dirac_2 - i \Dirac_3, \Pi_2(e)]  \\ 
[\Dirac_2 + i \Dirac_3,\Pi_2(e)]  & 0
\end{bmatrix}
\label{2com} 
\end{eqnarray}
\begin{eqnarray}
[\Dirac_2 \pm i \Dirac_3, \Pi_2(e)]
=
\begin{bmatrix}
 \pm i [1 \otimes \Dirac,x] &   P[\Dirac^p, a] \otimes 1 - x(\Dirac^p \otimes 1)   \\ 
 [\Dirac^p,a] P \otimes 1  + (\Dirac^p \otimes1)x & -[ \Dirac^q,a] \otimes 1  
\end{bmatrix}. 
\label{3com}
\end{eqnarray}

\end{remark}

The spectral triple $(\mathcal{E}, \Pi, \Dirac)$ on $E$ in Theorem \ref{ConstExt} determines a seminorm $L= L_{\Dirac}$
on $C^1(E)$ given by $L(e) = \max\{\|[\Dirac_1, \Pi_1(e)]\|,\|[\Dirac_I, \Pi_2(e) \oplus \Pi_2(e) ]\|\}$. Our first objective is to show that $L$ is a nondegenerate Lipschitz seminorm:

\begin{proposition}\label{Lip-norm}
Let $e \in C^1(E)$, then  $L(e) = 0$ iff there exists $\lambda \in \C$ with $e= \lambda I$.
\end{proposition}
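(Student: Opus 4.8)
The plan is to read off the vanishing of the relevant matrix entries in the two commutators recorded in Remark \ref{commutator} when $L(e)=0$, and then to force $e$ to be scalar by combining nondegeneracy of the spectral triple on $A$ with the $P$-injectivity hypothesis. Throughout I write $e = x + PaP \otimes I_B$ for the unique decomposition with $x \in C^1(\mathcal{K}_B)$ and $a \in C^{1,P}(A)$. The easy implication is immediate: if $e = \lambda I_E = \lambda P \otimes I_B$, then $x = 0$ and $a = \lambda 1_A$, so $[\Dirac_A, \pi_A(a)] = 0$ and every entry of the matrices in (\ref{1com}) and (\ref{3com}) vanishes; hence $L(e)=0$.

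For the converse, suppose $L(e)=0$, so that both $[\Dirac_1,\Pi_1(e)]$ and $[\Dirac_I, \Pi_2(e)\oplus\Pi_2(e)]$ vanish. First I would exploit (\ref{1com}): since $[\Dirac_1,\Pi_1(e)]$ is the diagonal operator with entries $\pm [\Dirac_A,\pi_A(a)]\otimes 1$, its vanishing gives $[\Dirac_A,\pi_A(a)] = 0$. Because the triple $(\Alg, H_A, \Dirac_A)$ is, by the standing hypothesis of this section, a spectral metric space, its Lipschitz seminorm $L_{\Dirac_A}$ is nondegenerate; hence $a = \mu 1_A$ for some $\mu \in \C$, and in particular $[\Dirac_A^p, a] = [\Dirac_A^q, a] = 0$.

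It remains to show $x = 0$, and this is where $P$-injectivity enters decisively. Substituting $a = \mu 1_A$ into (\ref{3com}) and using that both the $+$ and $-$ versions must vanish, the off-diagonal entries collapse to the relations $x(\Dirac_A^p \otimes 1) = 0$ and $(\Dirac_A^p \otimes 1)x = 0$ on $\textup{dom}(\Dirac_A^p \otimes 1)$. Since $\Dirac_A^p|_{PH_A}$ is self-adjoint with trivial kernel by the assumption $\ker(\Dirac_A^p)\cap PH_A = \{0\}$, its range is dense in $PH_A$, so the range of $\Dirac_A^p \otimes 1$ is dense in $PH_A \otimes H_B$. The first relation then says that the bounded operator $x$ annihilates this dense range, whence $x = 0$. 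Therefore $e = PaP \otimes I_B = \mu P \otimes I_B = \mu I_E$, as required.

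I expect the only genuine subtlety to lie in the final step: one must check that $x(\Dirac_A^p \otimes 1)$ is truly an identity of operators on $\textup{dom}(\Dirac_A^p \otimes 1)$ and that the range of $\Dirac_A^p \otimes 1$ is dense, which is precisely where self-adjointness of $\Dirac_A^p|_{PH_A}$ and $P$-injectivity are used together. The nondegeneracy input for the quotient piece, by contrast, is a clean appeal to the metric-space hypothesis on $A$ via Proposition \ref{metric condition} and the preceding characterisation of nondegeneracy.
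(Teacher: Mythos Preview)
Your proof is correct and follows essentially the same route as the paper: both argue that vanishing of $[\Dirac_1,\Pi_1(e)]$ forces $a\in\C I_A$ by nondegeneracy of the triple on $A$, and then use the off-diagonal entries of (\ref{3com}) together with $P$-injectivity to kill $x$. The only cosmetic difference is that the paper uses the relation $(\Dirac_A^p\otimes 1)x=0$ and invokes triviality of $\ker(\Dirac_A^p|_{PH_A})$ directly, whereas you use the adjoint relation $x(\Dirac_A^p\otimes 1)=0$ and the equivalent density-of-range argument.
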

\begin{proof}
The proof consists in showing $C^1(E) \cap L^{-1}(\{0\}) = \C I_E$. To this end, let $e = x + PaP \otimes I_B$, where $x \in C^1(\mathcal{K}_B)$ and $a \in C^{1,P}(A)$. If $L(e) = 0$ then  $[\Dirac_1, \Pi_1(e)] = 0$ so that $[\Dirac_A, \pi_A(a)] = 0$ from eq. (\ref{1com}). Since $\Dirac_A$ implements a nondegenerate spectral triple on $A$, necessarily $a = \lambda I_A$ for some $\lambda \in \C$, so that we can write $e = x + \lambda I_E$.
Moreover, we have $[\Dirac_I, \Pi_2(x) \oplus \Pi_2(x)] = 0$, so by (\ref{2com}) and (\ref{3com})  this means that $(\Dirac_A^p \otimes 1)\pi(x)= (\Dirac_A^p \otimes 1)x = 0$. By $P$-injectivity, $x = 0$, completing the proof.
\end{proof}

\begin{lemma}\label{UE-Lip}
Let $\widetilde{\mathcal{U}}_E$ and $\mathcal{U}_{E,1}$ be the subsets of $E / \C I_E$ and $E$ respectively defined by
\begin{eqnarray*}
\widetilde{\mathcal{U}}_E &:=& \{\tilde{e} \in C^1(E) / \C I_E:\;L(\tilde{e}) \leq 1\},\\
\mathcal{U}_{E,1} &:=& \{e \in C^1(E): \|e\| \leq 1, \;L(e) \leq 1\}.
\end{eqnarray*}
Then 
$$
\widetilde{\mathcal{U}}_E \subseteq 7\mathcal{U}_X + \widetilde{\mathcal{U}}_Y:= \{ x+ PaP \otimes I/\C P \otimes I : \| [\Dirac_I, \Pi_2(x) \oplus \Pi_2(x)] \| \leq 7, \| [\Dirac_A , \pi_A(a) ] \| \leq 1 \}
$$ 
and 
$$\mathcal{U}_{E,1} \subseteq
7\mathcal{U}_{X} + \mathcal{U}_{Y,1}=  \{ x+ PaP \otimes \C I : \| [\Dirac_I, \Pi_2(x) \oplus \Pi_2(x) ] \| \leq 7, \| [\Dirac_A , \pi_A(a) ] \| \leq 1 \}.$$
\end{lemma}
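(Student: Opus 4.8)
The plan is to exploit the unique decomposition $C^1(E)/\C I_E = C^1(\mathcal{K}_B) + P(C^{1,P}(A)/\C I_A)P \otimes \C I_B$ (the Remark preceding this Lemma) to separate the ideal contribution of $x$ from the quotient contribution of $PaP \otimes I_B$, and then to bound the two pieces using the explicit commutator formulas (\ref{1com})--(\ref{3com}). First I would fix $\tilde{e} \in \widetilde{\mathcal{U}}_E$, write $e = x + PaP \otimes I_B$ with $x \in C^1(\mathcal{K}_B)$ and $a \in C^{1,P}(A)$, and record that $L(\tilde{e}) \leq 1$ forces both $\|[\Dirac_1, \Pi_1(e)]\| \leq 1$ and $\|[\Dirac_I, \Pi_2(e) \oplus \Pi_2(e)]\| \leq 1$. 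Equation (\ref{1com}) shows $\|[\Dirac_1, \Pi_1(e)]\| = \|[\Dirac_A, \pi_A(a)]\|$, so $\|[\Dirac_A, \pi_A(a)]\| \leq 1$ and hence $PaP \otimes I_B \in \widetilde{\mathcal{U}}_Y$; in the bounded case $e \in \mathcal{U}_{E,1}$ one also has $\|a\| = \|\sigma(e)\| \leq \|e\| \leq 1$, placing $PaP \otimes I_B$ in $\mathcal{U}_{Y,1}$. It then remains to prove $x \in 7\mathcal{U}_X$, i.e. $\|[\Dirac_I, \Pi_2(x) \oplus \Pi_2(x)]\| \leq 7$.

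The decisive idea is to isolate the pure-$x$ commutator entrywise rather than attempt to estimate the pure-quotient commutator. By (\ref{2com})--(\ref{3com}) the operator $[\Dirac_I, \Pi_2(x) \oplus \Pi_2(x)]$ is off-diagonal with blocks $[\Dirac_2 \pm i\Dirac_3, \Pi_2(x)]$ whose four entries (for both signs) are $\pm i[1 \otimes \Dirac_B, x]$, $-x(\Dirac_A^p \otimes 1)$, $(\Dirac_A^p \otimes 1)x$ and $0$. Comparing with the corresponding entries of $[\Dirac_2 \pm i\Dirac_3, \Pi_2(e)]$ in (\ref{3com}), which differ only by the $a$-contributions, I would read off that the $(1,1)$ entry is unchanged, giving $\|[1 \otimes \Dirac_B, x]\| \leq 1$, while $x(\Dirac_A^p \otimes 1) = P[\Dirac_A^p, a] \otimes 1 - (\text{the }(1,2)\text{ entry})$ and $(\Dirac_A^p \otimes 1)x = (\text{the }(2,1)\text{ entry}) - [\Dirac_A^p, a]P \otimes 1$. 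Since each entry of $[\Dirac_2 \pm i\Dirac_3, \Pi_2(e)]$ has norm at most $\|[\Dirac_I, \Pi_2(e) \oplus \Pi_2(e)]\| \leq 1$, both off-diagonal terms are bounded by $\|[\Dirac_A^p, a]\| + 1$.

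The remaining step is to control $\|[\Dirac_A^p, a]\|$ by quantities already bounded by $L$. For this I would use the identity (\ref{third}), $[\Dirac_A, a] = [\Dirac_A^p, a] + [\Dirac_A^q, a]$, so that $[\Dirac_A^p, a] = [\Dirac_A, a] - [\Dirac_A^q, a]$; the $(2,2)$ entry of $[\Dirac_2 \pm i\Dirac_3, \Pi_2(e)]$ equals $-[\Dirac_A^q, a] \otimes 1$, whence $\|[\Dirac_A^q, a]\| \leq 1$, and together with $\|[\Dirac_A, a]\| \leq 1$ this gives $\|[\Dirac_A^p, a]\| \leq 2$. Therefore $\|x(\Dirac_A^p \otimes 1)\|, \|(\Dirac_A^p \otimes 1)x\| \leq 3$. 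Estimating the norm of each block $[\Dirac_2 \pm i\Dirac_3, \Pi_2(x)]$ by the sum of its four entry norms yields $\|[\Dirac_2 \pm i\Dirac_3, \Pi_2(x)]\| \leq 1 + 3 + 3 + 0 = 7$, hence $\|[\Dirac_I, \Pi_2(x) \oplus \Pi_2(x)]\| \leq 7$ and $x \in 7\mathcal{U}_X$. Combined with the first paragraph this establishes both inclusions.

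I expect the main obstacle to be resisting the naive route of bounding the pure-quotient commutator $M = [\Dirac_I, \Pi_2(PaP \otimes I_B) \oplus \Pi_2(PaP \otimes I_B)]$ directly in terms of $\|[\Dirac_A, a]\|$: this is genuinely impossible, since one can arrange $a$ whose off-diagonal part relative to $P$ almost intertwines $\Dirac_A^p$ and $\Dirac_A^q$, making $\|M\|$ large while $[\Dirac_A, a] = 0$, with only the stronger $P$-regularity norm controlling $M$. The argument must therefore never form $M$, and instead extract the $x$-part entry by entry, the only admissible correction being the controlled difference $[\Dirac_A^p, a] = [\Dirac_A, a] - [\Dirac_A^q, a]$. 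The constant $7$ comes from the crude triangle inequality over the four matrix entries and is not optimal.
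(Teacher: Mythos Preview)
Your proof is correct and follows essentially the same route as the paper's own argument: read off the five entrywise inequalities from (\ref{1com})--(\ref{3com}), use $[\Dirac_A,a]=[\Dirac_A^p,a]+[\Dirac_A^q,a]$ together with $\|[\Dirac_A^q,a]\|\le 1$ to obtain $\|[\Dirac_A^p,a]\|\le 2$, deduce $\|(\Dirac_A^p\otimes 1)x\|,\|x(\Dirac_A^p\otimes 1)\|\le 3$, and sum the block entries to get~$7$. Your extra remarks (the explicit $\|a\|=\|\sigma(e)\|\le\|e\|$ for the second inclusion and the comment on why one cannot bound the pure-quotient commutator directly) only add detail beyond what the paper writes.
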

\begin{proof} To show the first inclusion, 
let $e \in C^1(E)$ be such that $L(e) \leq 1$ and write $e = x + PaP \otimes I_B$ for unique $x \in \mathcal{K}_B$ and $a \in C^{1,P}(A)$. We need to show that $\| [ \Dirac_A , \pi_A(a)]\| \leq 1$ and $\| [\Dirac_I , \Pi_2(x) \oplus \Pi_2(x) ]\| \leq 7$. 

Now $L(e) \leq 1$ is equivalent to $\|[\Dirac_1, \Pi_1(e)]\| \leq 1$ and $\|[\Dirac_I, \Pi_2(e) \oplus \Pi_2(e)]\| \leq 1$. This means that the norms of all entries in the matrix expressions in Rem.\ref{commutator} are bounded by 1 from which we obtain the following inequalities: 
\begin{enumerate}
\item
$\|[\Dirac_A, \pi_A(a)]\|= \|[\Dirac_A , a] \| \leq 1 $,
\item
$\| [ 1 \otimes \Dirac_B , x ]\| \leq 1$, 
\item
$\| P [ \Dirac_A^p , a] \otimes 1 + x (D_A^p \otimes 1) \| \leq 1  $,
\item
$\| [D_A^p ,a] P \otimes 1 + (D^p \otimes 1)x \| \leq 1$,
\item
$\| [D_A^q , a ] \| \leq 1$.
\end{enumerate}
Since $[ \Dirac_A , a ]= [ \Dirac_A^p , a]+[ \Dirac_A^q , a]$, we must have $\|[\Dirac_A^p,
a]\| \leq 2$, using (i) and (v).
Then (iii) and (iv) imply 
\begin{eqnarray}
\| (\Dirac^p \otimes 1)x \|, \|x(\Dirac^p \otimes 1)\| \leq 3. 
\label{xdp}
\end{eqnarray}
Now (ii) and (\ref{xdp}) imply
\begin{eqnarray*}
\|[\Dirac_2 \pm i \Dirac_3, \Pi_2(x)]\|
&=&
\left\|
\begin{bmatrix}
 \pm i [1 \otimes \Dirac,x] &   - x(\Dirac^p \otimes 1)   \\ 
  (\Dirac^p \otimes1)x & 0  
\end{bmatrix} 
\right\| \\
&\leq &
\| [1 \otimes \Dirac_B , x] \| + \| x (\Dirac^p \otimes 1 ) \| + \| (\Dirac^p \otimes 1) x \|  \\
&\leq &
7
\end{eqnarray*}
This shows that $x \in 7 \: \mathcal{U}_X$ and the result follows. 
The second inclusion can be shown in a similar way.
\end{proof}

The next Lemma is immediate from our definitions.

\begin{lemma}\label{isomet}
Let $\sigma: E \to A$ be the quotient map with induced map $\tilde{\sigma} : E / \C I \to A / \C I$. Then the maps
\begin{eqnarray*}
\sigma|_{\mathcal{U}_{Y,1}}: (\mathcal{U}_{Y,1}, \|\cdot\|_E) \rightarrow (\mathcal{U}_{A,1}, \|\cdot\|_A), \;\;\;\;\tilde{\sigma}|_{\widetilde{\mathcal{U}}_Y}:
(\widetilde{\mathcal{U}}_Y, \|\cdot\|_{E / \C I_E}) \rightarrow (\widetilde{\mathcal{U}}_A, \|\cdot\|_{A / \C I_A})
\end{eqnarray*}
are isometric bijections. Therefore, since $(C^{1,P}(A), H_A , \Dirac_A)$ satisfies Rieffel's metric condition, $\widetilde{\mathcal{U}}_Y \subseteq E / \C I_E$ is bounded and $\mathcal{U}_{Y,1} \subseteq E$ is totally bounded.
\end{lemma}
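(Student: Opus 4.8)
The plan is to realise both maps as the single parametrisation $a \mapsto P\pi_A(a)P \otimes I_B$ read backwards by $\sigma$, and then to transport Rieffel's metric conditions for $A$ across the resulting identifications. First I would record bijectivity: by the (algebraically direct) decomposition $E = \mathcal{K}(PH_A)\otimes B \,\oplus\, PAP \otimes \C I_B$ from Def.\ref{Toeplitz type}, every element of $\mathcal{U}_{Y,1}$ is uniquely of the form $PaP \otimes I_B$, and $\sigma(PaP \otimes I_B)=a$ recovers $a$. Since the inequalities defining $\mathcal{U}_{Y,1}$, namely $\|a\|\le 1$ and $\|[\Dirac_A,\pi_A(a)]\|\le 1$, are precisely those cutting out $\mathcal{U}_{A,1}$, the assignment $PaP \otimes I_B \mapsto a$ is a bijection $\mathcal{U}_{Y,1}\to\mathcal{U}_{A,1}$; the same parametrisation, now modulo scalars, gives a bijection $\widetilde{\mathcal{U}}_Y\to\widetilde{\mathcal{U}}_A$.

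The heart of the matter is the isometry, which reduces to the single identity $\|PaP \otimes I_B\|_E = \|a\|_A$. I would obtain this from the chain
\begin{eqnarray*}
\|a\|_A = \|\sigma(PaP \otimes I_B)\| \le \|PaP \otimes I_B\|_E = \|P\pi_A(a)P\| \le \|\pi_A(a)\| = \|a\|_A,
\end{eqnarray*}
where the first inequality uses that the quotient $^*$-homomorphism $\sigma$ is contractive and the second that $\|P\|\le 1$; equality throughout forces $\|PaP \otimes I_B\|_E=\|a\|_A$, settling the isometry of $\sigma|_{\mathcal{U}_{Y,1}}$. For the induced map $\tilde\sigma$ I would invoke $I_E = P\otimes I_B$ and compute the quotient norm directly,
\begin{eqnarray*}
\|P\tilde a P \otimes I_B\|_{E/\C I_E} = \inf_{\lambda\in\C}\|P\pi_A(a-\lambda I_A)P \otimes I_B\| = \inf_{\lambda\in\C}\|a-\lambda I_A\|_A = \|\tilde a\|_{A/\C I_A},
\end{eqnarray*}
the middle equality being the isometry just established applied to $a-\lambda I_A$.

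Finally I would transfer the two metric conditions. Since $(C^{1,P}(A),H_A,\Dirac_A)$ satisfies Rieffel's metric condition, the set $\widetilde{\mathcal{U}}_A$ is norm-bounded in $A/\C I_A$ and $\mathcal{U}_{A,1}$ is totally bounded in $A$ (Prop.\ref{metric condition}, conditions 2.(a) via part (1), and 2.(b)). Because boundedness and total boundedness are purely metric notions preserved under isometric bijections, the isometries above immediately yield that $\widetilde{\mathcal{U}}_Y\subseteq E/\C I_E$ is bounded and $\mathcal{U}_{Y,1}\subseteq E$ is totally bounded. I do not expect a genuine obstacle: the only non-formal point is the identity $\|P\pi_A(a)P\|=\|a\|_A$, and this is forced by the extension structure through the contractivity of $\sigma$; everything else is bookkeeping with the definitions, which is why the lemma is essentially immediate.
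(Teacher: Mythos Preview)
Your argument is correct and is exactly the kind of verification the paper has in mind: the authors give no proof at all, stating only that the lemma ``is immediate from our definitions.'' The sandwich inequality $\|a\| = \|\sigma(PaP\otimes I_B)\| \le \|PaP\otimes I_B\|_E \le \|\pi_A(a)\| = \|a\|$ is the natural way to see the isometry, and the transfer of Rieffel's conditions is then formal, just as you say.
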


The uniform norm estimates in the next result is of key importance to establish Rieffel's metric condition. It uses a norm estimate from the proof of Lemma \ref{UE-Lip}.

\begin{lemma}\label{approx}
Let $Y := ({\Dirac_A^p|_{P H_A}})^{-1} \in \mathcal{K}(PH_A)$, let
$\{P_k\}_{k \in \N}$ be the spectral projections of $Y$ and write $Q_n := \sum_{k =
1}^n P_k$. Then for each $\epsilon > 0$ there exists an $N \in \N$ such that, for each $ x \in \mathcal{U}_X$ and for all $n \geq N$,
\begin{eqnarray*}
\|x - (Q_n \otimes 1) x (Q_n \otimes 1) \| \leq \epsilon.
\end{eqnarray*}
Moreover, for each $x \in \mathcal{U}_X$ and for each $n \in \N$, $\|x_n\|\leq 3 \|Y\|$, where $x_n := (Q_n \otimes 1) x (Q_n \otimes 1)$.
\end{lemma}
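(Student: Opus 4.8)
The plan is to turn $P$-injectivity into a genuine factorisation device: since $\Dirac_A^p|_{PH_A}$ is self-adjoint with bounded inverse $Y\in\mathcal{K}(PH_A)$, and since for $x\in\mathcal{U}_X$ the off-diagonal blocks of $[\Dirac_I,\Pi_2(x)\oplus\Pi_2(x)]$ computed with $a=0$ in Rem.\ref{commutator} (using $\pi_\sigma(x)=0$) are exactly $-x(\Dirac_A^p\otimes 1)$ and $(\Dirac_A^p\otimes 1)x$, the bound $\|[\Dirac_I,\Pi_2(x)\oplus\Pi_2(x)]\|\le 1$ yields the uniform estimates $\|(\Dirac_A^p\otimes 1)x\|\le 1$ and $\|x(\Dirac_A^p\otimes 1)\|\le 1$. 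The first thing I would record are the two factorisation identities $x=(Y\otimes 1)\,\textup{cl}((\Dirac_A^p\otimes 1)x)$ and $x=\textup{cl}(x(\Dirac_A^p\otimes 1))\,(Y\otimes 1)$. These hold because $x=(P\otimes 1)x(P\otimes 1)$ (as $x\in\mathcal{K}(PH_A)\otimes B$) together with $Y\Dirac_A^p=\Dirac_A^p Y=P$ on $PH_A$.

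For the uniform approximation I would set $R_n:=P-Q_n$, write $P=Q_n+R_n$, and expand the difference $(P\otimes 1)x(P\otimes 1)-(Q_n\otimes 1)x(Q_n\otimes 1)$; the three cross terms regroup as $x-x_n=(R_n\otimes 1)\,x+(Q_n\otimes 1)\,x\,(R_n\otimes 1)$. Inserting the factorisations gives $(R_n\otimes 1)x=(R_nY\otimes 1)\,\textup{cl}((\Dirac_A^p\otimes 1)x)$ and $x(R_n\otimes 1)=\textup{cl}(x(\Dirac_A^p\otimes 1))\,(YR_n\otimes 1)$, so that $\|x-x_n\|\le 2\|YR_n\|$ for \emph{every} $x\in\mathcal{U}_X$, the uniformity coming precisely from the fact that the $\Dirac_A^p$-factors contribute at most $1$. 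Since the $R_n$ are the tail spectral projections of the compact self-adjoint operator $Y$, we have $\|YR_n\|\to 0$; moreover $R_n\le R_N$ for $n\ge N$ forces $\|YR_n\|\le\|YR_N\|$, so choosing $N$ with $2\|YR_N\|\le\epsilon$ settles the first claim.

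The uniform norm bound then falls out of the same factorisation, since $\|x_n\|\le\|x\|=\|(Y\otimes 1)\,\textup{cl}((\Dirac_A^p\otimes 1)x)\|\le\|Y\|\,\|(\Dirac_A^p\otimes 1)x\|$; the estimate \eqref{xdp} from the proof of Lemma \ref{UE-Lip} bounds the last factor by $3$ (indeed by $1$ on $\mathcal{U}_X$ directly), giving $\|x_n\|\le 3\|Y\|$.

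I expect the only genuine obstacle to be the bookkeeping with the unbounded $\Dirac_A^p$ in the two factorisation identities: one must verify them first on the dense subspace $\textup{dom}(\Dirac_A)\odot\textup{dom}(\Dirac_B)$, where each factor is literally defined and where $(Y\Dirac_A^p\otimes 1)x\xi=(P\otimes 1)x\xi=x\xi$, and only then pass to all of $H_A\otimes H_B$ by density, using that $(\Dirac_A^p\otimes 1)x$ and $x(\Dirac_A^p\otimes 1)$ are already bounded and $Y$ is bounded. Once these identities are secured, the remainder is routine norm bookkeeping driven by the compactness of $Y$ and the uniform commutator estimates.
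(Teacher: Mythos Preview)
Your proof is correct and follows essentially the same strategy as the paper: factor $x$ through $Y\otimes 1$ via the identity $Y\Dirac_A^p=P$ on $PH_A$, then use the compactness of $Y$ (phrased as $\|Y-YQ_n\|\to 0$ in the paper, $\|YR_n\|\to 0$ in your version) together with the uniform bound on $\|x(\Dirac_A^p\otimes 1)\|$ and $\|(\Dirac_A^p\otimes 1)x\|$ to obtain the uniform approximation and the norm bound on $x_n$. Your observation that for $x\in\mathcal{U}_X$ the direct commutator computation with $a=0$ gives the sharper bound $1$ (rather than the $3$ imported from \eqref{xdp}), and your explicit care with closures and the dense core $\textup{dom}(\Dirac_A)\odot\textup{dom}(\Dirac_B)$, are minor refinements of the paper's more informal argument.
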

\begin{proof} Since $Y$ is a compact operator, it quickly follows that for each
$\epsilon > 0$ there exists an $N \in \N$ such that $\|Y - YQ_n\| \leq \frac{\epsilon}{6}$ and $\|Y - Q_n Y\| \leq
\frac{\epsilon}{6}$ whenever $n \geq N$. For $x \in \mathcal{U}_X$, using $P \Dirac_A Y = P$, we obtain
\begin{eqnarray*}
\|(Q_n \otimes 1) x (Q_n \otimes 1)\| 
&\leq& 
\|(Q_n \otimes 1) x (P \Dirac_A Y \otimes 1 ) (Q_n \otimes 1)\| \\
&\leq& 
\|(Q_n \otimes 1) x (P \Dirac_A \otimes 1 )( Y \otimes 1) (Q_n \otimes 1)\| \\
&\leq& 
\|Q_n \| \|  x (P \Dirac_A \otimes 1 ) \| \| Y \| \|Q_n \| \\
& = &
\|x(P \Dirac_A \otimes 1)\| \|Y\| \leq 3\|Y\|,
\end{eqnarray*}
where the last inequality follows from (\ref{xdp}) in the proof of Lemma \ref{UE-Lip}.
This proves the second statement. To prove the first statement note that for all $x \in \mathcal{U}_X$ and $ n \geq N$ 
\begin{eqnarray*}
\|x - x (Q_n \otimes 1)\| & \leq & \|x (P \Dirac_A \otimes 1)( Y \otimes 1) - x (P \Dirac_A \otimes 1 ) (Y Q_n \otimes 1)\| \\
 & \leq & \|x(P \Dirac_A \otimes 1)\| \|Y \otimes 1 - Y Q_n \otimes 1 \| \\
 & = & \|x(P \Dirac_A \otimes 1)\| \|Y - Y Q_n \| \\
 & \leq & \frac{\epsilon}{2},
\end{eqnarray*}
and similarly $\|x - (Q_n \otimes 1) x\| \leq \frac{\epsilon}{2}$, so that
\begin{eqnarray*}
\|x - (Q_n \otimes 1) x (Q_n \otimes 1) x\| 
&\leq& 
\|x - x (Q_n \otimes 1)\| + \|x (Q_n \otimes 1) - (Q_n \otimes 1) x (Q_n \otimes 1)\| \\
&\leq & \epsilon.
\end{eqnarray*}

\end{proof}

We can now prove our second main result.

\begin{theorem}\label{ConstExt2}
Let $A$ and $B$ be unital C$^*$-algebras and suppose $E$ arises as the short exact sequence
(\ref{extension}). Suppose further that there exists spectral triples $(\Alg,H_A,\Dirac_A)$ on $A$ and
$(\Balg,H_B,\Dirac_B)$ on $B$, represented via $\pi_A$ and $\pi_B$ respectively, and an orthogonal projection $P \in B(H_A)$ such that 
$(\Alg, H_A, \Dirac_A, P)$ is of Toeplitz type and $P$-injective. If the spectral triples $(\Alg,H_A,\Dirac_A)$ and
$(\Balg,H_B,\Dirac_B)$ satisfy Rieffel's metric condition then so does the spectral triple $(\mathcal{E}, H, \Dirac)$ so that $(E, L_{\Dirac})$ is a spectral metric space.
\end{theorem}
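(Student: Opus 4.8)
The plan is to verify the two parts of Rieffel's metric condition (Prop.\ref{metric condition}) for the Lipschitz seminorm $L = L_{\Dirac}$ on $C^1(E)$, having already established its nondegeneracy in Prop.\ref{Lip-norm}. Concretely, I must show that $\widetilde{\mathcal{U}}_E$ is norm-bounded in $E/\C I_E$ (which by Prop.\ref{metric condition}.(1) is equivalent to the finite-diameter condition) and that $\mathcal{U}_{E,1}$ is totally bounded in $E$. Both will be deduced from the containments of Lemma \ref{UE-Lip}, namely $\widetilde{\mathcal{U}}_E \subseteq 7\mathcal{U}_X + \widetilde{\mathcal{U}}_Y$ and $\mathcal{U}_{E,1} \subseteq 7\mathcal{U}_X + \mathcal{U}_{Y,1}$, once the relevant properties of the summands are in hand. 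Lemma \ref{isomet} already supplies that $\widetilde{\mathcal{U}}_Y$ is bounded and $\mathcal{U}_{Y,1}$ is totally bounded (using the metric condition for $(C^{1,P}(A),H_A,\Dirac_A)$), and since boundedness and total boundedness are stable under the Minkowski sum, the entire burden reduces to controlling the single set $\mathcal{U}_X$.

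For the finite-diameter part, boundedness of $\mathcal{U}_X$ follows at once from Lemma \ref{approx}: taking $\epsilon = 1$ and the corresponding $N$, every $x \in \mathcal{U}_X$ satisfies $\|x\| \leq \|x - x_N\| + \|x_N\| \leq 1 + 3\|Y\|$, where $x_N = (Q_N \otimes 1)x(Q_N \otimes 1)$. Passage to the quotient only decreases norms, so $7\mathcal{U}_X + \widetilde{\mathcal{U}}_Y$, and hence $\widetilde{\mathcal{U}}_E$, is bounded in $E/\C I_E$.

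The substantial step is total boundedness of $\mathcal{U}_X$, and this is precisely where $P$-injectivity does its work. Fix $\epsilon > 0$ and use Lemma \ref{approx} to choose $N$ with $\|x - x_N\| \leq \epsilon$ uniformly over $x \in \mathcal{U}_X$; it then suffices to show the compressed family $\{x_N : x \in \mathcal{U}_X\}$ is totally bounded, since a set uniformly $\epsilon$-approximable by a totally bounded set is itself totally bounded. Now $P$-injectivity guarantees that $Y = (\Dirac_A^p|_{PH_A})^{-1}$ is compact, so its spectral projections $P_k$ are finite rank and $Q_N = \sum_{k=1}^N P_k$ has finite rank $r_N$; consequently $x_N$ lies in the finite matrix algebra $Q_N \mathcal{K}(PH_A) Q_N \otimes B \cong M_{r_N}(B)$. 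Writing $x_N$ in matrix-unit coordinates for the eigenbasis of $Y$, each entry $b_{ij} \in B$ is a corner compression of $x_N$, so $\|b_{ij}\| \leq \|x_N\| \leq 3\|Y\|$; and since $Q_N \otimes 1$ commutes with $1 \otimes \Dirac_B$ we have $[1 \otimes \Dirac_B, x_N] = (Q_N \otimes 1)[1 \otimes \Dirac_B, x](Q_N \otimes 1)$, whence $\|[\Dirac_B, b_{ij}]\| \leq \|[1 \otimes \Dirac_B, x_N]\| \leq 1$. Thus each entry ranges over $\{b \in C^1(B) : \|b\| \leq 3\|Y\|, \|[\Dirac_B, b]\| \leq 1\}$, which is contained in a scalar multiple of $\mathcal{U}_{B,1}$ and hence totally bounded by the metric condition for $B$. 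As there are only finitely many entries, $\{x_N\}$ is the image of a subset of a finite product of totally bounded sets under the bounded linear coordinate map, and is therefore totally bounded.

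With $\mathcal{U}_X$ totally bounded and $\mathcal{U}_{Y,1}$ totally bounded (Lemma \ref{isomet}), the Minkowski sum $7\mathcal{U}_X + \mathcal{U}_{Y,1}$ is totally bounded, and so is its subset $\mathcal{U}_{E,1}$; combined with the finite-diameter conclusion and the nondegeneracy from Prop.\ref{Lip-norm}, both of Rieffel's metric conditions hold, so $(E, L_{\Dirac})$ is a spectral metric space. I expect the main obstacle to be exactly the total boundedness of $\mathcal{U}_X$: the ideal $\mathcal{K}_B$ is non-unital and infinite-dimensional, so compactness cannot be invoked directly, and the essential device is to trade the infinite ``$A$-direction'' for a finite corner via the compact inverse $Y$ (available only under $P$-injectivity) before feeding the finitely many $B$-valued entries into the metric condition for $B$.
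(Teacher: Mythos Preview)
Your proof is correct and follows essentially the same route as the paper's own argument: reduce via Lemmas \ref{UE-Lip} and \ref{isomet} to showing $\mathcal{U}_X$ is totally bounded, invoke Lemma \ref{approx} to pass uniformly to the finite-rank corner $(Q_N\otimes 1)\mathcal{U}_X(Q_N\otimes 1)\subseteq M_{r_N}(B)$, and then feed the individual matrix entries into Rieffel's condition for $B$. The only cosmetic additions are your explicit verification of boundedness of $\mathcal{U}_X$ (which in the paper is subsumed by total boundedness) and your slightly more detailed justification of $\|[\Dirac_B,b_{ij}]\|\leq 1$ via commutation of $Q_N\otimes 1$ with $1\otimes\Dirac_B$.
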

\begin{proof}
According to Rieffel's criteria (Prop.\ref{metric condition}) we need to show that $\widetilde{\mathcal{U}}_E$ is bounded and $\mathcal{U}_{E,1}$ is totally bounded.
By Lemma \ref{isomet} we know that $\widetilde{\mathcal{U}}_Y$ and $ \mathcal{U}_{Y,1}$ are bounded, respectively totally bounded. By Lemma \ref{UE-Lip} we know that $\widetilde{\mathcal{U}}_E \subseteq 7\mathcal{U}_X + \widetilde{\mathcal{U}}_Y$ and $\mathcal{U}_{E,1} \subseteq 7\mathcal{U}_{X} + \mathcal{U}_{Y,1}$. So we have only to show that the set $\mathcal{U}_{X} \subseteq \mathcal{K}_B$ is  totally norm bounded. Using Lemma \ref{approx}, it will suffice for us to show that the sets 
$$
(Q_n \otimes 1) \mathcal{U}_X (Q_n \otimes 1)
$$ 
are totally bounded for each $n \in \N$. Since we may regard $(Q_n \otimes 1) \mathcal{U}_X (Q_n \otimes 1)$ as a subset of $M_{m_n}(B)$, where $m_n = \textup{dim}(Q_n)$, any given element in this set can be expressed in the form
\begin{eqnarray*}
x_n = \sum_{i,j = 1}^{m_n} \pi_B(b_{i,j}) \otimes (| e_j \rangle \langle e_i|) ,
\end{eqnarray*}
where $b_{i,j} \in B$ and $\{e_i\}_{i = 1}^{m_n}$ is an orthonormal basis for the finite dimensional
Hilbert space $Q_n H_A$. We shall denote the corresponding projections in $B(Q_n H_A \otimes H_B)$ by $\{p_i\}_{i =
1}^{m_n}$. Since these commute with
$1 \otimes \Dirac_B$, we have that for $x \in \mathcal{U}_X$ and $n \in \N$,
\begin{eqnarray*}
\|\pi_B(b_{i,j})\| = \|p_j x_n p_i\| \leq \|x_n\| \leq 3 \|Y\|,
\end{eqnarray*}
\begin{eqnarray*}
\|[\Dirac_B, \pi_B(b_{i,j})]\| = \|[1 \otimes \Dirac_B, p_j x_n p_i]\| = \|p_j [1 \otimes \Dirac_B, x] p_i\| \leq
1,
\end{eqnarray*}
since  $\|[1 \otimes \Dirac_B, x] \| \leq 1$ from $\| \Dirac_I , \Pi_2(x) \oplus\Pi_2(x)]\| \leq 1$.  These estimates tell us that the sets $Q_n \mathcal{U}_X Q_n$ are contained in the sets
\begin{eqnarray*}
S_n &:=& \Big\{ \sum_{i,j = 1}^{m_n}
\pi_B(b_{i,j}) \otimes (| e_j \rangle \langle e_i|): \;\;\; b_{i,j} \in \mathcal{B}, \;\|b_{i,j}\| \leq 3\|Y\|, \;\|[\Dirac_B,
\pi_B(b_{i,j})]\| \leq 1 \Big\} \\
& \subs & \Big\{ \sum_{i,j = 1}^{m_n}
\pi_B(b_{i,j}) \otimes (| e_j \rangle \langle e_i|): \;\;\; b_{i,j} \in 3\|Y\| \mathcal{U}_{B,1} \Big\}.
\end{eqnarray*}
Now we recall our assumption that the spectral triple on $B$ satisfies Rieffel's metric condition, so
that $\mathcal{U}_{B,1}$ is totally bounded and consequently the sets $S_n$ are totally bounded as well. This
concludes the proof of the Theorem.
\end{proof}

\section{Examples}\label{eg}

\subsection{Split extensions.} Recall that an extension (\ref{extension}) is \textit{split} when it is semisplit and the
splitting map $s: A \to \mathcal{L_B}$ can be chosen to be a $*$-homomorphism (rather than merely a completely positive map). If such an extension admits a Toeplitz representation, as in Definition \ref{Toeplitz type}, then $P$ is the identity in $B(\Hil_A)$, and we can restrict our attention to representations of this type. This significantly reduces the technicalities associated with the construction of spectral triples on such extensions. Our construction in Theorem \ref{ConstExt} reads in this case as follows:

\begin{proposition}
Let $A$ and $B$ be unital C$^*$-algebras, endowed with spectral triples $(\Alg,H_A,\Dirac_A)$ and
$(\Balg,H_B,\Dirac_B)$ respectively. Let $E \cong \mathcal{K}(H_A) \otimes B + A \otimes I_B$ be a unital split extension of $A$ by the stabilisation of $B$. Then $(\mathcal{E}, H, \Dirac)$, represented via $\Pi$, defines a spectral triple on $E$. Here,
\begin{eqnarray*}
\Pi = \pi_{\sigma} \oplus \pi_{\sigma} \oplus \pi \oplus \pi_{\sigma}  \oplus \pi \oplus \pi_{\sigma}, \;\;H = H_A \otimes
H_B \otimes \C^6,
\end{eqnarray*}
\begin{eqnarray*}
\Dirac = \begin{bmatrix} \Dirac_A \otimes 1 & 1 \otimes \Dirac_B & 0 & 0  &0 &0 \\[1ex] 
1 \otimes \Dirac_B & -\Dirac_A \otimes 1 & 0 & 0 &0&0 \\[1ex] 
0 & 0 & 0 & 0 & -i \otimes \Dirac_B & \Dirac_A \otimes 1  \\[1ex] 
0 & 0 & 0 & 0 & \Dirac_A \otimes 1 & -i \otimes \Dirac_B \\[1ex] 
0 & 0 & i \otimes \Dirac_B & \Dirac_A \otimes 1  & 0 & 0 \\[1ex] 
0 & 0 &  \Dirac_A \otimes 1 & i \otimes \Dirac_B & 0 & 0 \\[1ex] 
\end{bmatrix}.
\end{eqnarray*}
If $\Dirac_A$ is invertible and the spectral triples $(\Alg,\Hil_A,\Dirac_A)$ and $(\Balg,\Hil_B,\Dirac_B)$
satisfy Rieffel's metric condition, so does the spectral triple on $E$.
\end{proposition}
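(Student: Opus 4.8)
The plan is to recognise this Proposition as the specialisation of Theorems \ref{ConstExt} and \ref{ConstExt2} to the degenerate case $P = I_{H_A}$, which, as noted above, is forced by the splitting hypothesis. First I would check that the quadruple $(\Alg, H_A, \Dirac_A, I)$ is automatically of Toeplitz type in the sense of Definition \ref{Toeplitz type}: with $P = I$ the projection commutes with $\Dirac_A$ trivially, and the smoothness requirement $[P, \pi_A(a)] \in \mathcal{C}(H_A)$ reduces to $0 \in \mathcal{C}(H_A)$, which holds vacuously. Hence no hypotheses beyond the two given spectral triples are needed in order to invoke Theorem \ref{ConstExt}.

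Next I would substitute $P = I$, so that $Q = 1 - P = 0$, into the defining formulas (\ref{Dirac1})--(\ref{DiracI}). This yields $\Dirac_A^p = \Dirac_A$ and $\Dirac_A^q = 0$, whence $\Dirac_2 = \begin{bmatrix} 0 & \Dirac_A \\ \Dirac_A & 0 \end{bmatrix} \otimes I$ while $\Dirac_1$ and $\Dirac_3$ are unchanged. Forming $\Dirac = \Dirac_1 \oplus \Dirac_I$ with $\Dirac_I$ built from these simplified blocks reproduces exactly the $6 \times 6$ block matrix displayed in the statement, and correspondingly $\Pi = \Pi_1 \oplus \Pi_2 \oplus \Pi_2$ unfolds to $\pi_{\sigma} \oplus \pi_{\sigma} \oplus \pi \oplus \pi_{\sigma} \oplus \pi \oplus \pi_{\sigma}$ on $H_A \otimes H_B \otimes \C^6$. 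The first assertion then follows at once from Theorem \ref{ConstExt}.

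For the metric assertion, the key observation is that when $P = I$ the $P$-injectivity condition $\ker(\Dirac_A^p) \cap P H_A = \{0\}$ collapses to $\ker(\Dirac_A) = \{0\}$. Since $\Dirac_A$ already has compact resolvent, this is equivalent to $\Dirac_A$ being invertible with compact inverse, which is precisely the stated hypothesis. Thus ``$\Dirac_A$ invertible'' is $P$-injectivity in disguise, and Theorem \ref{ConstExt2} applies verbatim to conclude that $(E, L_{\Dirac})$ is a spectral metric space.

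The only step demanding any care, and hence the main obstacle, is verifying that the displayed Dirac operator genuinely coincides with $\Dirac_1 \oplus \Dirac_I$ after the substitution --- in particular that the off-diagonal blocks $\Dirac_2 \mp i \Dirac_3$ occupy the claimed positions in rows and columns $3$--$6$. This is purely a matter of tracking the $2 \times 2$ and $4 \times 4$ block bookkeeping of the formulas in Section \ref{spectriple on extension} rather than any genuine analytic difficulty, and it is dispatched by direct comparison.
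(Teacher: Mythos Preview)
Your proposal is correct and matches the paper's approach exactly: the paper does not supply a separate proof for this Proposition but introduces it with the sentence ``Our construction in Theorem \ref{ConstExt} reads in this case as follows,'' i.e.\ it is simply the specialisation $P = I_{H_A}$ of Theorems \ref{ConstExt} and \ref{ConstExt2}. Your verification that Toeplitz type is automatic, that the $6\times 6$ matrix arises from $\Dirac_1 \oplus \Dirac_I$ under $\Dirac_A^p = \Dirac_A$, $\Dirac_A^q = 0$, and that $P$-injectivity becomes invertibility of $\Dirac_A$, is precisely the intended reading.
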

In this case of split extensions other constructions are possible. For instance we can use the following representation and Dirac operator
\begin{eqnarray*}
\Pi = \pi_{\sigma} \oplus \pi_{\sigma} \oplus \pi \oplus \pi_{\sigma}, \;\;H = H_A \otimes
H_B \otimes \C^4,
\end{eqnarray*}
\begin{eqnarray*}
\Dirac = \begin{bmatrix} \Dirac_A \otimes 1 & 1
\otimes \Dirac_B & 0 & 0 \\[1ex] 1 \otimes \Dirac_B & -\Dirac_A \otimes 1 & 0 & 0 \\[1ex] 0 & 0 & 1 \otimes \Dirac_B & \Dirac_A \otimes 1  \\[1ex]  0 & 0 & \Dirac_A \otimes 1 & - 1 \otimes \Dirac_B \end{bmatrix},
\end{eqnarray*}
which seems more natural from the point of view of  K-homology.

\subsection{Extensions by compacts.} An extension by compacts is a short exact sequence of the form,
\begin{eqnarray}
\label{ExtCompact} \xymatrix{ 0 \ar[r] & \mathcal{K} \ar[r]^{\iota} & E \ar[r]^{\sigma} & A \ar[r] & 0}.
\end{eqnarray}
which we have mentioned before. From our point of view, these extensions correspond to the instance $B = \C$, the continuous functions on a single point. The canonical spectral triple on this space is the 'one-point' triple $(\C,\C,0)$. A second re-statement of Theorem \ref{ConstExt} is as follows:
\begin{proposition}
Let $A$ be a unital C$^*$-algebra, endowed with a spectral triple $(\Alg,H_A,\Dirac_A)$. Let $E \cong PAP + \mathcal{K}(PH_A)$ be a unital extension of $A$ by compact operators such that $[P,a]$ is a compact operator for each $a \in A$, $PAP \cap \mathcal{K}(P H_A) = \{0\}$ and the quadruple $(\Alg,H_A,\Dirac_A, P)$ is of Toeplitz type. Then $(\mathcal{E}, H, \Dirac)$, represented via $\Pi$, defines a spectral triple on $E$. Here,
\begin{eqnarray*}
\Pi = \pi_{\sigma} \oplus \pi_{\sigma} \oplus \pi \oplus \pi_{\sigma}  \oplus \pi \oplus \pi_{\sigma}, \;\;H = H_A  \otimes \C^6,
\end{eqnarray*}
\begin{eqnarray*}
\Dirac = \begin{bmatrix} \Dirac_A  & 0 & 0 & 0  & 0 &0 \\[1ex] 
0 & -\Dirac_A  & 0 & 0 &0&0 \\[1ex] 
0 & 0 & 0 & 0 &  \Dirac_A^q & \Dirac_A^p   \\[1ex] 
0 & 0 & 0 & 0 & \Dirac_A^p & \Dirac_A^q \\[1ex] 
0 & 0 & \Dirac_A^q & \Dirac_A^p   & 0 & 0 \\[1ex] 
0 & 0 &  \Dirac_A^p &  \Dirac_A^q & 0 & 0 \\[1ex] 
\end{bmatrix}.
\end{eqnarray*}
The spectral dimension of this triple is the same as the spectral dimension of $(\Alg, H_A, \Dirac_A)$. Moreover, if $\Dirac_A^p$ is invertible and the spectral triple $(\Alg,\Hil_A,\Dirac_A)$ satisfies the Rieffel metric condition then so does the spectral triple on $E$.
\end{proposition}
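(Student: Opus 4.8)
The plan is to derive this Proposition as the special case $B = \C$ of Theorems \ref{ConstExt} and \ref{ConstExt2}, equipping $\C$ with the one-point spectral triple $(\C, \C, 0)$, i.e. $H_B = \C$, $\pi_B = \mathrm{id}$ and $\Dirac_B = 0$. Under this choice the stable ideal $\mathcal{K} \otimes B$ becomes $\mathcal{K}$, the extension (\ref{extension}) becomes the extension by compacts (\ref{ExtCompact}), and $H_A \otimes H_B \cong H_A$. First I would record that $(\C, \C, 0)$ genuinely satisfies Definition \ref{deftriple}: the representation is faithful, $[\Dirac_B, \pi_B(b)] = 0$ is bounded, and $\pi_B(b)(I + \Dirac_B^2)^{-1} = b$ is of rank one, hence compact. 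With $\Dirac_B = 0$ the auxiliary operators collapse to $\Dirac_3 = 0$, $\Dirac_1 = \Dirac_A \oplus (-\Dirac_A)$ and $\Dirac_2 \pm i\Dirac_3 = \Dirac_2$, so the Dirac operator of Theorem \ref{ConstExt}, namely $\Dirac = \Dirac_1 \oplus \begin{bmatrix} 0 & \Dirac_2 \\ \Dirac_2 & 0 \end{bmatrix}$, is precisely the displayed $6 \times 6$ matrix acting on $H_A \otimes \C^6$, while $\Pi = \Pi_1 \oplus \Pi_2 \oplus \Pi_2 = \pi_{\sigma} \oplus \pi_{\sigma} \oplus \pi \oplus \pi_{\sigma} \oplus \pi \oplus \pi_{\sigma}$ is the displayed representation.

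Next I would check that the hypotheses of Theorem \ref{ConstExt} reduce exactly to those assumed here. Since $\pi_B(B) = \C$, the Toeplitz-type requirement $E \cong \mathcal{K}(PH_A) \otimes \pi_B(B) + P\pi_A(A)P \otimes \C I_B$ reads $E \cong \mathcal{K}(PH_A) + PAP$; the conditions $[P, \pi_A(a)] \in \mathcal{K}(H_A)$ and $PAP \cap \mathcal{K}(PH_A) = \{0\}$ are the stated hypotheses; and the smoothness requirement that $(\Alg, H_A, \Dirac_A, P)$ be of Toeplitz type (that $P$ commute with $\Dirac_A$ and $[P, \pi_A(a)] \in \mathcal{C}(H_A)$) is assumed. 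Theorem \ref{ConstExt} then gives that $(\mathcal{E}, H, \Dirac)$ is a spectral triple on $E$. For the dimension, the one-point triple is $p$-summable for every $p > 0$ because $\Tr(I + \Dirac_B^2)^{-p/2} = \dim(\C) = 1 < \infty$, so $s_0(\Balg, H_B, \Dirac_B) = 0$; the summability identity of Theorem \ref{ConstExt} then yields $s_0(\mathcal{E}, H, \Dirac) = s_0(\Alg, H_A, \Dirac_A) + 0 = s_0(\Alg, H_A, \Dirac_A)$, as claimed.

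For the metric statement I would invoke Theorem \ref{ConstExt2} (cf. Prop.\ref{metric condition}). Its $P$-injectivity hypothesis $\ker(\Dirac_A^p) \cap PH_A = \{0\}$ is exactly the assumed invertibility of $\Dirac_A^p|_{PH_A}$. It then remains to confirm that the one-point triple satisfies Rieffel's metric condition, which is immediate: its Lipschitz seminorm $L_{\Dirac_B} \equiv 0$ is nondegenerate with kernel $\C I_B$, the quotient $B / \C I_B$ is trivial so $\widetilde{\mathcal{U}}_B = \{0\}$ is bounded, and $\mathcal{U}_{B,1} = \{b \in \C : |b| \leq 1\}$ is the closed unit disc, which is norm-totally bounded. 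Since $(\Alg, H_A, \Dirac_A)$ satisfies Rieffel's condition by assumption, Theorem \ref{ConstExt2} applies and $(E, L_{\Dirac})$ is a spectral metric space.

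There is essentially no genuinely hard step: the substance is the verification that the degenerate data $B = \C$, $\Dirac_B = 0$ feed legitimately into the hypotheses of the two main theorems and that the general Dirac operator and representation collapse to the displayed ones. The only point requiring mild care is that the one-point triple, being built from the zero operator, still qualifies both as a spectral triple and as a compact quantum metric space, and both are checked directly above.
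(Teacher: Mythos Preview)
Your proposal is correct and takes exactly the approach the paper intends: the paper does not give a separate proof but presents this proposition as a ``re-statement of Theorem \ref{ConstExt}'' for the instance $B=\C$ with the one-point triple $(\C,\C,0)$, and you have spelled out precisely that specialisation together with the invocation of Theorem \ref{ConstExt2} for the metric part. The only minor remark is that, carrying out the specialisation literally, the $(4,6)$ and $(6,4)$ entries of the displayed $6\times 6$ matrix should read $-\Dirac_A^q$ (coming from the lower-right entry of $\Dirac_2$); this appears to be a typographical slip in the stated proposition rather than a flaw in your argument.
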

It is worth comparing our spectral triples with those considered by Christensen and Ivan \cite{CI1}. They make the same assumptions that we do, but the difference is that their triple acts on the Hilbert space $P H_A \oplus P H_A \oplus Q H_A$, rather than the  larger Hilbert space $H_A \otimes \C^6$. Their Dirac operator, like ours, is designed to obtain a spectral triple with good metric properties. In the spirit of Rieffel-Gromov-Hausdorff theory, Christensen-Ivan
introduce extra parameters $\alpha, \beta \in (0,1)$ which can be used to study the effects of "recovering" metric data on either the quotient algebra or the compacts itself, coming from the extension.

\subsection{Noncommutative spheres.}
\begin{example}\label{SU2} The quantum group $SU_q(2)$ was introduced by Woronowicz as a 1-parameter deformation of the
ordinary $SU(2)$ group \cite{Wor}. When one considers
the isomorphism $SU(2) \cong S^3$ of topological Lie groups, we can identify its C$^*$-algebra with a 1-parameter deformation of the continuous functions on the 3-sphere, $C(S_q^3)$,  for each $q \in [0,1]$. It can be formally defined as the universal C$^*$-algebra for generators $\alpha$ and $\beta$ subject to the
relations 
\begin{eqnarray*}
 \alpha^* \alpha + \beta^*\beta = I, \;\;\;\; \alpha \alpha^* + q^2\beta \beta^* = I,
\end{eqnarray*} 
\begin{eqnarray*} 
\alpha \beta = q \beta \alpha, \;\;\;\; \alpha \beta^* = q \beta^*\alpha,
\;\;\;\;\beta^*\beta = \beta \beta^*. 
\end{eqnarray*}
Woronowicz shows that the C$^*$-algebras $C(S_q^3)$ are all isomorphic for $q \in [0,1)$. For $q \in (0,1)$,
there is an alternative description of $C(S_q^3)$ as a symplectic foliation (see  \cite{CP1}, \cite{CPP3}, \cite{CPP4}): write $H := \ell_2(\N_0) \otimes
\ell_2(\Z)$ and let $S$ and $T$ be respectively the unilateral shift on $\ell_2(\N_0)$ and the bilateral shift on  $\ell_2(\Z)$, i.e $S e_k := e_{k+1}$ for each $k \geq 0$ and $T e_k := e_{k+1}$ for each $k \in \Z$. Let $N_q \in \mathcal{K}(\ell_2(\N_0))$ be defined by $N_q e_k := q^k e_k$. There exists a representation of $C(S_q^3)$ over $H$ defined by:
\begin{eqnarray*} 
\pi(\alpha) := S^* \sqrt{1 - N_q^2} \otimes I, \;\;\; \pi(\beta) &:=& N_q \otimes T^*.
\end{eqnarray*} 
and this representation is faithful. By considering the map $\sigma: C(S_q^3) \to C(\T)$ sending $\beta$ to $0$ and $\alpha$ to the generator $T^*$ of $C(\T)$, we soon obtain a short exact sequence,
\begin{eqnarray*} 
0 \;\rightarrow \; \mathcal{K} \otimes C(\T) \;\rightarrow \; C(S_q^3) \;\rightarrow \; C(\T)
\;\rightarrow \; 0. 
\end{eqnarray*} 
We obtain an isomorphism, 
\begin{eqnarray*}
C(S_q^3) \cong P C(\T) P \otimes \C I + \mathcal{K}(\ell_2(\N_0)) \otimes C(\T),
\end{eqnarray*}
where $P \in B(\ell_2(\Z))$ is the usual Toeplitz projection, with the property that $[P,x]$ is a
compact operator for each $x \in C(\T)$ and $PxP \otimes I \in C(S_q^3)$ for each $x \in C(\T)$. Note that we can write 
$$
\pi(\alpha) = -P T^* P(1 - \sqrt{1 - N_q^2}) \otimes I + P T^* P \otimes  I \textup{ whilst } \pi(\beta) \in \mathcal{K} \otimes C(\T).
$$ 
Because the algebra $C(S_q^3)$ has the requisite Toeplitz form, the construction in Theorem \ref{ConstExt} defines a spectral triple on $C(S_q^3)$ and it further provides $C(S_q^3)$ with the structure of a spectral metric space. For the latter, a slight perturbation of one of the Dirac operators is needed in this construction to ensure $P$-injectivity. In what follows, $\pi$ denotes the natural non-unital inclusion of $C(S_q^3)$ in $B(\ell_2(\Z) \otimes \ell_2(\Z))$, whilst $\pi_{\sigma}: C(S_q^3) \to B(\ell_2(\Z) \otimes \ell_2(\Z))$ is the map defined on the generators by $\pi_{\sigma}(\alpha) := T^* \otimes 1$, $\pi_{\sigma}(\beta) = 0$.

\begin{theorem}
Let $(\Alg, \ell_2(\Z), M_{\ell})$, $M_{\ell} e_n = n e_n$ be the usual spectral triple on $C^1(\T)$, where $\Alg \subseteq C(\T)$ is any dense $^*$-subalgebra of $C(\T)$ such that $(\Alg, \ell_2(\Z), \Dirac)$ is a triple satisfying 
\begin{eqnarray*}
[M_{\ell}, f] \in B(\ell_2(\Z)), \;\;[|M_{\ell}|, f] \in B(\ell_2(\Z)), \;\;f \in \Alg
\end{eqnarray*}
(e.g. $\Alg = C^1(\T)$).
Then, for each $\lambda \in \R$ and for each $q \in (0,1)$, there is a spectral triple $(\mathcal{E}, (\ell_2(\Z) \otimes \ell_2(\Z)) \otimes \C^6, \Dirac_{\lambda})$ on $C(S_q^3)$, represented via $\pi_{\sigma} \oplus \pi_{\sigma} \oplus \pi \oplus \pi_{\sigma} \oplus \pi \oplus \pi_{\sigma}$
and where 
\begin{eqnarray*}
\Dirac_{\lambda} = 
 \begin{bmatrix} M_{\ell,\lambda} \otimes 1 & 1
\otimes M_{\ell} & 0 \\[1ex] 
1 \otimes M_{\ell} & - M_{\ell,\lambda} \otimes 1 & 0\\[1ex] 
0 & 0  & \Dirac_I \\[1ex]
\end{bmatrix},
\end{eqnarray*}
and
\begin{eqnarray*}
\Dirac_I=
\begin{bmatrix}
0 & 0 & M_{\ell,\lambda}^q \otimes 1 - i \otimes M_{\ell} & M_{\ell,\lambda}^p \otimes 1  \\[1ex] 
0 & 0 & M_{\ell,\lambda}^p \otimes 1  & - M_{\ell,\lambda}^q \otimes 1  - i \otimes M_{\ell} \\[1ex] 
M_{\ell,\lambda}^q \otimes 1 + i \otimes M_{\ell} & M_{\ell,\lambda}^p \otimes 1 &  0 & 0  \\[1ex] 
M_{\ell,\lambda}^p \otimes 1  & - M_{\ell,\lambda}^q \otimes 1  + i \otimes M_{\ell} &  0 & 0 \\[1ex] 
\end{bmatrix}.
\end{eqnarray*}
(Here, $M_{\ell,\lambda} := (M_{\ell} + \lambda I)$.) This spectral triple has spectral dimension 2. Moreover, for each $\lambda > 0$, the spectral triple implements the structure of a quantum metric space on $C(S_q^3)$.
\end{theorem}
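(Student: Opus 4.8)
The plan is to recognise the stated triple as a direct instance of the general construction of Section \ref{const} and then to apply Theorems \ref{ConstExt} and \ref{ConstExt2} essentially verbatim. Concretely, I would take $A = B = C(\T)$, both represented on $\ell_2(\Z)$ by the standard representation $\pi_A = \pi_B$ in which the generator acts as the bilateral shift and $C(\T)$ acts by Fourier multiplication, and I would let $P$ be the Toeplitz projection onto $\overline{\textup{lin}}\{e_n : n \geq 0\}$. Example \ref{SU2} already records that $C(S_q^3)$ is a Toeplitz type extension of $C(\T)$ by $\Kk \otimes C(\T)$ with Toeplitz triple $(\pi_A, \pi_B, P)$ (so in particular $[P, \pi_A(f)] \in \Kk(\ell_2(\Z))$ for every $f \in C(\T)$). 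Setting $\Dirac_A := M_{\ell,\lambda}$ and $\Dirac_B := M_\ell$, and decomposing $\Dirac_A = \Dirac_A^p \oplus \Dirac_A^q$ with $\Dirac_A^p = P M_{\ell,\lambda}$, one checks that the operator $\Dirac_\lambda$ in the statement is exactly the operator $\Dirac = \Dirac_1 \oplus \Dirac_I$ of Theorem \ref{ConstExt}, represented via $\Pi = \Pi_1 \oplus \Pi_2 \oplus \Pi_2$ on $H = \ell_2(\Z) \otimes \ell_2(\Z) \otimes \C^6$. With this dictionary in place, all three assertions reduce to checking the hypotheses of the two main theorems together with an elementary summability computation.

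Next I would verify that $(\Alg, \ell_2(\Z), M_{\ell,\lambda}, P)$ is of Toeplitz type. That $P$ commutes with $M_{\ell,\lambda} = M_\ell + \lambda I$ is immediate, since $M_\ell$ and $P$ are simultaneously diagonal in the basis $(e_n)$. For the regularity condition $[P, \pi_A(f)] \in \Cc(\ell_2(\Z))$ I would use the equivalent form (3) of Proposition \ref{smoothness}: a one-line computation on basis vectors gives $(2P-1)M_\ell = |M_\ell|$, so that
\begin{eqnarray*}
[(2P-1)M_{\ell,\lambda}, \pi_A(f)] = [|M_\ell|, \pi_A(f)] + 2\lambda\,[P, \pi_A(f)],
\end{eqnarray*}
and both terms are bounded --- the first by the standing hypothesis $[|M_\ell|, f] \in B(\ell_2(\Z))$, the second because $P$ and $\pi_A(f)$ are bounded. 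This establishes the Toeplitz type property for every $\lambda \in \R$, and Theorem \ref{ConstExt} then yields the spectral triple $(\mathcal{E}, H, \Dirac_\lambda)$. For the spectral dimension I would note that the circle triples built from $M_\ell$ and $M_{\ell,\lambda}$ are each $1$-summable, since their eigenvalues are $n$ and $n + \lambda$ and $\sum_{n \in \Z}(1 + (n+\lambda)^2)^{-p/2} < \infty$ precisely for $p > 1$; the summability identity of Theorem \ref{ConstExt} then gives spectral dimension $1 + 1 = 2$.

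For the final assertion (the case $\lambda > 0$), I would apply Theorem \ref{ConstExt2}, whose only extra hypothesis over Theorem \ref{ConstExt} is $P$-injectivity. Here $\Dirac_A^p$ restricted to $P\ell_2(\Z)$ acts by $e_n \mapsto (n + \lambda)e_n$ for $n \geq 0$; when $\lambda > 0$ each $n + \lambda$ is strictly positive, so $\ker(\Dirac_A^p) \cap P\ell_2(\Z) = \{0\}$ and $P$-injectivity holds. This is exactly the purpose of the perturbation: for $\lambda = 0$ the vector $e_0$ lies in the kernel of $P M_\ell$, and shifting the eigenvalues by $\lambda$ removes this single zero mode without altering the Lipschitz data, since $[M_{\ell,\lambda}, \pi_A(f)] = [M_\ell, \pi_A(f)]$. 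As the induced Lipschitz seminorm on $C(\T)$ therefore coincides with the one coming from $M_\ell$, and the standard spectral triple on the circle is well known to satisfy Rieffel's metric condition (the associated Lipschitz ball is totally bounded by the Arzel\`a--Ascoli theorem, cf.\ \cite{Ri1}), the hypotheses of Theorem \ref{ConstExt2} hold for both $A$ and $B$, and the conclusion is that $(C(S_q^3), L_{\Dirac_\lambda})$ is a compact quantum metric space.

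The main obstacle, and the only point requiring genuine care rather than bookkeeping, is the verification that the quadruple is of Toeplitz type: one must see that the abstract smoothness requirement $[P, \pi_A(f)] \in \Cc(\ell_2(\Z))$ is precisely captured by the two boundedness hypotheses imposed on $\Alg$ (via condition (3) of Proposition \ref{smoothness} and the identity $(2P-1)M_\ell = |M_\ell|$), and to recognise that the parameter $\lambda$ plays the dual role of preserving the commutator conditions while restoring $P$-injectivity by displacing the zero eigenvalue of $M_\ell$. Once these observations are in place, both main theorems apply directly and the remaining steps are routine.
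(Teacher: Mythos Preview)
Your proposal is correct and follows exactly the approach the paper intends. The paper does not give a separate proof of this theorem; it states it as a direct application of Theorems~\ref{ConstExt} and~\ref{ConstExt2} after observing that $C(S_q^3)$ is a Toeplitz type extension and that a perturbation $M_\ell \mapsto M_{\ell,\lambda}$ is needed to secure $P$-injectivity, and you have filled in precisely the verifications (the identity $(2P-1)M_\ell = |M_\ell|$ linking the hypothesis $[|M_\ell|,f] \in B(\ell_2(\Z))$ to condition~(3) of Proposition~\ref{smoothness}, the summability computation, and the kernel check for $\lambda>0$) that the paper leaves implicit.
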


There are numerous other constructions of spectral triples on the algebra $C(SU_q(2))$ in the literature, mostly with different spectral dimensions and no information about Rieffel's metric condition. The precise relation between those and our construction is unclear. The first spectral triples on  $C(SU_q(2))$ were constructed by Chakraborty and Pal in \cite{CPP1} and \cite{CPP2}, whose focus was very different to ours. The named authors show that any spectral triple on $C(SU_q(2))$ which is of a certain natural form and which is equivariant for the quantum group co-action of $SU_q(2)$ must have spectral dimension at least 3, which is in contrast to our spectral triple of dimension 2. In \cite{CPP3} the same authors construct spectral triples on $C(SU_q(2))$ using an altogether different approach, focusing on those triples which are equivariant for the action of $\T^2$ on $C(SU_q(2))$, which might be closer to our spectral triple. The construction in \cite{CPP2} was used and further developed by Connes \cite{ConnesSU}. A different construction of a $3^+$-summable spectral triple on   $C(SU_q(2))$ was developped in \cite{DiracSU} using the classical Dirac operator. In another paper \cite{localSU} the same authors give a construction of this triple via an extension using the cosphere bundle defined in \cite{ConnesSU} which appears somewhat similar to our construction.

\end{example}

\begin{example}\label{S2q}
The Podle\'s spheres were introduced as a family of \textit{quantum homogeneous spaces} for the action of the
quantum $SU(2)$ group \cite{Pod}. Probably the most widely studied algebraically
non-trivial examples are the so-called
\textit{equatorial Podle\'s spheres}. They can be defined for each $q \in (0,1)$ as the universal C$^*$-algebra,
$C(S_q^2)$, for generators $\alpha$ and $\beta$,  subject to the relations,
\begin{eqnarray*}
\beta^{^*} = \beta, \;\;\beta \alpha  = q \alpha \beta, \;\;\alpha^* \alpha + \beta^2 = I, \;\; q^4 \alpha
\alpha^* + \beta^2 = q^4.
\end{eqnarray*}
Using the same notation as in Example \ref{SU2}, we can write down a representation of $C(S_q^2)$ over $H := \ell_2(\N) \otimes \C^2$ defined by:
\begin{eqnarray*} 
\pi(\alpha) := T \sqrt{1 - N_q^4} \otimes \begin{bmatrix} 1 & 0 \\ 0 & 1 \end{bmatrix}, \;\;\;
\pi(\beta) &:=& N_q^2 \otimes \begin{bmatrix} 1 & 0
\\ 0 & -1 \end{bmatrix}, 
\end{eqnarray*} 
and this representation is faithful. By considering the map $\sigma: C(S_q^2) \to C(\T)$ sending $\beta$ to $0$ and $\alpha$ to $T \in C(\T)$, we soon obtain a short exact sequence,
\begin{eqnarray*}
0 \;\rightarrow \; \mathcal{K} \otimes \C^2 \;\rightarrow \; C(S_q^2) \;\rightarrow \; C(\T)
\;\rightarrow \; 0.
\end{eqnarray*}
We obtain an isomorphism,
\begin{eqnarray*}
C(S_q^2) \cong P C(\T) P \otimes \C I + \mathcal{K}(\ell_2(\N_0)) \otimes \C^2,
\end{eqnarray*}
where $P \in B(\ell_2(\Z))$ is again the usual Toeplitz projection, so that again $[P,x]$ is 
compact for each $x \in \C(\T)$ and now $PxP \otimes 1 \in C(S_q^2)$ for each $x \in C(\T)$. As before we can write 
$$
\pi(\alpha) = -P T P(1 - \sqrt{1 - N_q^4}) \otimes I + P T P \otimes  I, \textup{ and }   \pi(\beta) \in \mathcal{K} \otimes \C^2.
$$ 
As in Example \ref{SU2}, we can formulate the existence of spectral triples for the algebras $C(S_q^2)$ as follows: first, on $B$ we introduce the two-point triple, which turns $B$ into a spectral metric space
\begin{eqnarray*}
\left(\C^2, \C^2, \gamma := \begin{bmatrix} 0 & 1 \\ 1 & 0 \end{bmatrix} \right).
\end{eqnarray*}
Let $\pi$ denote the natural non-unital inclusion of $C(S_q^2)$ in $B(\ell_2(\Z) \otimes \C^2)$, whilst $\pi_{\sigma}: C(S_q^2) \to B(\ell_2(\Z) \otimes \C^2)$ is the map defined on the generators by $\pi_{\sigma}(\alpha) := T \otimes I_2$, $\pi_{\sigma}(\beta) = 0$.
\begin{theorem}\label{Podles}
Let $(\Alg, \ell_2(\Z), M_{\ell})$, $M_{\ell} e_n = n e_n$ be the usual spectral triple on $C^1(\T)$, where $\Alg \subseteq C(\T)$ is any dense $^*$-subalgebra of $C(\T)$ such that $(\Alg, \ell_2(\Z), \Dirac)$ satisfies 
\begin{eqnarray*}
[M_{\ell}, f] \in B(\ell_2(\Z)), \;\;[|M_{\ell}|, f] \in B(\ell_2(\Z)), \;\;f \in \Alg
\end{eqnarray*}
(e.g. $\Alg = C^1(\T)$).
Then, for each $\lambda \in \R$ and for each $q \in (0,1)$, there is a spectral triple $(\mathcal{E}, (\ell_2(\Z) \otimes \C^2) \otimes \C^6, \Dirac_{\lambda})$ on $C(S_q^2)$, represented via $\pi_{\sigma} \oplus \pi_{\sigma} \oplus \pi \oplus \pi_{\sigma}$
and where
\begin{eqnarray*}
\Dirac_{\lambda} = 
 \begin{bmatrix} 
M_{\ell,\lambda} \otimes 1 & 1 \otimes \gamma & 0 \\[1ex] 
1 \otimes \gamma & - M_{\ell,\lambda} \otimes 1 & 0\\[1ex] 
0 & 0 & \Dirac_I \\[1ex]
\end{bmatrix},
\end{eqnarray*}
and
\begin{eqnarray*}
\Dirac_I=
\begin{bmatrix}
0 & 0 & M_{\ell,\lambda}^q \otimes 1 - i \otimes \gamma & M_{\ell,\lambda}^p \otimes 1  \\[1ex] 
0 & 0 & M_{\ell,\lambda}^p \otimes 1  & - M_{\ell,\lambda}^q \otimes 1  - i \otimes \gamma \\[1ex] 
M_{\ell,\lambda}^q \otimes 1 + i \otimes \gamma & M_{\ell,\lambda}^p \otimes 1 &  0 & 0  \\[1ex] 
M_{\ell,\lambda}^p \otimes 1  & - M_{\ell,\lambda}^q \otimes 1  + i \otimes \gamma &  0 & 0 \\[1ex] 
\end{bmatrix}.
\end{eqnarray*}
(Here, $M_{\ell,\lambda} := (M_{\ell} + \lambda I)$.) This spectral triple has spectral dimension 1. Moreover, for each $\lambda > 0$, the spectral triple implements the structure of a quantum metric space on $C(S_q^2)$.
\end{theorem}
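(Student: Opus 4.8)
The plan is to obtain Theorem \ref{Podles} as a direct specialisation of the two main results, Theorem \ref{ConstExt} and Theorem \ref{ConstExt2}, so that the proof reduces to checking their hypotheses for the concrete data at hand. Here the quotient is $A = C(\T)$ carrying the triple $(\Alg, \ell_2(\Z), M_{\ell,\lambda})$ with $M_{\ell,\lambda} = M_\ell + \lambda I$, the ideal is the stabilisation of $B = \C^2$ equipped with the two-point triple $(\C^2, \C^2, \gamma)$, and $P \in B(\ell_2(\Z))$ is the Toeplitz projection onto $\overline{\textup{lin}}\{e_n : n \geq 0\}$. The structural identity $C(S_q^2) \cong P C(\T) P \otimes \C I + \mathcal{K}(\ell_2(\N_0)) \otimes \C^2$, together with the compactness of $[P, f]$ for $f \in C(\T)$ and the triviality of the intersection, is already recorded above and exhibits $(\pi_A, \pi_B, P)$ as a Toeplitz triple in the sense of Definition \ref{Toeplitz type}.

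First I would verify that $(\Alg, \ell_2(\Z), M_{\ell,\lambda}, P)$ is of Toeplitz type. Since $P$ and $M_{\ell,\lambda}$ are simultaneously diagonal in the basis $(e_n)$ they commute, so it remains to establish $P$-regularity, which I would check via condition (3) of Proposition \ref{smoothness}. A short computation gives $(2P-1)M_{\ell,\lambda} = |M_\ell| + \lambda(2P-1)$, whence
\begin{eqnarray*}
[(2P-1)M_{\ell,\lambda}, \pi_A(f)] = [|M_\ell|, \pi_A(f)] + 2\lambda[P, \pi_A(f)].
\end{eqnarray*}
The first term is bounded by the standing hypothesis on $\Alg$, and the second is bounded (indeed compact) because $P$ is the Toeplitz projection; hence the commutator is bounded for every $f \in \Alg$ and every $\lambda \in \R$. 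Theorem \ref{ConstExt} then applies with $\Dirac_A = M_{\ell,\lambda}$ and $\Dirac_B = \gamma$, and its representation $\Pi$ and operator $\Dirac = \Dirac_1 \oplus \Dirac_I$ specialise to precisely the $\Dirac_\lambda$ and $\Dirac_I$ displayed in the statement.

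The spectral dimension then follows from the additivity formula $s_0(\mathcal{E}, H, \Dirac_\lambda) = s_0(A) + s_0(B)$ of Theorem \ref{ConstExt}. Here $s_0(B) = 0$, since $(I + \gamma^2)^{-p/2} = 2^{-p/2} I$ is trace-class on the finite-dimensional space $\C^2$ for every $p > 0$, while $s_0(A) = 1$ because $\textup{Tr}(I + M_{\ell,\lambda}^2)^{-p/2} = \sum_{n\in\Z}(1 + (n+\lambda)^2)^{-p/2}$ is finite precisely when $p > 1$. This yields spectral dimension $1$, independently of $\lambda$.

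For the metric statement I would invoke Theorem \ref{ConstExt2}, which additionally requires $P$-injectivity and Rieffel's metric condition on both $A$ and $B$. The operator $\Dirac_A^p$ acts as multiplication by $n + \lambda$ on $\{e_n : n \geq 0\}$, so for $\lambda > 0$ these eigenvalues are bounded away from $0$ and $\ker(\Dirac_A^p) \cap P\ell_2(\Z) = \{0\}$. The two-point triple on $\C^2$ satisfies the metric condition trivially since $\C^2 / \C I$ is finite-dimensional, and the circle triple $(\Alg, \ell_2(\Z), M_{\ell,\lambda})$ is Rieffel's classical compact quantum metric space, with $L_{\Dirac_A}(f) = \|[M_{\ell,\lambda}, f]\| = \|[M_\ell, f]\|$ unaffected by the shift; hence Theorem \ref{ConstExt2} gives that $(E, L_{\Dirac_\lambda})$ is a spectral metric space. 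The only genuinely delicate point, and the hinge of the argument, is the dual role of the parameter $\lambda$: it must be chosen so that $\Dirac_A^p$ becomes invertible on $P\ell_2(\Z)$ (removing the kernel at $n = 0$), yet the perturbation must neither alter the Lipschitz seminorm on $C(\T)$ nor spoil the smoothness commutators. The identities $[M_{\ell,\lambda}, f] = [M_\ell, f]$ and $(2P-1)M_{\ell,\lambda} = |M_\ell| + \lambda(2P-1)$ show that any $\lambda > 0$ simultaneously achieves both, so no estimates beyond those already packaged in Theorems \ref{ConstExt} and \ref{ConstExt2} are required.
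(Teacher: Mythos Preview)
Your proposal is correct and follows exactly the approach the paper intends: the theorem is stated without proof in the examples section, and the surrounding discussion (together with the parallel Example~\ref{SU2}) makes clear it is meant to be read as an immediate specialisation of Theorems~\ref{ConstExt} and~\ref{ConstExt2} once the Toeplitz-type, $P$-regularity, and $P$-injectivity hypotheses are verified for the circle triple and the two-point triple. Your verification of these hypotheses---in particular the identity $(2P-1)M_{\ell,\lambda} = |M_\ell| + \lambda(2P-1)$ linking $P$-regularity to the assumed boundedness of $[|M_\ell|,f]$, and the observation that the shift by $\lambda>0$ secures $P$-injectivity without altering the Lipschitz seminorm---is exactly the content the paper leaves to the reader.
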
  
A spectral triple on  $C(S_q^2)$ of dimension 2 has been constructed previously in \cite{DDLW}, again with no information about the metric condition. Also here the connection to our construction is unclear and left to future research. The relation seems even less clear than in the previous Example \ref{SU2} since the construction in \cite{DDLW} does not use any extensions.
\end{example}

The noncommutative $n$-spheres for higher dimensions can be defined inductively on $n$. The spheres of odd
dimension arise as short exact sequences of the form
\begin{eqnarray*} 
0 \rightarrow \mathcal{K} \otimes C(S^1) \rightarrow C(S_q^{2n + 1}) \rightarrow C(S_q^{2n - 1})
\rightarrow 0, \;\;n \geq 1. 
\end{eqnarray*} 
and the spheres of even dimension as short exact sequences of the form 
\begin{eqnarray*} 
0 \rightarrow \mathcal{K}
\otimes \C^2 \rightarrow C(S_q^{2n}) \rightarrow
C(S_q^{2n - 1}) \rightarrow 0, \;\;n \geq 1. 
\end{eqnarray*}
We suspect that the same process that was used to construct spectral metric spaces on $C(S_q^2)$ and $C(S_q^3)$ can, via this procedure, lead to the construction of spectral metric spaces for 1-parameter quantum spheres of any integer dimension. We can then relate these to similar constructions in the literature, e.g. \cite{CPP3}, \cite{CPP4}.

\section{Outlook.}\label{out}

In addition to the questions mentioned at the end of  Example \ref{SU2} and \ref{S2q}
we briefly raise a number of questions related to this article which seem interesting. 

\medskip

\noindent
1. The spectral triple we construct on the extension (\ref{extension}) behaves well with respect to summability and induces metrics on the state space. This was our main goal. However, the following question still remains:

\begin{question}
What is the KK-theoretical meaning of the spectral triple  we construct on the extension?
\end{question}

\medskip

\noindent
2. Our construction of spectral triples is restricted to a special class of extensions (Toeplitz type extensions) but is applicable to several concrete examples as demonstrated in the last section. As discussed, there are similarities between a general semisplit extension by a stable ideal the Toeplitz type extensions we consider.  

\begin{question}
Can the construction of the spectral triple in Thm.\ref{ConstExt} be generalised to extensions which are not necessarily of Toeplitz type or only of Toeplitz type in a generalised sense?
\end{question}

\medskip

\noindent
3. Rieffel proposed a notion of distance between compact quantum metric spaces, modelled on the Gromov-Hausdorff
distance (\cite{Ri1, Ri2, Ri3}). It has since been used in a
number of questions relating to C$^*$-algebras endowed with seminorms. Some of the results are quite surprising:
Rieffel (\cite{RI4}) shows how the common observation in  quantum physics that `matrices converge to the 2-sphere' can be
illustrated quite well using Rieffel-Gromov-Hausdorff
convergence.

There are various perspectives that we could take with respect to convergence for extensions in this chapter, especially for algebras arising as $q$-deformations.
One is to try to mimic the convergence studied by Christensen and Ivan in their approach. They construct
a two-parameter family of spectral
triples $(\mathcal{T}, H_A, \Dirac_{\alpha,\beta})$ for extensions of the form 
\begin{eqnarray*} 0 \rightarrow
\mathcal{K} \rightarrow \mathcal{T} \rightarrow A
\rightarrow 0, \end{eqnarray*} and for $\alpha, \beta > 0$, for which the quantum metric spaces converge to those on $A$ and $\mathcal{K}$ as $\alpha \to 0$ and $\beta \to 0$. However,  for example in the case of the Podle\'s spheres this turns out  not to be sufficient to study the Gromov-Hausdorff convergence aspects of varying the parameter $q$. The following two questions seem interesting, though we point out that the situation addressed in those questions is quite different from the matrix algebra convergence in \cite{RI4} since the parameter $q$ does not change the algebras of the Podles spheres. We remark that classically $q$ can be regarded as a label for Poisson structures on $S^2$ (\cite{Roy}).

\begin{question}
Suppose that $(q_n)_{n \in \N} \subseteq (0,1)$ is a sequence converging to $q \in (0,1)$ and let
$(\Alg(C(S_{q_n}^2)),L)$ be one of the compact quantum metrics on
the Podle\'s sphere $C(S_{q_n}^2)$ for $n \in \N$ as defined in Thm.\ref{Podles}. Is it true that
$(\Alg(C(S_{q_n}^2)),L)$ converges to $(\Alg(C(S_{q}^2)),L)$ for
Rieffel-Gromov-Hausdorff convergence? \end{question}

\begin{question}
Suppose now that $(q_n)_{n \in \N} \subseteq
(0,1)$ converges to $1$. Let $(C^1(S^2),
L_{\Dirac})$ be the usual Lipschitz seminorm on the algebra $C(S^2) \cong C(S_1^2)$ for which the restriction of the metric to $S^2$ is the geodesic metric. Is it true that $(\Alg(C(S_{q_n}^2)),L)$ converges to $(C^1(S^2)),L)$, or any equivalent Lipschitz pair on the
two-sphere, for Rieffel-Gromov-Hausdorff convergence?

\end{question}

\bibliographystyle{plain}

\bibliography{biblio}

\end{document}